\newtheorem{thm}{Theorem}[section]
\newtheorem{prop}[thm]{Proposition}
\newtheorem{lem}[thm]{Lemma}
\newtheorem{cor}[thm]{Corollary}
\newtheorem{conj}[thm]{Conjecture}
\renewcommand{\theclaim}{\kern-3pt}
\newtheorem{IntroThm}{Theorem}
\newtheorem{IntroCor}[IntroThm]{Corollary}
\theoremstyle{definition}
\newtheorem{Def}[thm]{Definition}
\theoremstyle{remark}
\newtheorem{rem}[thm]{Remark}
\newtheorem{IntroRems}{Remarks}
\numberwithin{equation}{section}
\newcommand{\sA}{{\mathcal A}}
\newcommand{\sB}{{\mathcal B}}
\newcommand{\sC}{{\mathcal C}}
\newcommand{\sD}{{\mathcal D}}
\newcommand{\sE}{{\mathcal E}}
\newcommand{\sF}{{\mathcal F}}
\newcommand{\sH}{{\mathcal H}}
\newcommand{\sL}{{\mathcal L}}
\newcommand{\sM}{{\mathcal M}}
\newcommand{\sO}{{\mathcal O}}
\newcommand{\sR}{{\mathcal R}}
\newcommand{\sS}{{\mathcal S}}
\newcommand{\sT}{{\mathcal T}}
\newcommand{\sX}{{\mathcal X}}
\newcommand{\sY}{{\mathcal Y}}
\newcommand{\A}{{\mathbb A}}
\newcommand{\C}{{\mathbb C}}
\newcommand{\G}{{\mathbb G}}
\newcommand{\N}{{\mathbb N}}
\renewcommand{\P}{{\mathbb P}}
\newcommand{\Q}{{\mathbb Q}}
\newcommand{\mS}{{\mathbb S}}
\newcommand{\Z}{{\mathbb Z}}
\renewcommand{\phi}{\varphi}
\renewcommand{\1}{{\mathbf{1}}}
\newcommand{\an}{{\rm an}}
\newcommand{\red}{{\rm red}}
\newcommand{\Hom}{{\rm Hom}}
\newcommand{\Ext}{{\rm Ext}}
\newcommand{\Spec}{\operatorname{Spec}}
\newcommand{\0}{\emptyset}
\newcommand{\sHom}{{\mathcal{H}{om}}}
\newcommand{\id}{{\operatorname{id}}}
\newcommand{\Sch}{{\operatorname{\mathbf{Sch}}}}
\newcommand{\Top}{{\mathbf{Top}}}
\newcommand{\op}{{\text{\rm op}}}
\newcommand{\<}{\mathopen<}
\renewcommand{\>}{\mathclose>}
\newcommand{\del}{\partial}
\newcommand{\Spt}{{\mathbf{Spt}}}
\newcommand{\Spc}{{\mathbf{Spc}}}
\newcommand{\Sm}{{\mathbf{Sm}}}
\newcommand{\hocolim}{\mathop{{\rm hocolim}}}
\renewcommand{\lim}{\operatornamewithlimits{\varprojlim}}
\newcommand{\colim}{\operatornamewithlimits{\varinjlim}}
\newcommand{\Ho}{{\mathbf{Ho}}}
\newcommand{\sq}{\square}
 \newcommand{\Ab}{{\mathbf{Ab}}}
\newcommand{\Tot}{{\operatorname{\rm Tot}}}
\newcommand{\Sym}{{\operatorname{\rm Sym}}}
\newcommand{\fin}{{\operatorname{\rm fin}}}
\newcommand{\SH}{{\operatorname{\sS\sH}}}
\newcommand{\eff}{{\mathop{eff}}}
\newcommand{\DM}{{DM}}
\newcommand{\GW}{\operatorname{GW}}
\newcommand{\ds}{{/\kern-3pt/}}
\newcommand{\Cor}{{\mathop{\rm{Cor}}}}
\newcommand{\Deg}{{\mathop{\rm{deg}}}}
\newcommand{\EM}{{{EM}_{\A^1}}}
\newcommand{\gm}{{\mathop{gm}}}
\newcommand{\Mod}{{\operatorname{Mod}}}
\renewcommand{\:}{\kern-1.5pt:\kern-1.5pt}
\newcommand{\tr}{{tr}}
\newcommand{\gr}{{\text{Gr}}}
\renewcommand{\Re}{{\mathop{Re}}}
\newcommand{\ChSS}{{\mathop{\bf{ChSS}}}}
\newcommand{\MGL}{{\mathop{MGL}}}
\renewcommand{\L}{\mathbb{L}}
\newcommand{\ind}{\text{ind}}
\newcommand{\pt}{pt}
\begin{document}

\title{A comparison of motivic and classical stable homotopy theories}
\author{Marc Levine}
\address{Universit\"at Duisburg-Essen\\
Fakult\"at Mathematik, Campus Essen\\
45117 Essen\\
Germany}
\email{marc.levine@uni-due.de}
\thanks{Research supported by the Alexander von Humboldt Foundation}

\keywords{Morel-Voevodsky stable homotopy category, slice filtration}

\subjclass[2000]{Primary 14C25, 19E15; Secondary 19E08 14F42, 55P42}
 
\renewcommand{\abstractname}{Abstract}
\begin{abstract}  
Let $k$ be an algebraically closed field of characteristic zero. Let  $c:\SH\to\SH(k)$ be the functor induced by sending a space to the constant presheaf of spaces on $\Sm/k$. We show that $c$ is fully faithful. In consequence,  $c$ induces an isomorphism
\[
c_*:\pi_n(E)\to \Pi_{n,0}(c(E))(k) 
\]
for all spectra $E$ and all $n\in\Z$.

Fix an embedding $\sigma:k\to \C$ and let $\Re_B:\SH(k)\to \SH$ be the associated Betti realization.  We show that the slice tower for the motivic sphere spectrum over $k$, $\mS_k$ has Betti realization which is strongly convergent. This gives a spectral sequence ``of  motivic origin" converging to the homotopy groups of the sphere spectrum  $\mS\in \SH$; this spectral sequence at $E_2$ agrees with the $E_2$ terms in the Adams-Novikov spectral sequence after a reindexing. Finally, we show that, for $\sE$ a torsion object in $\SH(k)^\eff$, the Betti realization induces an isomorphism $\Pi_{n,0}(\sE)(k)
\to \pi_n(\Re_B\sE)$ for all $n$, generalizing the Suslin-Voevodsky theorem comparing mod $N$ Suslin homology and mod $N$ singular homology.
 \end{abstract}
\date{\today}
\maketitle
\tableofcontents

\section*{Introduction}  
Our main object in this paper is to use Voevodsky's slice tower \cite{VoevSlice}   and its Betti realization to prove two comparison results relating the classical stable homotopy category $\SH$ and the motivic version $\SH(k)$, for $k$ an algebraically closed field of characteristic zero.

For $\sE\in \SH(k)$, we have the bi-graded homotopy sheaf $\Pi_{a,b}\sE$, which is the Nisnevich sheaf on $\Sm/k$ associated to the presheaf
\[
U\mapsto [\Sigma^a_{S^1}\Sigma^b_{\G_m}\Sigma^\infty_TU_+,\sE]_{\SH(k)}
\]
(note the perhaps non-standard indexing).

Our first result concerns the exact symmetric monoidal functor
\[
c:\SH\to \SH(k).
\]
The functor $c$ is derived from the constant presheaf functor from pointed spaces   to presheaves of pointed spaces over $\Sm/k$. It is not hard to show that $c$ is faithful for $k$ an arbitrary characteristic zero field\footnote{If $k$ admits an embedding in $\C$, the corresponding Betti realization gives a left splitting to $c$. In general, one may use a limit argument, relying on  \cite[proposition A.1.2]{AyoubRigid}.}. We will improve this by showing

\begin{IntroThm}\label{IntroThm:Main} Let $k$ be an algebraically closed field of characteristic zero. Then the constant presheaf functor
$c:\SH\to \SH(k)$
is fully faithful.
\end{IntroThm}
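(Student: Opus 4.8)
The plan is to reduce the statement to a computation of the motivic stable homotopy groups of the sphere spectrum and then to transport known classical computations across the Betti realization. The key point is that $c$ is a left adjoint (its right adjoint is ``global sections'' $\sE \mapsto \sE(k)$, or more precisely the derived mapping spectrum $\Hom_{\SH(k)}(\mS_k, -)$ evaluated at $k$), so full faithfulness of $c$ is equivalent to the counit-type statement that for every $E \in \SH$ the natural map
\[
E \longrightarrow \Hom_{\SH(k)}(\mS_k, c(E))
\]
is an equivalence. Since $\SH$ is generated under homotopy colimits and desuspensions by the sphere spectrum, and $c$ is exact and symmetric monoidal (so $c(\mS) = \mS_k$), it suffices to check this on $E = \mS$, i.e.\ to show that the unit map induces an isomorphism $\pi_n(\mS) \to \Pi_{n,0}(\mS_k)(k)$ for all $n \in \Z$; this is exactly the displayed consequence $c_*$ in the abstract, so the theorem and that isomorphism are really the same assertion.

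The main step, then, is to compute $\Pi_{n,0}(\mS_k)(k)$. Here I would use Voevodsky's slice tower for $\mS_k$. Fixing an embedding $\sigma: k \to \C$, apply the Betti realization $\Re_B: \SH(k) \to \SH$; by the second main result quoted in the abstract, the realized slice tower is strongly convergent, so its associated spectral sequence computes $\pi_*(\Re_B \mS_k) = \pi_*(\mS)$, and by the identification of the $E_2$-page with the Adams--Novikov $E_2$-page this spectral sequence ``of motivic origin'' is, up to reindexing, the Adams--Novikov spectral sequence. On the other hand, the slices $s_q(\mS_k)$ are known (Levine, Voevodsky) to be given by motivic Eilenberg--MacLane spectra built from the $E_2$-page of the classical Adams--Novikov spectral sequence, and one can run the \emph{same} spectral sequence to compute $\Pi_{n,0}(\mS_k)(k)$ directly, i.e.\ before realization. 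Comparing these two spectral sequences via $\Re_B$ — using that over an algebraically closed field of characteristic zero the realization induces an isomorphism on the relevant homotopy groups of the slices (this is where the Suslin--Voevodsky-type comparison for torsion effective spectra, and its integral refinement via the explicit form of the slices, enters) — forces $\Pi_{n,0}(\mS_k)(k) \cong \pi_n(\mS)$ compatibly with the unit map, and the theorem follows.

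More carefully, the argument has the shape: (1) set up the adjunction $(c, \Gamma)$ and reduce full faithfulness to $\mS \to \Gamma c(\mS)$ being an equivalence, noting both sides are connective-in-a-suitable-sense and that the tower arguments converge; (2) identify $\Gamma c(\mS)$ with the homotopy limit of $\Gamma$ applied to the slice tower of $\mS_k$, and compute the slices and their global sections; (3) show the resulting spectral sequence agrees, via $\Re_B$, with the one converging to $\pi_*(\mS)$, and deduce the isomorphism on abutments; (4) check the comparison map is the unit, hence conclude. The delicate convergence bookkeeping — controlling the homotopy limit $\lim$ of the global-sections tower and matching filtrations on both sides so that an $E_2$-isomorphism upgrades to an abutment isomorphism — can be handled because the slices are effective and the tower is, after realization, strongly convergent.

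The hard part will be step (3): proving that the comparison map between the motivic spectral sequence for $\Pi_{*,0}(\mS_k)(k)$ and the classical (Adams--Novikov, reindexed) spectral sequence for $\pi_*(\mS)$ is an isomorphism already at $E_1$ or $E_2$, i.e.\ that Betti realization does not lose information on the slices of the sphere spectrum and their $(n,0)$-homotopy over $k$. This rests on understanding $\Re_B$ on motivic Eilenberg--MacLane spectra and on the effective slice category, and it is precisely the place where algebraic closure and characteristic zero are used (via rigidity / the Suslin--Voevodsky comparison and its integral enhancement for the specific slices occurring). Everything else is formal adjunction and spectral-sequence convergence.
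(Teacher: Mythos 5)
Your proposal follows essentially the same strategy as the paper: reduce full faithfulness by a density/localizing-subcategory argument to the assertion that the unit induces isomorphisms $\pi_n(\mS)\to\Pi_{n,0}(\mS_k)(k)$, then establish that isomorphism by comparing the slice-tower spectral sequence for $\Pi_{*,0}(\mS_k)(k)$ with its Betti realization (which computes $\pi_*\mS$), using strong convergence on both sides and a Suslin--Voevodsky comparison on the slices. That is exactly the shape of the paper's argument (lemma~\ref{lem:FFReduction}, the slice tower, theorem~\ref{thm:Connected}, proposition~\ref{prop:SV}).

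There is, however, one genuine gap: you begin by ``fixing an embedding $\sigma:k\to\C$.'' An arbitrary algebraically closed field of characteristic zero need not embed into $\C$ (the cardinality can be too large), so this step does not cover the stated generality. The paper handles this with a separate limit argument (lemma~\ref{lem:FFReduction2}): one writes $k$ as a filtered union of algebraically closed subfields $L_\alpha$ of finite transcendence degree over $\Q$, each of which does embed in $\C$, and invokes a continuity result of Ayoub for $\Hom_{\SH(-)}$ along such filtered unions to deduce full faithfulness over $k$ from full faithfulness over all the $L_\alpha$. Without some such reduction your proof establishes the theorem only for $k\subset\C$.

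A smaller, expository point: you invoke the full identification of the slices $s_q\mS_k$ with motivic Eilenberg--MacLane spectra on the Adams--Novikov $E_2$-page (i.e., Voevodsky's conjecture, which in the paper rests on the Hopkins--Morel theorem). For the main theorem this is more than is needed and would add an unnecessary dependence: the paper's proof of proposition~\ref{prop:SV} uses only Voevodsky's computation $s_0\mS_k\cong M\Z$ together with the fact that $s_q\mS_k$ is a torsion effective motive for $q>0$ (from \cite{LevineConv}), which is enough to apply the Suslin--Voevodsky comparison to all positive slices and a direct Dold--Thom computation to the zeroth. The AN identification of the slices is reserved for the later discussion (theorem~\ref{IntroThm:Main3}) and is not an input to theorem~\ref{IntroThm:Main}.
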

As a special case, theorem~\ref{IntroThm:Main} implies

\begin{IntroCor} \label{IntroCor:Main}  Let $k$ be an algebraically closed field of characteristic zero. Let $\mS_k$ be the motivic sphere spectrum in $\SH(k)$ and $\mS$ the classical sphere spectrum in $\SH$. Then the constant presheaf functor induces an isomorphism
\[
c: \pi_n(\mS)\to \Pi_{n,0}\mS_k(k)
\]
for all $n\in\Z$.
\end{IntroCor}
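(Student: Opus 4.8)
The plan is to obtain Corollary~\ref{IntroCor:Main} as a purely formal consequence of Theorem~\ref{IntroThm:Main}; essentially all of the content lies in the theorem, and what remains is an unwinding of definitions.

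First I would identify the target group. By definition $\Pi_{n,0}\mS_k$ is the Nisnevich sheaf on $\Sm/k$ associated to the presheaf $U\mapsto[\Sigma^n_{S^1}\Sigma^\infty_TU_+,\mS_k]_{\SH(k)}$. Since $k$ is algebraically closed, every Nisnevich (indeed every \'etale) cover of $\Spec k$ splits, so the only covering sieve on $\Spec k$ is the maximal one; hence Nisnevich sheafification does not change the group of sections over $\Spec k$. Using $\Sigma^\infty_T(\Spec k)_+=\mS_k$, this gives
\[
\Pi_{n,0}\mS_k(k)=[\Sigma^n_{S^1}\Sigma^\infty_T(\Spec k)_+,\mS_k]_{\SH(k)}=[\Sigma^n_{S^1}\mS_k,\mS_k]_{\SH(k)}.
\]

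Next I would use that $c$ is exact and symmetric monoidal. Being monoidal it carries the unit $\mS$ to the unit $\mS_k$; being built from the constant presheaf functor it takes the simplicial circle to the simplicial circle, and being exact it commutes with $S^1$-suspension and, in the triangulated setting, with $S^1$-desuspension. Hence $c(\Sigma^n_{S^1}\mS)=\Sigma^n_{S^1}\mS_k$ for every $n\in\Z$. Applying Theorem~\ref{IntroThm:Main} to the pair of objects $\Sigma^n_{S^1}\mS$ and $\mS$ of $\SH$ then yields an isomorphism
\[
\pi_n(\mS)=[\Sigma^n_{S^1}\mS,\mS]_{\SH}\;\iso\;[c(\Sigma^n_{S^1}\mS),c(\mS)]_{\SH(k)}=[\Sigma^n_{S^1}\mS_k,\mS_k]_{\SH(k)},
\]
which, combined with the previous display, is the asserted isomorphism.

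Finally I would check that this composite is exactly the map $c_*$ of the statement. By construction $c_*$ is the map $f\mapsto c(f)$ read through the same two identifications of hom-groups used above, so once the $S^1$-suspension indexing in the definition of $\Pi_{n,0}$ is matched, this compatibility is immediate. The only points requiring (minor) care are thus the harmlessness of sheafification over $\Spec k$ and the bookkeeping of grading conventions; there is no genuine obstacle here, as Theorem~\ref{IntroThm:Main} does all the work.
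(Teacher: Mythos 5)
Your deduction is formally valid as far as it goes, but it is circular relative to the paper's own logical development and therefore does not constitute a proof of the corollary. The paper's introduction is explicit that ``the corollary implies the theorem, by a density argument (see lemma~\ref{lem:FFReduction})'': Theorem~\ref{IntroThm:Main} is established \emph{from} Corollary~\ref{IntroCor:Main}, not the other way around. Concretely, the paper's proof of the corollary (for $k$ admitting an embedding $\sigma:k\hookrightarrow\C$) is the unnamed theorem at the end of \S\ref{sec:Proof}: one shows $\Re_*:\Pi_{n,0}(\mS_k)(k)\to\pi_n(\mS)$ is an isomorphism by comparing the strongly convergent slice spectral sequence for $\mS_k$ (convergence from \cite{LevineConv}) with the strongly convergent spectral sequence obtained by Betti realization of the slice tower (convergence from theorem~\ref{thm:Connected}), using proposition~\ref{prop:SV} --- which rests on the Suslin--Voevodsky theorem and the identification $s_0\mS_k\cong M\Z$ --- to identify the $E_2$-terms. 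Since $\Re_*\circ c_*=\id$, $c_*$ is then also an isomorphism. Lemma~\ref{lem:FFReduction} upgrades this to full faithfulness, and lemma~\ref{lem:FFReduction2} handles $k$ not embeddable in $\C$. Your proposal invokes the conclusion of that entire chain to reprove its first link.

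Your local verifications are fine and are indeed used tacitly in the paper: sheaf sections over $\Spec k$ agree with presheaf sections (this holds for any field, not just algebraically closed ones, since every Nisnevich cover of the spectrum of a field splits), $c(\mS)\cong\mS_k$, and exactness of $c$ gives $c(\Sigma^n\mS)\cong\Sigma^n_{S^1}\mS_k$. But ``Corollary is a special case of the Theorem'' is precisely the one-sentence remark the paper makes before turning to the real work; the substance --- the two-variable Postnikov tower, the connectivity estimates, the comparison of the two spectral sequences, and the Suslin--Voevodsky input on the slices --- is entirely missing from your argument.
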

In fact, the corollary implies the theorem, by a density argument (see lemma~\ref{lem:FFReduction}).

\begin{IntroRems} 1. As pointed out by the referee, the functor $c$ is induced by a (left) Quillen functor between  model categories (see the proof of lemma~\ref{lem:FFReduction}), so we do achieve a comparison of ``homotopy theories", as stated in the title, rather than just the underlying homotopy categories.
\\
2. The functor $c$ is {\em not} full in general. In fact, for a perfect field $k$, Morel  \cite[lemma 3.10, corollary 6.43]{MorelA1} has constructed an isomorphism of $\Pi_{0,0}\mS_k(k)$ with the Grothendieck-Witt group $\GW(k)$ of  symmetric bilinear forms over $k$. As long as not every element of $k$ is a square, the augmentation ideal in $\GW(k)$ is non-zero, hence $c:\pi_0(\mS)\to \Pi_{0,0}\mS_k(k)$ is not surjective. Of course, if $k$ is algebraically closed, then $\GW(k)=\Z$ by rank, and thus $c:\pi_0(\mS)\to \Pi_{0,0}\mS_k(k)$ is an isomorphism. This observation can be viewed as the starting point for our main result.
\end{IntroRems}

We have as well a homotopy analog of the theorem of Suslin-Voevodsky comparing Suslin homology and singular homology with mod $N$ coefficients \cite[theorem 8.3]{SuslinVoev}:

\begin{IntroThm}\label{IntroThm:Main2} 
 Let $k$ be an algebraically closed field of characteristic zero with an embedding $\sigma:k\hookrightarrow \C$. Then for all $X\in \Sm/k$, all  $N>1$ and $n\in\Z$,  the Betti realization associated to $\sigma$ induces an isomorphism
\[
\Pi_{n,0}(\Sigma^\infty_TX_+;\Z/N)(k)\cong \pi_n(\Sigma^\infty X^\an_+;\Z/N).
\]
\end{IntroThm}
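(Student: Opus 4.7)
My plan is to deduce the theorem from the more general statement advertised in the abstract: for any $N$-torsion $\sE \in \SH(k)^\eff$, the Betti realization induces an isomorphism $\Pi_{n,0}(\sE)(k) \iso \pi_n(\Re_B \sE)$ for all $n$. Indeed, setting $\sE := \Sigma^\infty_T X_+ \wedge \mS_k/N$, which is effective (as $\Sigma^\infty_T X_+$ is) and $N$-torsion by construction, and using that $\Re_B$ is exact symmetric monoidal with $\Re_B(\Sigma^\infty_T X_+) \simeq \Sigma^\infty X^\an_+$ and $\Re_B(\mS_k/N) \simeq \mS/N$, the stated isomorphism follows at once.

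To prove the general statement, I would exploit Voevodsky's slice tower $\cdots \to f_{q+1}\sE \to f_q\sE \to \cdots$. Since $\sE$ is effective, $f_0\sE = \sE$, and one obtains a slice spectral sequence whose $E_1$-terms are built from $\Pi_{*,0}(s_q\sE)(k)$ and which abuts to $\Pi_{n,0}(\sE)(k)$; the $N$-torsion hypothesis together with standard connectivity properties of $f_q\sE$ should guarantee strong convergence. Applying $\Re_B$ to the slice tower yields a tower in $\SH$ whose associated spectral sequence I would show converges strongly to $\pi_n(\Re_B \sE)$; establishing this mirrors the strong convergence of the Betti realization of the slice tower of $\mS_k$ that is a main theme of the paper, with the $N$-torsion hypothesis supplying the uniform finiteness needed to bypass the pathologies that would obstruct convergence in the integral case.

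The term-by-term comparison rests on the Voevodsky-Levine identification of each slice $s_q\sE$ of an effective spectrum as an $\HZ$-module, in particular (after mod-$N$ reduction) as an $\HZ/N$-module, corresponding under Voevodsky's equivalence to a complex of motives $M_q$ with mod-$N$ coefficients. Under this correspondence, $\Pi_{n,0}(s_q\sE)(k)$ becomes the mod-$N$ Suslin homology of $M_q$, while $\pi_n(\Re_B s_q\sE)$ becomes its mod-$N$ singular homology, and the Suslin-Voevodsky theorem \cite[theorem 8.3]{SuslinVoev} supplies precisely the isomorphism between them at each $(q,n)$. A comparison of strongly convergent spectral sequences then yields the claim on abutments.

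The main obstacle I anticipate is proving strong convergence of the Betti realization of the slice tower for an \emph{arbitrary} torsion effective $\sE$: one needs uniform connectivity estimates for $\Re_B f_q\sE$ as $q \to \infty$ that go beyond the case of $\mS_k$ handled earlier in the paper. A secondary, more technical subtlety is verifying that $\Re_B$ is compatible with the Voevodsky-Levine motivic Eilenberg-MacLane identification, so that the Betti realization of a slice does indeed agree with the classical singular Eilenberg-MacLane spectrum on the corresponding motive with $\Z/N$ coefficients; without this compatibility, the invocation of Suslin-Voevodsky on each $E_1$-term is not justified.
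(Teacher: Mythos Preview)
Your proposal is correct and matches the paper's approach almost exactly: the paper deduces the theorem as a corollary of the general statement for $\sE\in\SH^\eff(k)_{tor}$ (theorem~\ref{thm:Embedding}), applied to $\sE=\Sigma^\infty_TX_+/N$, and proves that general statement by comparing the slice spectral sequence with its Betti realization, using Pelaez's identification of slices as $M\Z$-modules and the Suslin--Voevodsky theorem (packaged as corollary~\ref{cor:SV}) on the $E_2$-terms.

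Two small points where your anticipated obstacles are already handled in the paper. First, the connectivity estimate for $\Re_B^\sigma(f^t_q\sE)$ is not special to $\mS_k$: theorem~\ref{thm:Connected} shows that if $\sE$ is topologically $N-1$-connected then $\Re_B^\sigma(f^t_q\sE)$ is $q+N-1$-connected for \emph{any} $\sE$, so the Betti-side convergence needs no new argument. Second, the motivic-side strong convergence (from \cite{LevineConv}) and the topological connectedness of $\sE$ both require $\sE$ to lie in $\SH_{\fin}(k)$; the paper secures this by a localizing-subcategory reduction to the compact generators $\Sigma^a_{S^1}\Sigma^b_{\G_m}\Sigma^\infty_TX_+/N$, a step your outline omits but which is routine. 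Your secondary worry about compatibility of $\Re_B$ with the Eilenberg--MacLane identification is exactly what proposition~\ref{prop:RealEM} and corollary~\ref{cor:SV} establish.
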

See corollary~\ref{cor:SV} for a more general statement.

The idea for the proof of theorem~\ref{IntroThm:Main}  is as follows:  As mentioned above, we reduce by a density argument to proving corollary~\ref{IntroCor:Main}; a limit argument reduces us to the case of an algebraically closed field  admitting an embedding into $\C$.  We consider Voevodsky's {\em slice tower} for the sphere spectrum
\[
\ldots\to f_{n+1}\mS_k\to f_n\mS_k\to\ldots\to f_0\mS_k=\mS_k
\]
and its Betti realization. Let $s_n\mS_k$ be the $n$th layer in this tower. This gives us a spectral sequence starting with $\Pi_{*,0}s_n\mS_k(k)$, which should converge to  $\Pi_{*,0}\mS_k(k)$. Similarly, we have a spectral sequence starting with $\pi_*(\Re_B^\sigma(s_n\mS_k))$, which should converge to $\pi_*\mS$ (since $\Re_B^\sigma(\mS_k)=\mS$). By a theorem of Pelaez \cite{Pelaez}, the layers $s_n\mS_k$ are effective motives. Some computations found in our paper \cite{LevineConv} show that $s_n\mS_k$ is in fact a torsion effective motive for $n>0$. On the other hand, Voevodsky \cite{VoevS0} has computed the 0th layer $s_0\mS_k$, and shows that this is the motivic Eilenberg-MacLane spectrum $M\Z$. The theorem of Suslin-Voevodsky {\it loc.\,cit.}\! shows that the Betti realization associated to an embedding $k\hookrightarrow\C$ gives an isomorphism from  the Suslin homology of a  torsion effective motive to the singular homology of its Betti realization; one handles the 0th slice by a direct computation. 

To complete the argument, it suffices to show that the two spectral sequences are strongly convergent. The strong convergence of the motivic version was settled in \cite{LevineConv}, so the main task in this paper is to show that the Betti realization of the slice tower also yields a strongly convergent spectral sequence. 

We accomplish this by introducing a second truncation variable into the story, namely we consider a motivic version of the classical Postnikov tower, filtering by ``topological connectivity". Our results along this line can be viewed as a refinement of Morel's construction of the homotopy $t$-structure on $\SH(k)$ \cite{MorelLec}. In fact, Morel's $\A^1$-connectedness theorem shows  that $\Pi_{a,b}\mS_k=0$ for $a<0$, $b\in \Z$. Our extension of this is our result that this same connectedness in the topological variable $a$ passes to all the terms $f_n\mS_k$ in the slice tower (this is of course a general phenomenon, not restricted to the sphere spectrum, see proposition~\ref{prop:SliceConn}(1)). 

In order to translate this connectedness in the homotopy sheaves into connectedness in the Betti realization, we adapt the method employed by Pelaez in \cite{Pelaez}, using the technique of right Bousfield localization. Using this approach, we are able to show that the $f_n\mS_k$ are built out of objects of the form $\Sigma^a_{S^1}\Sigma^b_{\G_m}\Sigma^\infty_T X_+$ with $b\ge n$ and $a\ge0$ (and $X\in\Sm/k$). As both $\G_m$ and $S^1$ realize to $S^1$, this shows that $f_n\mS_k$ has Betti realization which is $n-1$ connected. 

The proof of theorem~\ref{IntroThm:Main2} runs along the same lines as that of theorem~\ref{IntroThm:Main}, except that we start from the beginning with a torsion object, so we omit the {\it ad hoc} computation of the 0th layer that occurs in the proof of theorem~\ref{IntroThm:Main}.

We conclude  the paper with a closer look at the layers in the slice tower for $\mS_k$.  Voevodsky has given a conjectural formula for these, generalizing his computation of $s_0\mS_k$. The conjecture gives a connection of the layer $s_q\mS_k$ with the complex of homotopy groups (in degree $2q$) arising from the Adams-Novikov spectral sequence. Relying on a result of Hopkins-Morel (see  the preprint of M. Hoyois \cite{Hoyois}), we give a sketch of the proof of Voevodsky's conjecture. 

Via our main result, the Betti  realization of the slice tower for $\mS_k$ gives a tower converging to $\mS$ in $\SH$.  Voevodsky's conjecture shows that the associated spectral sequence converging to the homotopy groups of $\mS$ has $E_2$-term closely related to the $E_2$-terms in the Adams-Novikov spectral sequence.  Our results and Voevodsky's conjecture lead to  the following:

\begin{IntroThm}\label{IntroThm:Main3} Let $k$ be an algebraically closed field of characteristic zero. Let $E_2^{p,2q}(AN)$ be the $E^{p,2q}_2$ term in the Adams-Novikov spectral sequence, i.e., 
\[
E_2^{p,2q}(AN)=\Ext^{p, -2q}_{MU_*(MU)}(MU_*, MU_*),
\]
and let $E_2^{p,q}(AH)$ be the $E_2^{p,q}$ term in the ``Atiyah-Hirzebruch" spectral sequence for $\Pi_{*,0}\mS_k(k)$ associated to the slice tower for $\mS_k$, i.e.,
\[
E_2^{p,q}(AH)=\Pi_{-p-q,0}(s_{-q}\mS_k)(k)\Longrightarrow\Pi_{-p-q,0}\mS_k(k)=\pi_{-p-q}(\mS),
\]
Then
\[
E_2^{p,q}(AH)=E_2^{p-q,2q}(AN)\otimes\hat{\Z}(q),
\]
where $\hat{\Z}(q)=\lim_N\mu_N^{\otimes q}$.
\end{IntroThm}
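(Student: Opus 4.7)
My plan is to substitute Voevodsky's formula for the slices $s_n\mS_k$ into the left-hand side of the asserted identity and compute. As sketched in the final section via the Hopkins--Morel presentation of $M\Z$ and \cite{Hoyois}, Voevodsky's conjecture identifies the $n$th slice $s_n\mS_k$ for $n \ge 0$ in $\SH(k)^\eff$ with the motivic spectrum
$$\Sigma^{2n, n}_T M\Z \otimes_\Z N_n^\bullet,$$
where $N_n^\bullet$ is the degree-$2n$ piece of the reduced cobar complex of the Hopf algebroid $(MU_*, MU_*MU)$; by construction $H^s(N_n^\bullet)$ is the Adams--Novikov $E_2^{s, 2n}$ in the theorem's conventions.

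Next I would apply $\Pi_{-p-q, 0}(-)(k)$ with $n = -q \ge 0$. Since each $N_n^s = \pi_{2n}(MU^{\wedge(s+1)})$ is a finitely generated free abelian group, a standard hypercohomology spectral sequence converges to $\Pi_{-p + n, 0}(s_n\mS_k)(k)$ with
$$E_2^{s, a} = \Pi_{a, 0}(\Sigma^{2n, n}_T M\Z)(k) \otimes_\Z H^s(N_n^\bullet) = H^{2n - a, n}(k, \Z) \otimes E_2^{s, 2n}(AN),$$
using the identification $\Pi_{a, 0}(\Sigma^{2n, n}_T M\Z)(k) = H^{2n - a, n}(k, \Z)$. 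Specializing to total degree $a - s = -p + n$, the entry at cosimplicial level $s$ becomes $H^{n + p - s, n}(k, \Z) \otimes E_2^{s, 2n}(AN)$. For $n > 0$ the slice $s_n\mS_k$ is a torsion effective motive (\cite{LevineConv}), so one may compute with $\Z/N$-coefficients and then pass to the inverse limit. Beilinson--Lichtenbaum (Voevodsky) together with triviality of étale cohomology of a point for $k$ algebraically closed give $H^{p', n}(k, \Z/N) = \mu_N^{\otimes n}$ for $p' = 0$ and $0$ for $0 < p' < n$; the top row $K^M_n(k)/N$ vanishes because Milnor $K$-theory of an algebraically closed field of characteristic zero is divisible. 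Hence only the column $s = p + n$ contributes, and passing to the inverse limit collapses the spectral sequence to
$$\Pi_{-p + n, 0}(s_n\mS_k)(k) = E_2^{p + n, 2n}(AN) \otimes \hat{\Z}(n),$$
which with $n = -q$ is precisely the theorem's formula. The case $q = 0$ is handled by Voevodsky's computation $s_0 \mS_k = M\Z$ (\cite{VoevS0}): both sides reduce to $\Z$ concentrated at $p = 0$, with the convention $\hat{\Z}(0) = \Z$.

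\textbf{Main obstacle.} The principal technical step is justifying the collapse of the hypercohomology spectral sequence and tracking the single $\hat{\Z}(n)$-factor through it. This rests on three ingredients: the torsion nature of $s_n\mS_k$ for $n > 0$ (from \cite{LevineConv}, which reduces the integral computation to a $\Z/N$-coefficient one), Beilinson--Lichtenbaum together with triviality of étale cohomology of $\Spec k$ (which kills $H^{p', n}(k, \Z/N)$ for $0 < p' < n$), and divisibility of Milnor $K$-theory of $k$ (which kills the $s = p$ top-row contribution). A subsidiary point is matching sign conventions between Voevodsky's slice formula and the theorem's reindexed Adams--Novikov $E_2$ term; this is carried out alongside the verification of Voevodsky's conjecture via \cite{Hoyois} in the paper's final section.
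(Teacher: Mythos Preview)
Your proposal is essentially correct and follows the same overall line as the paper's proof: feed Voevodsky's formula for $s_n\mS_k$ (proved in \S\ref{sec:VoevConj} via Hopkins--Morel/Hoyois) into a hypercohomology-type spectral sequence, and use that motivic cohomology of $\Spec k$ with finite coefficients is concentrated in degree~$0$ when $k$ is algebraically closed. The identification of the surviving group with $E_2^{p+n,2n}(AN)\otimes\hat{\Z}(n)$ is the same in both arguments.

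There are, however, two places where the paper's route is cleaner and where your write-up would need tightening. First, your displayed formula $E_2^{s,a}=H^{2n-a,n}(k,\Z)\otimes H^s(N_n^\bullet)$ is not literally correct: filtering by the complex $N_n^\bullet$ gives an $E_1$ that is a tensor product (since the $N_n^s$ are free), but passing to $E_2$ introduces a $\mathrm{Tor}$ term via the universal coefficient sequence, and the groups $H^{p,n}(k,\Z)$ are not in general torsion-free. The paper avoids this by using the \emph{other} hypercohomology spectral sequence, the one whose $E_2^{a,b}$ is $H^a(\Spec k,\sH^b(\pi^\mu_q\mS_k)(q))$. Since corollary~\ref{cor:SliceAN} identifies $\sH^b(\pi^\mu_q\mS_k)$ with $A_{b,q}\otimes\Z^{tr}$ for a \emph{finite} group $A_{b,q}$ (when $q>0$), the coefficients are already finite and the Suslin--Voevodsky computation $H^a(\Spec k,\Z/N(q))=\mu_N^{\otimes q}\cdot\delta_{a,0}$ applies directly, with no need to ``pass to $\Z/N$ and take the limit''. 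Second, your vanishing argument via Beilinson--Lichtenbaum plus divisibility of $K^M_n(k)$ is correct but redundant: Beilinson--Lichtenbaum already covers the range $0\le p\le n$, so the separate Milnor $K$-theory step is unnecessary, and in any case the paper simply cites the Suslin--Voevodsky theorem \cite[theorem 8.3]{SuslinVoev}. Once you switch to the spectral sequence with finite coefficients built in, your argument and the paper's coincide.
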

\noindent 
See theorem~\ref{thm:SliceAN} for the details and proof of this result. 

It would be interesting to see if there were a deeper connection relating the Atiyah-Hirzebruch spectral sequence (for $k=\bar k$ of characteristic zero) and the Adams-Novikov spectral sequence via our theorem~\ref{IntroThm:Main} identifying $\Pi_{-p-q,0}(\mS_k)(k)$ with $\pi_{-p-q}(\mS)$. Although the Betti realization of the slice tower for $\mS_k$ gives a tower converging to $\mS$ in $\SH$ and the associated spectral sequence converging to the homotopy groups of $\mS$ has $E_2$ term the same (up to reindexing) as the $E_2$-terms in the Adams-Novikov spectral sequence,  we do not know if the two spectral sequences continue to be the same. Taking into account the reindexing in comparing the $E_2$-terms,   we raise the question: is $E_r^{p,q}(AH)=E_{2r-1}^{p-q,-2q}(AN)\otimes\hat\Z(q)$ and $d_r^{p,q}(AH)=d_{2r-1}^{p-q,-2q}(AN)\otimes\id$ for all $r\ge2$?

Dugger and Isaksen \cite{DI}  and independently Hu, Kriz and Ormsby \cite{HuKrizOrmsby} have constructed motivic versions of the Adams and  Adams-Novikov spectral sequences, and have made explicit computations. For $k$ algebraically closed,  the work of  \cite{DI}   and  \cite{HuKrizOrmsby} shows that  the Betti realization gives an isomorphism of the 2-completed weight 0 parts of the motivic  Adams, resp. motivic Adams-Novikov, spectral sequence with their topological counterpart.  It would be interesting to see what deeper connections the slice tower for $\mS_k$ has with the motivic Adams or motivic Adams-Novikov spectral sequences, not just for the case of algebraically closed fields. 

As the slice tower has a model based on the filtration by codimension of support on the cosimplicial algebraic simplex $\Delta^*$, such a connection could introduce a new point of view for studying the both the motivic as well as the classical Adams-Novikov spectral sequences. In particular, we find it intriguing  that the Adams-Novikov level of an element in the stable homotopy group of spheres could have a corresponding codimension of support coming from the slice spectral sequence, even though the sphere spectrum itself has no evident algebro-geometric structure. Conversely, the interesting algebraic structure enjoyed by the $E_2$-term of the Adams-Novikov sequence as an Ext group over the co-algebra of co-operations on   $MU$ is not immediately apparent in the layers of the slice tower. 

The paper is organized as follows. The first three sections deal with the construction of the two-variable Postnikov tower and a discussion of its properties. In \S \ref{sec:Cellular} we recall some of the background on cofibrantly generated and cellular model categories. In \S \ref{sec:RBLoc} we discuss some facts about right Bousfield localization and we apply this machinery to give the construction of the two-variable tower in \S \ref{sec:Postnikov2}.  We prove our main connectedness results in \S \ref{sec:Connected}.   We recall some facts about the  Betti realization in \S \ref{sec:Betti}, prove our main theorem on the connectedness of the Betti realization (theorem~\ref{thm:Connected}) and make a few simple computations. We also describe the consequences of the Suslin-Voevodsky theorem for torsion effective motives and their Betti realizations (corollary~\ref{cor:SV}).  

The next  two sections,  \S \ref{sec:Proof}  and \S \ref{sec:SVHpty}, assemble all the pieces to prove theorems~\ref{IntroThm:Main}  and \ref{IntroThm:Main2}. We conclude the body of the paper with a discussion of Voevodsky's conjecture on the slices of the sphere spectrum in \S \ref{sec:VoevConj}. In an appendix, we collect some results on symmetric products that are needed for our study of the Betti realization; although these results closely parallel discussions of symmetric products already in the literature (for example \cite{VoevMotivic}), we found it difficult to derive exactly what we need from these existing treatments. 

I would like to thank Ivan Panin for discussions that encouraged me to look at the possibility of extending the Suslin-Voevodsky theorem to  the Betti realization for $\SH(k)$. I would also like to thank Pablo Pelaez for discussing aspects of Bousfield localization with me and  pointing out that this is an effective way of defining Postnikov towers. Thanks are  also due to Daniel Dugger, Javier Guti\'errez, Shane Kelly,  Oliver R\"ondigs and Markus Spitzweck, as well as to  the referee,  for a number of very helpful comments and suggestions.

\section{Cellular model structures}\label{sec:Cellular} In  section \ref{sec:Postnikov2}, we apply the method used by Pelaez \cite{Pelaez}, in his study of the slice filtration in $\SH(k)$, to define a two-variable Postnikov tower in $\SH(k)$. The method relies on the fact that  motivic model structure on  $\Spt_T(k)$ is cellular, which allows one to take a right Bousfield localization. In this section, we recall the basic facts concerning the cellularity of  $\Spt_T(k)$ and some other auxiliary model categories.

See \cite[definition 11.1.2]{Hirschhorn}  for the definition  of a cofibrantly generated model category and \cite[definition 12.1.1]{Hirschhorn} for that of a cellular model category.  For the complete story, we refer the reader to  \cite{Hirschhorn};  
an earlier version of this paper \cite{LevineArXiv} also contains some additional details omitted here.

The category  of simplicial presheaves on $\Sm/k$, $\Spc(k)$, and the category of pointed simplicial presheaves,  $\Spc_\bullet(k)$, have {\em motivic model structures}; we denote these model categories by $\sM(k)$, $\sM_\bullet(k)$. $\sM(k)$, $\sM_\bullet(k)$ are proper simplicial symmetric monoidal cellular model category with respective homotopy categories  the Morel-Voevodsky   unstable motivic homotopy categories, $\sH(k)$,  $\sH_\bullet(k)$ \cite{MorelVoev} (see e.g., \cite[corollary 1.6]{Hornbostel}, \cite[\S1, theorem 1.1]{Jardine}, \cite[Appendix A]{Jardine2} and \cite[theorem 2.3.2]{Pelaez} for details, including the definition of the generating cofibrations $I_{\sM}$, and  generating trivial cofibrations $J_\sM$).

We pass to the stable setting. Let $T=S^1\wedge\G_m$ and let  $\Spt_T(k)$ be the category of $T$-spectra in 
$\Spc_\bullet(k)$, i.e., objects are sequences $\sE:=(E_0, E_1,\ldots, E_n,\ldots)$, $E_n\in  \Spc_\bullet(k)$,  together with bonding maps $\epsilon_n:E_n\wedge T\to E_{n+1}$. Morphisms are sequences of maps compatible with the bonding. One defines the notion of a {\em stable $\A^1$ weak equivalence}  $f:\sE\to \sF$; see for example \cite[pg. 470]{Jardine2}.

\begin{thm}[\hbox{\cite[theorem 2.5.4]{Pelaez}}]\label{thm:Cellular} There is a cellular model structure, $\sM\sS_T(k)$ on $\Spt_T(k)$ such that the weak equivalences are the stable $\A^1$ weak equivalences. With this model structure, $\sM\sS_T(k)$  is a proper simplicial $\sM_\bullet(k)$ model category.
\end{thm}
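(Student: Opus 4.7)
The plan is to construct $\sM\sS_T(k)$ in two stages: first endow $\Spt_T(k)$ with a projective level model structure transferred from $\sM_\bullet(k)$, then obtain the stable structure via left Bousfield localization at the stabilization maps. Cellularity, left properness, and the simplicial $\sM_\bullet(k)$-module structure for the level structure transfer directly from $\sM_\bullet(k)$ by standard results in \cite{Hirschhorn}; the nontrivial work lies in showing that these properties survive the localization, that the localized weak equivalences coincide with the stable $\A^1$ weak equivalences, and that the resulting model structure is right proper.

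For the level structure, I take generating cofibrations $I_\ell:=\{F_n(i) : i\in I_\sM,\, n\ge 0\}$ and generating trivial cofibrations $J_\ell:=\{F_n(j) : j\in J_\sM,\, n\ge 0\}$, where $F_n:\Spc_\bullet(k)\to\Spt_T(k)$ is left adjoint to evaluation at level $n$. By Hirschhorn's results on cofibrantly generated diagram categories, this produces a cellular proper simplicial $\sM_\bullet(k)$-model category whose weak equivalences and fibrations are defined level-wise. Cellularity requires checking that the domains of $I_\ell$ and $J_\ell$ are compact and that effective monomorphisms are detected level-wise, both of which follow from the cellularity of $\sM_\bullet(k)$ together with the fact that $F_n$ preserves colimits.

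Next I would left Bousfield localize at the set $S=\{F_{n+1}(C\wedge T)\to F_n(C) : C\in\mathrm{dom}(I_\ell)\cup\mathrm{codom}(I_\ell),\, n\ge 0\}$. By \cite[theorem 4.1.1]{Hirschhorn}, the $S$-localized model structure is cellular, left proper, and simplicial, with the same cofibrations as the level structure; the $\sM_\bullet(k)$-module structure descends because $S$ is stable up to weak equivalence under smashing with cofibrant objects of $\sM_\bullet(k)$, so the pushout-product axiom passes from the level structure. I would then identify the $S$-local objects with $\Omega_T$-spectra whose levels are $\A^1$-local, and use this identification to conclude that the $S$-local weak equivalences coincide with the stable $\A^1$ weak equivalences of \cite[pg.~470]{Jardine2}.

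The main obstacle is right properness, since left Bousfield localization preserves left properness but not right properness in general. To establish it, I would construct an explicit stable fibrant replacement functor $\sE\mapsto\sE^{\mathrm{st}}$ by iterating the endofunctor $\sE\mapsto\Omega_T(\sE\wedge S^1)$ together with $\A^1$-local replacement, and then pass to a homotopy colimit. Using that $T$ is a compact cofibrant object, one obtains long exact sequences of the bigraded homotopy sheaves $\Pi_{a,b}$ associated to homotopy pullback squares. Combined with level-wise right properness of $\sM_\bullet(k)$, these sequences imply that the pullback of a stable weak equivalence along a stable fibration is again a stable weak equivalence. Adapting Hovey's compactness argument for symmetric spectra to the motivic setting, where $\A^1$-locality must be tracked alongside $\Omega_T$-stabilization, is the most delicate point in the proof.
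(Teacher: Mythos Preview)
The paper does not give its own proof of this theorem; it is quoted directly from Pelaez \cite[theorem 2.5.4]{Pelaez}, with only a pointer to where the generating (trivial) cofibrations are described. So there is no in-paper proof to compare against.

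Your outline is essentially the standard strategy, and it is the one Pelaez follows (building on Hovey's general stabilization machinery and Jardine's motivic model structures): transfer a projective level structure along the free/evaluation adjunctions, then left Bousfield localize at the adjoint stabilization maps, and finally verify right properness separately. You have correctly identified right properness as the genuine difficulty, since it is not preserved by left Bousfield localization in general.

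Two points deserve more care. First, your identification of the $S$-local equivalences with Jardine's stable $\A^1$ weak equivalences is asserted rather than argued; one needs that the $S$-local fibrant objects are precisely the levelwise-fibrant $\Omega_T$-spectra, and that detection of stable equivalences via bigraded homotopy sheaves agrees with Jardine's definition. Second, your right-properness argument is sketched at a level where the actual content is hidden: the usual route (as in Jardine \cite[\S2]{Jardine2}) is to show that stable fibrations are in particular level fibrations, so that a pullback square along a stable fibration is a level pullback, and then use level right properness together with the characterization of stable equivalences via stabilized homotopy sheaves to conclude. Your proposed iterated $\Omega_T$-stabilization functor is the right tool here, but the ``adapting Hovey's compactness argument'' step should be made precise: one needs that the sequential homotopy colimit defining the stabilization commutes with finite homotopy limits (pullbacks), which uses that $T$ is compact in $\sM_\bullet(k)$. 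With those two points filled in, your argument would be complete.
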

An explicit description of the generating cofibrations and generating trivial cofibrations is given in the statement of \cite[theorem 2.5.4]{Pelaez}.

\begin{rem} \label{rem:ModelStructure} The model structure $\sM\sS_T(k)$ is the one defined by Jardine in \cite[theorem 2.9]{Jardine2}; replacing $T$ with $\P^1$ or $S^1$ gives the {\em motivic model structure} for $\P^1$-spectra $\Spt_{\P^1}(k)$ or $S^1$-spectra, 
$\Spt_{S^1}(k)$.

One may use the  argument for  \cite[theorem 2.5.4]{Pelaez}, replacing $\G_m$ with $S^0$ throughout, to show that motivic model structure on  
$\Spt_{S^1}(k)$ defines a cellular proper simplicial $\sM_\bullet(k)$ model category.
\end{rem}

We recall that a model category $\sM$ is {\em combinatorial} if $\sM$ is cofibrantly generated and locally presentable \cite[definition 2.1]{Dugger}.
 We will have occasion to use functor categories $\sM^\sC$ for $\sM$ a model category and $\sC$ a small category. For this, we recall the following:
\begin{prop} \label{prop:FunctorModelCat} For $f:F\to G$ a morphism in $\sM^\sC$, define $f$ to be a fibration (resp. a weak equivalence) if $F(c)\to G(c)$ is a fibration (resp. a weak equivalence) for all $c\in\sC$; $f$ is a cofibration if it has the LLP with respect to trivial fibrations.  Suppose that  $\sM$ is a cofibrantly generated, resp.  cellular, resp. combinatorial model category.
Then with these cofibrations, fibrations and weak equivalences, $\sM^\sC$ is  model category;  $\sM^\sC$ is cofibrantly generated, resp.,  cellular, resp. combinatorial. If $\sM$ is left, resp. right, proper, the same holds for $\sM^\sC$. 
\end{prop}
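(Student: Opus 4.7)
The plan is to construct what is usually called the \emph{projective} model structure on $\sM^\sC$. For each $c \in \sC$, the evaluation functor $\operatorname{ev}_c : \sM^\sC \to \sM$ admits a left adjoint $F_c$, given explicitly by $F_c(X)(d) = \coprod_{\sC(c,d)} X$. Taking $I_\sM$, $J_\sM$ for the generating (trivial) cofibrations of $\sM$, I would set
\[
I_{\sM^\sC} = \{F_c(i) : c\in\sC,\, i\in I_\sM\}, \qquad J_{\sM^\sC} = \{F_c(j) : c\in\sC,\, j\in J_\sM\},
\]
and apply the recognition theorem \cite[theorem 11.3.1]{Hirschhorn} for cofibrantly generated model categories. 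Smallness of the (co)domains of these sets transfers from $\sM$ because $F_c$ is a left adjoint and colimits in $\sM^\sC$ are computed pointwise; acyclicity of relative $J_{\sM^\sC}$-cell complexes is checked pointwise, where $F_c(j)$ evaluates at each $d$ to a coproduct of copies of the trivial cofibration $j$.

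Once the model structure exists, note that each $F_c(i)$ is pointwise a cofibration, so, by closure of pointwise cofibrations under transfinite compositions, pushouts and retracts, every projective cofibration is a pointwise cofibration. Left and right properness in $\sM^\sC$ then follow from the corresponding property in $\sM$, since pushouts, pullbacks, fibrations, cofibrations and weak equivalences are all detected objectwise. For the combinatorial case, a functor category from a small category into a locally presentable category is again locally presentable, so cofibrant generation together with local presentability yield combinatoriality.

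For cellularity I would verify the three conditions of \cite[definition 12.1.1]{Hirschhorn}: (i) compactness of the (co)domains of $I_{\sM^\sC}$, (ii) smallness of the domains of $J_{\sM^\sC}$ with respect to $I_{\sM^\sC}$-cell complexes, and (iii) that cofibrations in $\sM^\sC$ are effective monomorphisms. The adjunction $F_c \dashv \operatorname{ev}_c$ reduces (i) and (ii) to the corresponding properties of $\sM$: a map out of $F_c(A)$ into an $I_{\sM^\sC}$-cell complex $Y$ corresponds by adjunction to a map $A \to Y(c)$, and $Y(c)$ is built pointwise as an $I_\sM$-cell complex. For (iii), equalizers and kernel pairs in $\sM^\sC$ are formed pointwise, so a projective cofibration---being pointwise a cofibration and hence pointwise an effective monomorphism in $\sM$---is an effective monomorphism in $\sM^\sC$. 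The main obstacle I anticipate is the verification of condition (i), since the notion of compactness in a cellular model category is somewhat delicate; but the description of $I_{\sM^\sC}$-cell complexes as pointwise $I_\sM$-cell complexes makes the reduction to $\sM$ reasonably direct.
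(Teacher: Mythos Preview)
Your proposal is correct and follows the standard route; the paper itself gives no argument at all but simply cites \cite[theorem 11.6.1, proposition 12.1.5]{Hirschhorn} and \cite[theorem 2.14]{Barwick}, and what you have sketched is essentially the content of those references (Hirschhorn's 11.6.1 constructs the projective model structure via exactly your sets $I_{\sM^\sC}$, $J_{\sM^\sC}$ and the recognition theorem, and 12.1.5 verifies the cellularity axioms along the lines you indicate). Your treatment of properness via the observation that projective cofibrations are pointwise cofibrations is also the standard one.
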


\begin{proof} See    \cite[theorem 11.6.1, proposition 12.1.5]{Hirschhorn} and \cite[theorem 2.14]{Barwick}. 
\end{proof}
This model structure is called the {\em projective model structure} on $\sM^\sC$.

\section{Right Bousfield localization}\label{sec:RBLoc} 
We recall the notions of the left and right  Bousfield localization  of a model category from  \cite[\S 3]{Hirschhorn}. The  machinery of cellular model categories is useful for Bousfield localization due to the following theorem of Hirschhorn:

\begin{thm}[\hbox{\cite[theorems 4.1.1 and 5.1.1]{Hirschhorn}}] Let $\sM$ be a cellular model category. \\
1. Suppose $\sM$ is  left proper. Let $\sS$ be a set of maps in $\sM$. Then the left Bousfield localization of $\sM$ with respect to $\sS$,  $L_\sS\sM$, exists. \\
2. Suppose $\sM$ is  right proper. Let $K$ be a set of objects in $\sM$. Then the right Bousfield localization of $\sM$ with respect to the set $\sC(K)$ of  $K$-local maps in $\sM$, $R_{\sC(K)}\sM$, exists.
\end{thm}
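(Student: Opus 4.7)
The plan is to follow the standard Bousfield localization template: keep one side of the model structure (cofibrations for the left localization, fibrations for the right) unchanged, enlarge the class of weak equivalences, and define the opposite class of (co)fibrations via lifting properties. The main substantive work is to verify the factorization axioms for the new structure, which reduces to exhibiting a \emph{set} of generating (trivial) (co)fibrations so that the small object argument applies. Cellularity of $\sM$ is precisely what makes this set-theoretic step possible.

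For part (1), I would first call a fibrant object $Z$ \emph{$\sS$-local} if $\mathrm{Map}(B,Z)\to\mathrm{Map}(A,Z)$ is a weak equivalence of simplicial sets for every $(A\to B)\in\sS$, and call $f:X\to Y$ an \emph{$\sS$-local equivalence} if $\mathrm{Map}(Y,Z)\to\mathrm{Map}(X,Z)$ is a weak equivalence for all $\sS$-local $Z$. The cofibrations of $L_\sS\sM$ are declared to be those of $\sM$, the weak equivalences are the $\sS$-local equivalences, and the fibrations are fixed by RLP with respect to trivial cofibrations. The main construction is to enlarge $\sS$ to a set $\Lambda(\sS)$ by taking horn-like pushout-products of the mapping cylinders of its members with the generating cofibrations in $I_\sM$. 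One then shows that a map has RLP with respect to $J_\sM\cup\Lambda(\sS)$ if and only if it is both a fibration of $\sM$ and an $\sS$-local equivalence. Left properness enters here, via the fact that $\sS$-local equivalences are preserved under pushout along cofibrations, a property needed both for this recognition step and for showing that the $\sS$-local fibrant replacement produced by the small object argument on $J_\sM\cup\Lambda(\sS)$ is well-behaved.

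For part (2), the dual plan is to call an object $W$ \emph{$K$-colocal} if it is cofibrant and lies in the smallest class of cofibrant objects containing $K$ and closed under the relevant homotopy colimits, and to call $f:X\to Y$ a \emph{$K$-colocal equivalence} if $\mathrm{Map}(W,X)\to\mathrm{Map}(W,Y)$ is a weak equivalence for every such $W$. One keeps the fibrations of $\sM$, declares the $K$-colocal equivalences to be the weak equivalences of $R_{\sC(K)}\sM$, and defines cofibrations by LLP against trivial fibrations. The crux is the construction of a set of generating cofibrations for the new structure, obtained by framing the elements of $K$ against the boundary inclusions $\partial\Delta^n\hookrightarrow\Delta^n$ and closing under cells of bounded size; right properness then plays the symmetric role, ensuring that $K$-colocal equivalences are preserved under pullback along fibrations.

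In both parts the principal obstacle is set-theoretic: one must control the cardinality of the objects entering the generating sets so that the small object argument terminates. This is exactly what cellularity delivers, through a regular cardinal $\gamma$ for which the domains and codomains of $I_\sM$ are $\gamma$-compact and subcomplexes of cell complexes are controlled in terms of their cells. Once the appropriate generating sets are in hand, the model category axioms reduce to the standard recognition theorem for cofibrantly generated model categories; the full execution is carried out in \cite[Chapters 4 and 5]{Hirschhorn}.
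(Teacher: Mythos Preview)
The paper does not give its own proof of this statement: it is quoted verbatim from Hirschhorn with the citation \cite[theorems 4.1.1 and 5.1.1]{Hirschhorn} and no argument follows. Your sketch is therefore not being compared against anything in the paper; it is a reasonable high-level outline of Hirschhorn's own proof, and you correctly defer the full execution to \cite[Chapters 4 and 5]{Hirschhorn}.

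One small caution on part (2): while your outline is broadly faithful to Hirschhorn, the right Bousfield localization $R_{\sC(K)}\sM$ is \emph{not} asserted to be cofibrantly generated in \cite{Hirschhorn}; one constructs a set $\overline{\Lambda(K)}$ that permits the small object argument for the cofibration/trivial-fibration factorization, but a generating set for the \emph{trivial} cofibrations is not produced. So your phrase ``the model category axioms reduce to the standard recognition theorem for cofibrantly generated model categories'' overstates what actually happens in the right-localization case: the factorization axioms are verified more directly, not via Kan's recognition theorem. This does not affect the correctness of the statement you are proving, but it is worth keeping straight if you intend to use $R_{\sC(K)}\sM$ later.
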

We sometimes abuse notation and write $R_K\sM$ for $R_{\sC(K)}\sM$; we will also call $R_K\sM$ the right Bousfield localization of $\sM$ with respect to $K$.

Let $K$ be a class of objects in a pointed  model category $\sM$. The definition of a {\em $K$-colocal weak equivalence} and a {\em $K$-colocal} object in  $\sM$ is given in \cite[definitions 3.1.4, 3.1.8]{Hirschhorn}.  

\begin{Def}[\hbox{\cite[definition 5.1.4]{Hirschhorn}}] Let $\sM$ be a model category, $K$ a set of cofibrant objects of $\sM$. The class of {\em $K$-cellular objects} is the smallest class of cofibrant objects of $\sM$ containing $K$ and closed under homotopy colimits and weak equivalences.
\end{Def}

\begin{rem}\label{rem:Hocolim} Suppose that $\sM$ is a stable model category, that is,   $\sM$ is a pointed model category such that the suspension functor on $\Ho\sM$ is an auotequivalence \cite[definition 7.1.1]{Hovey}. By \cite[proposition 7.1.6]{Hovey},  $\Ho\sM$ becomes a triangulated category with translation equal to suspension and the distinguished triangles the mapping cone sequences. Let $K$ be a set of cofibrant objects of $\sM$ containing the base-point. Then the image of the class of $K$-cellular objects in $\Ho\sM$ is the class of objects in the smallest full subcategory $\sC$ of $\Ho\sM$ containing $K$,  closed under arbitrary small coproducts and with the property that, if $A\xrightarrow{f} B\to C\to A[1]$ is a distinguished triangle with $A$ and $B$ in $\sC$, then $C$ is in $\sC$ (we call  such a $C$ a {\em cone} of the morphism $f$).

Indeed, each such distinguished triangle exhibits $C$ as the homotopy colimit of $\pt\leftarrow A\to B$. Conversely, if
$F:I\to \sM$
is a functor from a small category $I$ with $F(\alpha)
\in K$ for all $\alpha\in I$, then $\hocolim_IF$ can be expressed as a colimit of a sequence of cofibrations $C_0\to C_1\to\ldots\to C_n\to\ldots$,
with each map $C_n\to C_{n+1}$ given by a pushout diagram
\[
\xymatrix{
\coprod F(\alpha) \otimes S^n\ar[r] \ar[d]&C_n\ar[d]\\
\coprod F(\alpha) \otimes D^{n+1}\ar[r] &C_{n+1}}
\]
with the coproduct over a suitable index set. Thus in  $\Ho\sM$, we have the distinguished triangle
\[
\oplus  F(\alpha)[n]\to C_n\to C_{n+1}\to \oplus_\alpha F(\alpha)[n+1],
\]
hence $C_n$ is in $K$ for each $n$. This gives the distinguished triangle
\[
\oplus_nC_n\to \oplus_nC_n\to \hocolim_IF\to \oplus_nC_n[1].
\]
\end{rem}

\begin{thm}[\hbox{\cite[theorem 5.1.1, theorem 5.1.5]{Hirschhorn}}]\label{thm:RBousLoc} Let $K$ be a set of objects in a right proper cellular   model category $\sM$, $\sR_K\sM$ the right Bousfield localization. \\
1.  $\sR_K\sM$ is a right proper model category; if $\sM$ is a simplicial model category, then  $\sR_K\sM$ inherits the structure of a simplicial model category from $\sM$.\\
2. The cofibrant objects in $\sR_K\sM$ are the $K$-colocal objects of $\sM$.\\
3. If the objects in $K$ are all cofibrant, then the class of $K$-colocal objects is the same as the class of $K$-cellular objects.
\end{thm}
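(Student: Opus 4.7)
The plan is to follow Hirschhorn's general strategy for constructing right Bousfield localizations of cellular model categories. First I would fix the new classes by decree: a $K$-colocal equivalence is a map $f\:X\to Y$ in $\sM$ such that for every $A\in K$ the induced map of derived simplicial mapping spaces $\operatorname{map}(QA,\hat X)\to\operatorname{map}(QA,\hat Y)$ is a weak equivalence of simplicial sets, where $Q$ denotes cofibrant replacement in $\sM$ and $\hat{(-)}$ a cofibrant--fibrant replacement; the fibrations of $\sR_K\sM$ I would take to be the fibrations of $\sM$; and the cofibrations I would define by the left lifting property against trivial fibrations.

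The main obstacle, and the entire reason the cellular hypothesis is needed, is to produce a \emph{set} $I_{R_K}$ of generating cofibrations with respect to which the small object argument yields functorial (cofibration, trivial fibration) factorizations in the new structure. Cellularity supplies the necessary tools: domains of the generating cofibrations of $\sM$ are small relative to inclusions of subcomplexes, cofibrations are effective monomorphisms, and subcomplexes of a given cell complex form a set up to isomorphism. Starting from cells of the form $QA\wedge(\partial\Delta^n_+\hookrightarrow\Delta^n_+)$ with $A\in K$, one assembles $I_{R_K}$ along these lines. The generating trivial cofibrations remain those of $\sM$, since every trivial fibration of $\sM$ is visibly a $K$-colocal equivalence. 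With both sets in hand, the small object argument delivers the two factorization axioms; the lifting and 2-out-of-3 axioms are routine from the definitions, and right properness of $\sR_K\sM$ follows from right properness of $\sM$ together with the mapping space characterization of $K$-colocal equivalences applied to pullback squares.

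For part (2), an object $X$ is cofibrant in $\sR_K\sM$ precisely when the map from the initial object to $X$ is a $K$-colocal cofibration, and unwinding the definition this coincides with the notion of a $K$-colocal object of $\sM$. The simplicial structure in (1) is inherited from $\sM$: Quillen's SM7 for $\sR_K\sM$ reduces to SM7 in $\sM$ on the fibration side, and on the weak-equivalence side to the fact that the mapping space characterization of $K$-colocal equivalences is compatible with tensoring by weak equivalences of simplicial sets.

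For part (3), one inclusion is formal: $K$ is contained in the class of $K$-colocal objects, and the condition that $\operatorname{map}(QA,-)$ be a weak equivalence is manifestly stable under weak equivalence and under homotopy colimits, so every $K$-cellular object is $K$-colocal. For the reverse inclusion, given a $K$-colocal $X$ I would apply the small object argument with $I_{R_K}$ to the map from the initial object to $X$, producing a factorization $\pt\to\widetilde X\to X$ in which $\widetilde X$ is a transfinite composition of pushouts of coproducts of the generating cells. The description of homotopy colimits in Remark~\ref{rem:Hocolim} then exhibits $\widetilde X$ as $K$-cellular, while the second map is a trivial fibration in $\sR_K\sM$, hence a $K$-colocal equivalence between $K$-colocal objects, and therefore a weak equivalence in $\sM$; closure of the $K$-cellular class under weak equivalence concludes the argument.
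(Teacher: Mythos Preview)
The paper does not supply a proof of this theorem at all: it is quoted verbatim as a result of Hirschhorn (theorems 5.1.1 and 5.1.5 there), with the citation in the theorem heading serving in lieu of proof. Your proposal is therefore not in competition with any argument in the paper; rather, you have written a reasonable outline of Hirschhorn's own proof, which is exactly the source the paper defers to.

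A couple of small points on your sketch. Your justification for keeping the generating trivial cofibrations unchanged is phrased a bit oddly: the real reason is simply that the fibrations of $\sR_K\sM$ are by definition the fibrations of $\sM$, so any set detecting the latter via RLP works for the former. Also, in Hirschhorn's actual argument the set $I_{R_K}$ is not quite built from cells $QA\wedge(\partial\Delta^n_+\hookrightarrow\Delta^n_+)$ directly; one uses an augmented set of horns $\overline{\Lambda(K)}$ together with the original generating cofibrations, and the hard work (the Bousfield--Smith cardinality argument, relying on cellularity) goes into showing that a map has the RLP with respect to this set if and only if it is both a fibration and a $K$-colocal equivalence. Your description captures the spirit but compresses this step. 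None of this affects the correctness of the overall plan, and for the purposes of the present paper the citation is all that is required.
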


We will be using the properties of right Bousfield localization as expressed in the following result, essentially a direct consequence of theorem~\ref{thm:RBousLoc}.

\begin{thm} \label{thm:truncation} Let $K$ be a set of cofibrant objects in a right proper cellular model category $\sM$ and let $\Ho\sM(K)$ be the full subcategory of $\Ho\sM$ with objects the  $K$-cellular objects of $\sM$. Then
\\
1. the inclusion $i:\Ho\sM(K)\to \Ho\sM$ admits a right adjoint $r:\Ho\sM\to \Ho\sM(K)$.\\
2. For an object $X\in\sM$, $i\circ r(X)$ is the image in $\Ho\sM$ of a cofibrant replacement $A\to X$ with respect to the model structure $\sR_K\sM$.\\
3. $r:\Ho\sM\to \Ho\sM(K)$ identifies $\Ho\sM(K)$ with the localization of $\Ho\sM$ with respect to  the $K$-colocal weak equivalences. This localization is canonically equivalent to the functor $q:\Ho\sM\to \Ho\sR_K\sM$ induced by the identity functor $\sM\to \sR_\sM$ on the underlying category of $\sM$ and $\sR_K\sM$.
\end{thm}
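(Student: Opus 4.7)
The plan is to deduce the theorem as a formal consequence of Theorem~\ref{thm:RBousLoc} together with standard facts about right Bousfield localization. First I invoke that theorem to obtain the right Bousfield localization $\sR_K\sM$: a right proper (simplicial) model category sharing the fibrations of $\sM$, with weak equivalences the $K$-colocal weak equivalences, and whose cofibrant objects coincide with the $K$-cellular ones. Let $Q$ denote a functorial cofibrant replacement in $\sR_K\sM$, yielding for each $X$ a natural $K$-colocal weak equivalence $q_X\:QX\to X$ with $QX$ being $K$-cellular. I set $r(X):=[QX]\in \Ho\sM(K)$, with $q_X$ as the candidate counit; this realizes (2) by definition.

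For (1), fix $Y\in\Ho\sM(K)$, represented by a $K$-cellular object. Since $Y$ is $K$-colocal and $q_X$ is a $K$-colocal weak equivalence, the induced map on derived mapping spaces $\mathrm{Map}^h(Y,QX)\to\mathrm{Map}^h(Y,X)$ is a weak equivalence of simplicial sets (\cite[definitions 3.1.4, 3.1.8]{Hirschhorn}); passage to $\pi_0$ then produces a bijection $[Y,QX]_{\Ho\sM}\cong[Y,X]_{\Ho\sM}$, natural in $Y$ and $X$. As $\Ho\sM(K)\hookrightarrow \Ho\sM$ is full, the left-hand side agrees with $\Hom_{\Ho\sM(K)}(Y,r(X))$, giving the required adjunction isomorphism with counit represented by $q_X$.

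For (3), recall that $\Ho\sR_K\sM$ is by construction the universal localization of $\sM$ inverting the $K$-colocal weak equivalences. I first check that $r$ inverts such maps: given a $K$-colocal weak equivalence $f\:X\to X'$, the naturality square for $q$ combined with two-out-of-three shows that $Qf\:QX\to QX'$ is again a $K$-colocal weak equivalence, and since $QX,QX'$ are themselves $K$-colocal, this forces $Qf$ to be an $\sM$-weak equivalence by \cite[theorem 3.2.13]{Hirschhorn}. Hence $r$ factors uniquely as $\bar r\circ q$ for some $\bar r\:\Ho\sR_K\sM\to\Ho\sM(K)$, and $\bar r$ is essentially surjective because each $q_X$ becomes invertible in $\Ho\sR_K\sM$.

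The principal obstacle is proving $\bar r$ is fully faithful, which reduces to the identification $[Y,Y']_{\Ho\sM}=[Y,Y']_{\Ho\sR_K\sM}$ for $K$-cellular $Y,Y'$. The key observation is that the fibrations in $\sM$ and $\sR_K\sM$ coincide, and every $\sM$-weak equivalence is a $K$-colocal weak equivalence, so a fibrant replacement $Y'\to RY'$ in $\sM$ remains a fibrant replacement in $\sR_K\sM$; since $Y$ is cofibrant in both model structures, both hom-sets are computed by $\pi_0\mathrm{Map}(Y,RY')$, completing the proof.
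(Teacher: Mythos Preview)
Your proof is correct and follows essentially the same route as the paper's: both establish the right adjoint $r$ via cofibrant replacement in $\sR_K\sM$ and identify $\Ho\sM(K)$ with $\Ho\sR_K\sM$. The only difference is packaging. The paper observes that the identity functors form a Quillen pair between $\sM$ and $\sR_K\sM$ (the identity $\sM\to\sR_K\sM$ is right Quillen since fibrations coincide and $\sM$-weak equivalences are $K$-colocal), so the derived adjunction $(L\id,q)$ comes for free; it then notes that $L\id$ has image $\Ho\sM(K)$ and that the induced functor $\Ho\sR_K\sM\to\Ho\sM(K)$ is ``easy to see'' is an equivalence. You instead verify the adjunction isomorphism directly from the colocality definition and supply the details the paper omits for the equivalence $\Ho\sR_K\sM\simeq\Ho\sM(K)$, in particular the computation of hom-sets via a common fibrant replacement. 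Your argument is more explicit; the paper's is shorter because it lets the Quillen formalism do the bookkeeping.
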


\begin{proof} By theorem~\ref{thm:RBousLoc}, the right Bousfield localization $\sR_K\sM$ exists. The identity functor $\sM\to \sR_K\sM$ is a right Quillen functor with right derived functor the localization $q:\Ho\sM\to \Ho\sR_K\sM$. The left adjoint to $\id:\sM\to \sR_K\sM$ is of course the identity functor $\sR_K\sM\to \sM$; letting $L\id:\Ho\sR_K\sM\to \Ho\sM$ be its left derived functor, it follows directly from the definition of $L\id$ and  theorem~\ref{thm:RBousLoc} that the  image of $L\id$ is  $\Ho\sM(K)$. It is easy to see that the induced functor $\Ho\sR_K\sM\to \Ho\sM(K)$ is an equivalence, proving (1) and (3); (2) follows from the definition of $L\id$.
\end{proof}

\section{A two-variable Postnikov tower}\label{sec:Postnikov2}  Following a suggestion of P. Pelaez, we refine the construction of Voevodsky's slice filtration to a two-variable version which measures both $S^1$-connectedness and $\G_m$-connectedness.

We consider $\Spt_T(k)$ with its motivic model structure $\sM\sS_T(k)$.  For $n\ge0$, let $F_n:\Spc_\bullet(k)\to \Spt_T(k)$ be the functor with 
\[
F_n(\sX):=(F_n(\sX)_0, F_n(\sX)_1,\ldots, F_n(\sX)_m,\ldots), 
\]
where
\[
F_n(\sX)_m:=\begin{cases} \pt&\text{ if } m<n\\ \Sigma_T^{m-n}\sX&\text{ if }m\ge n\end{cases}
\]
The bonding maps $\epsilon_m$ are the identity if $m\ge n$, the basepoint map if $m<n$. 

For integers $a,b$, let
\[
K_{a,b}:=\{F_n(\Sigma^p_{S^1}\Sigma^q_{\G_m}X_+)\ |\ X\in\Sm/k, p-n\ge a, q-n\ge b\}.
\]
We also allow $a=-\infty$ or $b=-\infty$. This gives us the full subcategories of $\SH(k)=\Ho\Spt_T(k)$
\[
\tau^{a,b}\SH(k):=\Ho\Spt_T(k)(K_{a,b}).
\]

Each object in $K_{a,b}$ is cofibrant. As $F_n(\Sigma^p_{S^1}\Sigma^q_{\G_m}X_+)\cong\Sigma^{p-n}_{S^1}\Sigma^{q-n}_{\G_m}\Sigma_T^\infty X_+$ in $\SH(k)$,  remark~\ref{rem:Hocolim} tells us that $\tau^{a,b}\SH(k)$ is the smallest full subcategory of $\SH(k)$ containing the set of objects $\{\Sigma^p_{S^1}\Sigma^q_{\G_m}\Sigma_T^\infty X_+\ |\ X\in\Sm/k, p\ge a, q\ge b\}$, closed under small coproducts and taking cones of morphisms.  

In addition, $\tau^{-\infty,b}\SH(k)$ is closed under  $\Sigma_{S^1}^n$ for $n\in\Z$, hence  $\tau^{-\infty,b}\SH(k)$ is a localizing subcategory of $\SH(k)$; indeed, $\tau^{-\infty,b}\SH(k)$ is the localizing category generated by the objects $\{\Sigma^q_T\Sigma_T^\infty X_+\ |\ X\in\Sm/k,  q\ge b\}$, which category is used to define Voevodsky's slice tower in $\SH(k)$.

Write $\bar\Z$ for $\Z\cup\{-\infty\}$. Giving $\bar\Z^2$ the partial order $(a,b)\le (a',b')$ iff $a\le a'$ and $b\le b'$, we have
\[
\tau^{a',b'}\SH(k)\subset \tau^{a,b}\SH(k) \text{ if } (a,b)\le (a',b').
\]
Let $i_{a,b}:\tau^{a,b}\SH(k)\to \SH(k)$
be the inclusion.

\begin{thm}\label{thm:PostnikovTower}  For each $(a,b)\in \bar\Z^2$, the inclusion functor $i_{a,b}$ admits a right adjoint $r_{a,b}:\SH(k)\to \tau^{a,b}\SH(k)$. In addition
\\
1. $r_{a,b}$ identifies $\tau^{a,b}\SH(k)$ with the localization of $\SH(k)$ with respect to the $K_{a,b}$-colocal weak equivalences.
\\
2. For $(a,b)\le (a',b')$, the inclusion $i_{a',b'}^{a,b}:\tau^{a',b'}\SH(k)\to \tau^{a,b}\SH(k)$ admits a right adjoint
 $r_{a',b'}^{a,b}:\tau^{a,b}\SH(k)\to \tau^{a',b'}\SH(k)$. There is a canonical isomorphism $r_{a',b'}\cong 
r_{a',b'}^{a,b}\circ r_{a,b}$ and $r_{a',b'}^{a,b}$ identifies $\tau^{a',b'}\SH(k)$ with the localization of $\tau^{a,b}\SH(k)$ with respect to the $K_{a',b'}$-colocal weak equivalences.
\\
3. A morphism $f:X\to Y$ in $\tau^{a,b}\SH(k)$ is an isomorphism if and only if for each $A\in K_{a,b}$ the map $f_*:[A, X]_{\SH(k)}\to [A, Y]_{\SH(k)}$ is an isomorphism.
\end{thm}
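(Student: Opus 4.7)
My plan is to derive all three assertions from the right Bousfield localization package in Theorem~\ref{thm:truncation}. Theorem~\ref{thm:Cellular} provides the required cellular, right proper model structure on $\sM\sS_T(k)$, and each element of $K_{a,b}$ is cofibrant (as noted in the paragraph preceding the statement). Applying Theorem~\ref{thm:truncation} with $\sM=\sM\sS_T(k)$ and $K=K_{a,b}$ then immediately yields the right adjoint $r_{a,b}$ to $i_{a,b}$, together with the identification of $\tau^{a,b}\SH(k)$ both with $\Ho\sR_{K_{a,b}}\sM\sS_T(k)$ and with the localization of $\SH(k)$ at the $K_{a,b}$-colocal weak equivalences. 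This settles the main statement and part (1).

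Next I would prove (3) by a standard density argument inside the triangulated category $\SH(k)$. Given $f:X\to Y$ in $\tau^{a,b}\SH(k)$ with $f_*$ an isomorphism on each $A\in K_{a,b}$, let $\sT\subset\tau^{a,b}\SH(k)$ be the full subcategory of those $A$ for which $f_*:[A,X]\to[A,Y]$ is an isomorphism. The functors $[-,X]$ and $[-,Y]$ convert small coproducts to products and distinguished triangles to long exact sequences, so the five lemma makes $\sT$ closed under small coproducts and cones; combined with $K_{a,b}\subset\sT$ and Remark~\ref{rem:Hocolim} (in the form already invoked in the paragraph defining $\tau^{a,b}\SH(k)$), this forces $\sT=\tau^{a,b}\SH(k)$, so in particular $X,Y\in\sT$. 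Choosing $g\in[Y,X]$ with $f_*(g)=\id_Y$ yields a section of $f$, and from $f_*(gf)=f=f_*(\id_X)$ together with the injectivity of $f_*:[X,X]\to[X,Y]$ I conclude $gf=\id_X$.

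For (2), the definition of $K_{a,b}$ makes the inclusion $K_{a',b'}\subset K_{a,b}$ immediate whenever $(a,b)\le(a',b')$, hence $\tau^{a',b'}\SH(k)\subset\tau^{a,b}\SH(k)$ and $i_{a',b'}=i_{a,b}\circ i_{a',b'}^{a,b}$. The right adjoint candidate is $r_{a',b'}^{a,b}:=r_{a',b'}\circ i_{a,b}$; the adjunction
\[
[i_{a',b'}^{a,b}X,Y]_{\tau^{a,b}}=[i_{a',b'}X,i_{a,b}Y]_{\SH(k)}=[X,r_{a',b'}i_{a,b}Y]_{\tau^{a',b'}}
\]
is a one-line check using fullness of $i_{a,b}$. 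For the identity $r_{a',b'}\cong r_{a',b'}^{a,b}\circ r_{a,b}$, the key observation is that the counit $\varepsilon:i_{a,b}r_{a,b}W\to W$ is $K_{a,b}$-colocal by Theorem~\ref{thm:truncation}(2), hence $K_{a',b'}$-colocal by $K_{a',b'}\subset K_{a,b}$; applying $r_{a',b'}$ and using part (3) for $(a',b')$ to detect isomorphisms in $\tau^{a',b'}$ converts this into the desired isomorphism. The same test shows that a map $f$ in $\tau^{a,b}\SH(k)$ is $K_{a',b'}$-colocal if and only if $r_{a',b'}^{a,b}(f)$ is an isomorphism, so the general fact that a right adjoint to a fully faithful functor realises its target as the localization at the class of maps it inverts identifies $r_{a',b'}^{a,b}$ with the localization of $\tau^{a,b}\SH(k)$ at the $K_{a',b'}$-colocal weak equivalences.

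The main obstacle, though a mild one, is the interplay between the two nested colocalizations in (2): namely the fact that a counit which is colocal for the larger set $K_{a,b}$ is automatically colocal for the smaller set $K_{a',b'}$, and that this is enough to deduce the composition formula at the level of homotopy categories without a second appeal to right Bousfield localization (which would raise the subsidiary question of whether $\sR_{K_{a,b}}\sM\sS_T(k)$ remains cellular). Everything else is bookkeeping around a single application of Theorem~\ref{thm:truncation} and a standard triangulated-category density argument.
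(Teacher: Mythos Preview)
Your proof is correct and covers all three parts, but it diverges from the paper's argument in parts~(2) and~(3).

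For (3), the paper stays at the model-category level: it chooses fibrant--cofibrant replacements $\tilde X,\tilde Y$ and observes that since $K_{a,b}$ is closed under $\Sigma_{S^1}$, the hypothesis gives $\pi_n\sHom(A,\tilde X)\cong\pi_n\sHom(A,\tilde Y)$ for all $n\ge0$ and all $A\in K_{a,b}$, so $\tilde f$ is literally a $K_{a,b}$-colocal weak equivalence. Your argument instead works entirely in the triangulated category $\SH(k)$ via density and Yoneda, which is a legitimate alternative. One caution: your claim that the five lemma makes $\sT$ closed under cones is not quite right as written, since the five lemma applied to $[-,X]\to[-,Y]$ at a cone $C$ of $A\to B$ also needs control over $[A[1],-]$ and $[B[1],-]$. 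The clean fix is to take $\sT$ to consist of those $A$ with $f_*:[A[n],X]\to[A[n],Y]$ an isomorphism for all $n\ge0$; this still contains $K_{a,b}$ (precisely because $K_{a,b}$ is closed under $\Sigma_{S^1}$, the same fact the paper exploits) and is now genuinely closed under cones.

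For (2), you deduce the composition formula $r_{a',b'}\cong r_{a',b'}^{a,b}\circ r_{a,b}$ by showing the counit of $(i_{a,b},r_{a,b})$ is $K_{a',b'}$-colocal and hence inverted by $r_{a',b'}$. The paper is more direct: since $i_{a',b'}=i_{a,b}\circ i_{a',b'}^{a,b}$, both $r_{a',b'}$ and $r_{a',b'}^{a,b}\circ r_{a,b}$ are right adjoint to $i_{a',b'}$, and uniqueness of adjoints finishes immediately. Your route is longer but, as you note, has the virtue of avoiding any appeal to a second right Bousfield localization on $\sR_{K_{a,b}}\sM\sS_T(k)$.
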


\begin{proof} The fact that $i_{a,b}$ admits a right adjoint $r_{a,b}$ and that $r_{a,b}$ identifies $\tau^{a,b}\SH(k)$ with the localization of $\SH(k)$ with respect to the $K_{a,b}$-colocal weak equivalences follows directly from  theorem~\ref{thm:Cellular}, theorem~\ref{thm:RBousLoc} and theorem~\ref{thm:truncation}.   For (2),  as $i_{a,b}$ is fully faithful, it follows that $r_{a',b'}\circ i_{a,b}$ is right adjoint to $i_{a',b'}^{a,b}$. The existence of a canonical isomorphism $r_{a',b'}\cong r_{a',b'}^{a,b}\circ r_{a,b}$ follows directly from the universal property of adjoints. Since $K_{a',b'}\subset K_{a,b}$,  every $K_{a,b}$-colocal weak equivalence is a $K_{a',b'}$-colocal weak equivalence. This together with (1) yields  the last statement in (2).

For (3), the isomorphisms in $\tau^{a,b}\SH(k)$ are given by the $K_{a,b}$-colocal weak equivalences in $\sR_{K_{a,b}}\Spt_T(k)$. Choosing fibrant-cofibrant replacements $\tilde{X}$, $\tilde{Y}$ for $X,Y$, and lifting $f$ to a map $\tilde{f}:\tilde{X}\to \tilde{Y}$, it suffices to show that if 
$f_*:[A, X]_{\SH(k)}\to [A, Y]_{\SH(k)}$
is an isomorphism for all $A\in K_{a,b}$, then $\tilde{f}$ is a $K_{a,b}$-colocal weak equivalence. But if $A$ is in $K_{a,b}$, so is $\Sigma^n_{S^1}A$ for all $n\ge0$. We have
\begin{align*}
\pi_n(\sHom(A,\tilde{X}))&=[A[n], X]_{\SH(k)}\\
&\cong [A[n], Y]_{\SH(k)}\\
&=\pi_n(\sHom(A,\tilde{Y})),
\end{align*}
that is, $f_*:\pi_n(\sHom(A,\tilde{X}))\to \pi_n(\sHom(A,\tilde{Y}))$
is an isomorphism for all $n\ge0$. Thus $f_*:\sHom(A,\tilde{X}))\to \sHom(A,\tilde{Y}))$ is a simplicial weak equivalence for all $A\in K_{a,b}$ and hence $f$ is a  $K_{a,b}$-colocal weak equivalence.
\end{proof}

For $(a,b)\in\bar{Z}^2$, define the endofunctor
\[
f_{a,b}:\SH(k)\to \SH(k)
\]
as the composition $i_{a,b}\circ r_{a,b}$. We write $f^t_n$ for $f_{-\infty, n}$ and $f^s_n$ for $f_{n,-\infty}$. By theorem~\ref{thm:PostnikovTower}(2), we have the lattice of natural transformations
\[
\xymatrixrowsep{2pt}
\xymatrix{
&\vdots\ar[d]&\vdots\ar[d]&&\vdots\ar[d]\\
\ldots\ar[r]&f_{a+1,b+1}\ar[r]\ar[dd]&f_{a,b+1}\ar[dd]\ar[r]&\ldots\ar[r]&f^t_{b+1}\ar[dd]\\\\
\ldots\ar[r]&f_{a+1,b}\ar[r]\ar[dd]&f_{a,b}\ar[dd]\ar[r]&\ldots\ar[r]&f^t_{b}\ar[dd]\\\\
&\vdots\ar[d]&\vdots\ar[d]&&\vdots\ar[d]\\
\ldots\ar[r]&f^s_{a+1}\ar[r]&f^s_a\ar[r]&\ldots\ar[r]&\id}
\]

\begin{rem} In number of papers on Voevodsky's slice tower, one considers the sequence of localizing subcategories
\[
\ldots\subset \tau^{-\infty,b}\SH(k)\subset\tau^{-\infty,b}\SH(k)\subset\ldots\subset\SH(k),
\]
($\tau^{-\infty,b}\SH(k)$ was usually denoted $\Sigma^b_T\SH^\eff(k)$, with $\SH^\eff(k):=\tau^{-\infty,0}\SH(k)$).  The existence of the right adjoint to the inclusion $\Sigma^b_T\SH^\eff(k)\to \SH(k)$ follows from Neeman's Brown representability theorem \cite[theorem 8.3.3]{Neeman}. What we are now writing as $f^t_b$ was usually denoted $f_b$. 

Pelaez \cite{Pelaez} introduced the approach via right Bousfield localization to better understand multiplicative properties of Voevodsky's slice tower.  
\end{rem}

\begin{rem}[The $S^1$ and unstable theory] The above construction goes through with minor changes if we replace $\SH(k)$ with $\SH_{S^1}(k)$. For $n\ge0$, let $F_n^s:\Spc_\bullet(k)\to \Spt_{S^1}(k)$ be the functor
\[
F_n^s(\sX)=(F_n^s(\sX)_0, \ldots, F_n^s(\sX)_m,\ldots)
\]
with $F^s_n(\sX)_m=\pt$ for $m<n$, $ F_n^s(\sX)_m=\Sigma^{m-n}_{S^1}\sX$ for $m\ge n$ and with identity bonding maps.  For $a\in\Z$, $b\ge0$, let $K_{a,b}^s$ be the set of objects $F_n^s(\Sigma^p_{S^1}\Sigma^q_{\G_m}X_+)$, with $p\ge a+n$, $q\ge b$. With its motivic model structure,  $\Spt_{S^1}(k)$  is a cellular proper simplicial model category (see remark~\ref{rem:ModelStructure}) and we can form the right Bousfield localizations $\sR_{K^s_{a,b}}\Spt_{S^1}$. This gives us the subcategories $\tau_s^{a,b}\SH_{S^1}(k)$ of $K^s_{a,b}$-colocal objects, adjoint functors $i^s_{a,b}$, $r^s_{a,b}$  $f^s_{a,b}$, $a\in\Z$, $b\ge0$, with properties exactly analogous to those listed in theorem~\ref{thm:PostnikovTower}. Defining the truncation functors $f^s_{a,b}:=i^s_{a,b}\circ r^s_{a,b}$ gives us the two-variable Postnikov tower in $\SH_{S^1}(k)$.

As  $\Spc_\bullet(k)$ with its motivic model structure is also a cellular proper simplicial model category, the same approach, with 
\[
K_{a,b}^{un}:=\{\Sigma^{a'}_{S^1}\Sigma^{b'}_{\G_m}X_+\ |\ a'\ge a, b'\ge b, X\in \Sm/k\}, a,b\ge0,
\]
defines a two variable Postnikov (really Whitehead) tower in $\sH_\bullet(k)$, again with  properties   analogous to those listed in theorem~\ref{thm:PostnikovTower}.
\end{rem}

\section{Connectedness} \label{sec:Connected}

\begin{Def} Let $\sE\in\SH(k)$. We say that $\sE$ is {\em topologically $N$-connected} if $\Pi_{a,b}(\sE)=0$ for $a\le N$, $b\in\Z$. For $E\in\SH_{S^1}(k)$, we call $E$  topologically $N$-connected if $\Pi_{a,b}(E)=0$ for $a\le N$, $b\ge0$. 
\end{Def}

\begin{rem} An $S^1$-spectrum $E$ is said to be {\em $N$-connected} if the homotopy sheaf $\pi_m E$ is zero for all $m\le N$.  Take $\sE\in \SH(k)$ and let $E_m=\Omega^\infty_{\G_m}(\Sigma^{m}_{\G_m}\sE)\in\SH_{S^1}(k)$. Then $\sE$ is  topologically $N$-connected if and only if $E_m$ is   $N$-connected for all $m\in \Z$. Indeed,  
\[
\pi_a\Omega^\infty_{\G_m}(\Sigma^m_{\G_m}\sE)=\Pi_{a,0}(\Sigma^m_{\G_m}\sE)=\Pi_{a,-m}\sE.
\]
\end{rem}

\begin{lem}\label{lem:SliceIsoPi} Take $\sE\in \SH(k)$ and  $a\ge-\infty$, $b\in\Z$. Then for all $p\ge a$, $q\ge b$, the canonical map $f_{a,b}\sE\to \sE$ induces an isomorphism of sheaves $\Pi_{p,q}f_{a,b}\sE\to \Pi_{p,q}\sE$.
In particular, if $\sE$ is topologically $N$-connected, then for $b\in \Z$,  $\Pi_{p,q}f^t_b\sE=0$ for all $p\le N$, $q\ge b$.
\end{lem}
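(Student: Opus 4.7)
The plan is to read the lemma directly off the adjunction $i_{a,b}\dashv r_{a,b}$ established in theorem~\ref{thm:PostnikovTower} together with the fully faithful property of $i_{a,b}$. The first step is to identify the correct test objects: for $p\ge a$, $q\ge b$ and $U\in\Sm/k$, the object $\Sigma^p_{S^1}\Sigma^q_{\G_m}\Sigma^\infty_T U_+$ is canonically isomorphic in $\SH(k)$ to $F_0(\Sigma^p_{S^1}\Sigma^q_{\G_m}U_+)$, which belongs to $K_{a,b}$ by definition (take $n=0$). In particular it is a $K_{a,b}$-cellular object, i.e., lies in $\tau^{a,b}\SH(k)$.

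Next I would verify that for every $A\in\tau^{a,b}\SH(k)$ and every $\sE\in\SH(k)$ the canonical map $f_{a,b}\sE=i_{a,b}r_{a,b}\sE\to\sE$ (the counit of the adjunction) induces a bijection $[A,f_{a,b}\sE]_{\SH(k)}\xrightarrow{\sim}[A,\sE]_{\SH(k)}$. This is the standard consequence of having a fully faithful left adjoint: writing $A=i_{a,b}A'$, the adjunction gives $[A,\sE]_{\SH(k)}\cong[A',r_{a,b}\sE]_{\tau^{a,b}}$, and fully faithfulness of $i_{a,b}$ identifies the right-hand side with $[i_{a,b}A',i_{a,b}r_{a,b}\sE]_{\SH(k)}=[A,f_{a,b}\sE]_{\SH(k)}$; tracing $\id_A$ through the two bijections shows the composite is exactly post-composition with the counit.

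Specialising $A$ to $\Sigma^p_{S^1}\Sigma^q_{\G_m}\Sigma^\infty_T U_+$ and letting $U$ range over $\Sm/k$ then produces an isomorphism on the presheaves whose Nisnevich sheafifications are $\Pi_{p,q}f_{a,b}\sE$ and $\Pi_{p,q}\sE$, yielding the desired isomorphism of sheaves. The \emph{in particular} clause follows by specialising to $a=-\infty$ (so that every $p\in\Z$ satisfies $p\ge a$ and $f_{a,b}=f^t_b$) and invoking the hypothesis $\Pi_{p,q}\sE=0$ for $p\le N$. I do not anticipate a real obstacle: all of the conceptual input — existence of the right Bousfield localization, the adjunction, and full faithfulness of $i_{a,b}$ — was already assembled in \S\ref{sec:Postnikov2}. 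The only step requiring a small amount of care is checking that the chain of canonical bijections in the second paragraph really is induced by the counit, which is a routine diagram chase using the triangle identity.
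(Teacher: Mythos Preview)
Your proof is correct and follows essentially the same approach as the paper. The paper's argument is slightly terser: it simply notes that $\Sigma^p_{S^1}\Sigma^q_{\G_m}\Sigma_T^\infty U_+$ lies in $\tau^{a,b}\SH(k)$ for $p\ge a$, $q\ge b$, invokes the ``universal property of $f_{a,b}\sE\to\sE$'' to get the isomorphism on Hom-groups, and then sheafifies; you have merely unpacked that universal property explicitly via the adjunction $i_{a,b}\dashv r_{a,b}$ and full faithfulness of $i_{a,b}$.
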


\begin{proof}  For $U\in \Sm/k$, $\Sigma^p_{S^1}\Sigma^q_{\G_m}\Sigma_T^\infty U_+$ is in $\tau^{a, b}\SH(k)$ for all $p\ge a$, $q\ge b$. Thus
\[
[\Sigma^p_{S^1}\Sigma^q_{\G_m}\Sigma_T^\infty U_+, f_{a,b}\sE]_{\SH(k)}\to 
[\Sigma^p_{S^1}\Sigma^q_{\G_m}\Sigma_T^\infty U_+,\sE]_{\SH(k)}
\]
is an isomorphism for all $p\ge a$, $q\ge b$, by the universal property of $f_{a,b}\sE\to \sE$. Taking the Nisnevich sheaves associated to the presheaves
\[
U\mapsto [\Sigma^p_{S^1}\Sigma^q_{\G_m}\Sigma_T^\infty U_+, f_{a,b}\sE]_{\SH(k)};\
U\mapsto [\Sigma^p_{S^1}\Sigma^q_{\G_m}\Sigma_T^\infty U_+,\sE]_{\SH(k)}
\]
shows that $\Pi_{p,q}f_{a,b}\sE\to \Pi_{p,q}\sE$ is an isomorphism for all $p\ge a$, $q\ge b$, as desired.
\end{proof}
 
\begin{lem}\label{lem:Conn0} Let $X$ be in $\Sm/k$. Then for $p\ge a$, and all $q\in\Z$,    $\Sigma^p_{S^1}\Sigma^q_{\G_m}\Sigma_T^\infty X_+$ is topologically $a-1$ connected. 
 \end{lem}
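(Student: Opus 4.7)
The statement to prove is that $\Pi_{a',b'}\!\bigl(\Sigma^p_{S^1}\Sigma^q_{\G_m}\Sigma_T^\infty X_+\bigr)=0$ for $a'\le a-1$, $b'\in\Z$, whenever $p\ge a$ and $q\in\Z$. The plan is to shift the suspensions onto the source of the defining Hom-group, so that the vanishing reduces to Morel's $\A^1$-connectedness theorem applied to $\Sigma_T^\infty X_+$.

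First I would use the fact that $\Sigma_{S^1}$ and $\Sigma_{\G_m}$ are invertible in $\SH(k)$ to obtain, for each $U\in \Sm/k$, a natural isomorphism
\[
[\Sigma^{a'}_{S^1}\Sigma^{b'}_{\G_m}\Sigma_T^\infty U_+,\Sigma^p_{S^1}\Sigma^q_{\G_m}\Sigma_T^\infty X_+]_{\SH(k)}
\cong[\Sigma^{a'-p}_{S^1}\Sigma^{b'-q}_{\G_m}\Sigma_T^\infty U_+,\Sigma_T^\infty X_+]_{\SH(k)}.
\]
Sheafifying in $U$ for the Nisnevich topology gives the identity $\Pi_{a',b'}(\Sigma^p_{S^1}\Sigma^q_{\G_m}\Sigma_T^\infty X_+)\cong \Pi_{a'-p,\,b'-q}(\Sigma_T^\infty X_+)$. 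Under the hypothesis $a'\le a-1$ and $p\ge a$, the first index $a'-p$ is at most $-1$, so it suffices to establish that $\Pi_{m,n}(\Sigma_T^\infty X_+)=0$ for every $m<0$ and every $n\in\Z$.

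The latter is exactly Morel's $\A^1$-connectedness theorem for the $T$-suspension spectrum of a smooth scheme (the same result that was invoked in the introduction for $\mS_k=\Sigma_T^\infty \Spec k_+$, applied here to the slightly more general $X_+$): for any $X\in\Sm/k$, $\Sigma_T^\infty X_+$ lies in the non-negative part of Morel's homotopy $t$-structure on $\SH(k)$, equivalently $\Pi_{m,n}\Sigma_T^\infty X_+=0$ for $m<0$ and all $n\in\Z$. Citing this result completes the argument.

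The only potential obstacle is making sure one is invoking the correct form of Morel's theorem, namely the connectivity statement for the $(\P^1$-equivalently $T$-$)$suspension spectrum, not just the unstable $\A^1$-connectedness for spaces or the statement for $\mS_k$ alone; the reference \cite{MorelA1} (or the formulation in \cite{MorelLec}) supplies this in the form needed. Once that is in hand, no further computation is required: all the work is in the two-line manipulation of bigraded homotopy sheaves above.
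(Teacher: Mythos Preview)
Your proof is correct and follows essentially the same approach as the paper: both use the suspension shift $\Pi_{a',b'}(\Sigma^p_{S^1}\Sigma^q_{\G_m}\Sigma_T^\infty X_+)\cong \Pi_{a'-p,b'-q}(\Sigma_T^\infty X_+)$ to reduce to the topological $-1$-connectedness of $\Sigma_T^\infty X_+$. The only difference is the reference for this last step: the paper cites \cite[proposition 6.9(1)]{LevineConv}, whereas you invoke Morel's connectedness theorem directly, which is equally valid.
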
 
 
 \begin{proof} As $\Pi_{a,b}\Sigma^p_{S^1}\Sigma^q_{\G_m}\Sigma_T^\infty X_+=
 \Pi_{a-p, b-q}\Sigma_T^\infty X_+$, we need only show that  $\Sigma_T^\infty X_+$ is topologically -1 connected; this is \cite[proposition 6.9(1)]{LevineConv}.
  \end{proof}
  
 \begin{lem} \label{lem:Conn1} Take $\sE\in\SH(k)$. Then $f_{a,b}\sE$ is topologically $a-1$ connected. 
 \end{lem}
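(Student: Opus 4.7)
The plan is to show that $f_{a,b}\sE$ lies in $\tau^{a,b}\SH(k)$ and that \emph{every} object of this subcategory is topologically $(a-1)$-connected. By definition $f_{a,b}\sE = i_{a,b}(r_{a,b}(\sE))$, so $f_{a,b}\sE$ is a $K_{a,b}$-cellular object of $\SH(k)$. Combining remark~\ref{rem:Hocolim} with theorem~\ref{thm:RBousLoc}(3), the category $\tau^{a,b}\SH(k)$ is the smallest full subcategory of $\SH(k)$ that contains the generators $\Sigma^p_{S^1}\Sigma^q_{\G_m}\Sigma_T^\infty X_+$ with $p \ge a$, $q \ge b$, $X \in \Sm/k$, is closed under arbitrary small coproducts, and is closed under taking cones of morphisms. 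Thus it is enough to prove that the full subcategory $\sC_{a-1} \subset \SH(k)$ of topologically $(a-1)$-connected objects contains these generators and is closed under coproducts and cones.

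The first inclusion is precisely lemma~\ref{lem:Conn0}: for $p \ge a$ and any $q \in \Z$, the object $\Sigma^p_{S^1}\Sigma^q_{\G_m}\Sigma_T^\infty X_+$ is topologically $(p-1)$-connected, hence a fortiori topologically $(a-1)$-connected.

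Closure of $\sC_{a-1}$ under small coproducts follows because $\Pi_{p,q}$ is represented by the compact object $\Sigma^p_{S^1}\Sigma^q_{\G_m}\Sigma_T^\infty U_+$ on stalks, so after Nisnevich sheafification we get $\Pi_{p,q}(\bigvee_\alpha E_\alpha) \cong \bigoplus_\alpha \Pi_{p,q}(E_\alpha)$, and a direct sum of zero sheaves is zero. Closure under cones is the standard long exact sequence argument: given a distinguished triangle $A \to B \to C \to A[1]$ with $A, B \in \sC_{a-1}$, applying $[\Sigma^p_{S^1}\Sigma^q_{\G_m}\Sigma_T^\infty U_+,-]_{\SH(k)}$ and sheafifying yields the exact sequence
\[
\Pi_{p,q}(B) \longrightarrow \Pi_{p,q}(C) \longrightarrow \Pi_{p,q}(A[1]) = \Pi_{p-1,q}(A),
\]
whose outer terms vanish for $p \le a-1$ and all $q$, forcing $\Pi_{p,q}(C) = 0$ in this range.

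There is no real obstacle: the content of the argument is the structural description of $\tau^{a,b}\SH(k)$ as the subcategory generated under coproducts and cones by the $K_{a,b}$, which is already packaged in remark~\ref{rem:Hocolim}, together with the elementary verification that topological $(a-1)$-connectedness is a property stable under these two operations.
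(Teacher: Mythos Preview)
Your proof is correct and follows essentially the same approach as the paper's: both arguments use remark~\ref{rem:Hocolim} to describe $\tau^{a,b}\SH(k)$ as the smallest full subcategory of $\SH(k)$ containing the generators and closed under coproducts and cones, invoke lemma~\ref{lem:Conn0} for the generators, and then verify that topological $(a-1)$-connectedness is preserved under these two operations. The only cosmetic difference is that you phrase the argument as an inclusion $\tau^{a,b}\SH(k)\subset\sC_{a-1}$ and spell out the long exact sequence for cones a bit more explicitly.
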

 
 \begin{proof} We need to show that each $\sF$ in $\tau^{a,b}\SH(k)$  is is topologically $a-1$ connected.  By theorem~\ref{thm:truncation}, $\tau^{a,b}\SH(k)$ is the full subcategory of  $K_{a,b}$-colocal objects of $\SH(k)$.  Each element of $K_{a,b}$ is isomorphic in $\SH(k)$ to  $\Sigma^p_{S^1}\Sigma^q_{\G_m}\Sigma^\infty_TX_+$ for some $X\in \Sm/k$, $p\ge a$, $q\ge b$.   By lemma~\ref{lem:Conn0},  $\Sigma^p_{S^1}\Sigma^q_{\G_m}\Sigma_T^\infty X_+$ is topologically $p-1$ connected.

As $\Pi_{a,b}(\oplus_\alpha\sE_\alpha)\cong \oplus_\alpha\Pi_{a,b}\sE_\alpha$ the property  of being  topologically $a-1$ connected is closed under arbitrary coproducts. Similarly, if $\sA$ and $\sB$ are topologically $a-1$ connected and $\sC$ is a cone of a morphism $f:\sA\to \sB$, then
$\sC$ is also topologically $a-1$ connected. But by remark~\ref{rem:Hocolim}, $\tau^{a,b}\SH(k)$ is the smallest full subcategory of 
$\SH(k)$ containing $K_{a,b}$ and closed under taking coproducts and cones of morphisms, and thus each $\sF$ in $\tau^{a,b}\SH(k)$ is 
topologically $a-1$ connected, as desired.
\end{proof}
 
\begin{lem}\label{lem:HomotopySheaf} Let $f:\sF_1\to \sF_2$ be a morphism in $\Sigma^n_T\SH^\eff(k)$. Then $f$ induces an isomorphism
$f_*:\Pi_{a,b}\sF_1\to \Pi_{a,b}\sF_2$
for all $a\in\Z$ and all $b\ge n$ if and only if $f$ is an isomorphism.
\end{lem}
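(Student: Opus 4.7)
The ``if'' direction is immediate, since an isomorphism in $\SH(k)$ induces isomorphisms on all bigraded homotopy sheaves. For the ``only if'' direction, I would form a cone $\sC$ of $f$ in $\SH(k)$, fitting into a distinguished triangle $\sF_1\to \sF_2\to \sC\to \sF_1[1]$. Since $\Sigma^n_T\SH^\eff(k)=\tau^{-\infty,n}\SH(k)$ is a localizing subcategory of $\SH(k)$ (as noted in the remark following theorem~\ref{thm:PostnikovTower}), it is in particular triangulated, so $\sC$ lies in $\tau^{-\infty,n}\SH(k)$. Applying the long exact sequence of homotopy sheaves to the triangle and using the hypothesis on $f_*$ (applied at the shifted object $\sF_1[1]$ as well), one obtains $\Pi_{a,b}\sC=0$ for all $a\in\Z$ and all $b\ge n$. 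The task reduces to showing $\sC\cong 0$.

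The key move is to apply theorem~\ref{thm:PostnikovTower}(3) with $(a,b)=(-\infty,n)$ to the morphism $0\to\sC$ inside $\tau^{-\infty,n}\SH(k)$: this morphism is an isomorphism if and only if $[A,\sC]_{\SH(k)}=0$ for every $A\in K_{-\infty,n}$, i.e.\ for every $A$ of the form $\Sigma^p_{S^1}\Sigma^q_{\G_m}\Sigma^\infty_T X_+$ with $X\in\Sm/k$, $p\in\Z$, and $q\ge n$. For such an $A$, the group $[A,\sC]_{\SH(k)}$ is the value at $X$ of the presheaf
\[
U\mapsto [\Sigma^p_{S^1}\Sigma^q_{\G_m}\Sigma^\infty_T U_+,\sC]_{\SH(k)}.
\]
Replacing $\sC$ by a motivically fibrant representative and invoking Nisnevich descent in $\SH(k)$, this presheaf satisfies the Mayer--Vietoris property for Nisnevich distinguished squares and is therefore already a Nisnevich sheaf, canonically isomorphic to $\Pi_{p,q}\sC$. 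Since $\Pi_{p,q}\sC=0$ for $q\ge n$, evaluation at $X$ yields $[A,\sC]_{\SH(k)}=0$, so $\sC\cong 0$ and $f$ is an isomorphism.

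The main technical step to check carefully is the Nisnevich-descent identification at the end: that for a motivically fibrant $\sC$, the presheaf of $\SH(k)$-maps from $\Sigma^p_{S^1}\Sigma^q_{\G_m}\Sigma^\infty_T U_+$ into $\sC$ is already a Nisnevich sheaf and hence agrees on sections with $\Pi_{p,q}\sC$; once this standard input is accepted, the argument is a direct application of theorem~\ref{thm:PostnikovTower}(3) using that $\tau^{-\infty,n}\SH(k)$ is closed under cones.
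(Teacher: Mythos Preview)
Your overall strategy is sound and close in spirit to the paper's: both reduce via theorem~\ref{thm:PostnikovTower}(3) to checking that the Hom-groups $[\Sigma^p_{S^1}\Sigma^q_{\G_m}\Sigma^\infty_T X_+, -]$ for $q \ge n$ are controlled by the homotopy sheaves $\Pi_{*,q}$. The paper works directly with $f$ rather than passing to a cone, but that difference is cosmetic.

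The genuine gap is in your last step. You assert that the presheaf $U\mapsto [\Sigma^p_{S^1}\Sigma^q_{\G_m}\Sigma^\infty_T U_+,\sC]_{\SH(k)}$ is already a Nisnevich sheaf because it satisfies Mayer--Vietoris for distinguished squares. This inference is incorrect: Mayer--Vietoris gives a \emph{long} exact sequence on a distinguished square, involving the presheaves for all values of $p$ simultaneously; the sheaf condition would require the connecting maps to vanish. In general the presheaf of $\SH(k)$-maps out of suspension spectra is not a Nisnevich sheaf (for $\sC = M\Z$ one gets motivic cohomology, which certainly is not a sheaf in the $U$-variable).

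The gap is closed by a spectral sequence argument. The paper uses the Gersten (coniveau) spectral sequence
\[
E_1^{i,j}= \bigoplus_{x\in X^{(i)}}\Pi_{a-j,m+i}(\sF)(k(x))\Longrightarrow [\Sigma^{a-i-j}_{S^1}\Sigma^m_{\G_m}\Sigma^\infty_TX_+,\sF]_{\SH(k)},
\]
noting that the hypothesis makes $f$ an isomorphism on all $E_1$-terms (the weights $m+i$ that appear are all $\ge n$, and $E_1^{i,j}=0$ for $i\notin[0,\dim X]$ so the sequence is strongly convergent). In your cone formulation one could equally use the Nisnevich descent spectral sequence $E_2^{s,t}=H^s_{\Nis}(X,\Pi_{-t,q}\sC)\Rightarrow [\Sigma^{-s-t}_{S^1}\Sigma^q_{\G_m}\Sigma^\infty_TX_+,\sC]$: since $\Pi_{a,q}\sC=0$ for all $a$ and the spectral sequence converges (Nisnevich cohomological dimension is bounded by Krull dimension), the abutment vanishes. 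Either route works; the bare Mayer--Vietoris claim does not.
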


\begin{proof} One  implication is evident. For the other direction,   by theorem~\ref{thm:PostnikovTower}(3), it suffices to show that $f_*: [\Sigma^a_{S^1}\Sigma^m_T\Sigma^\infty_TX_+,\sF_1]_{\SH(k)}\to [\Sigma^a_{S^1}\Sigma^m_{\G_m}\Sigma^\infty_TX_+,\sF_2]_{\SH(k)}$
is an isomorphism for all $X\in\Sm/k$, $a\in \Z$, $m\ge n$. Filtering $X$ by closed subsets of codimension $i$ for $i=0,\ldots, \dim X$ gives the strongly convergent Gersten spectral sequence, with $E_1^{i,j}\neq0$ only for $0\le i\le\dim X$, 
\[
E_1^{i,j}:= \oplus_{x\in X^{(i)}}\Pi_{a-j,m+i}(\sF)(k(x))\Longrightarrow [\Sigma^{a-i-j}_{S^1}\Sigma^m_{\G_m}\Sigma^\infty_TX_+,\sF]_{\SH(k)}.
\]
By assumption, the map $f$ induces an isomorphism on the $E_1$-terms and hence is an isomorphism on the abutment.
\end{proof}
 
\begin{prop} \label{prop:SliceConn} Take $\sE\in\SH(k)$. Suppose that $\sE$ is topologically $N$-connected. Then 
\\
1. For each  $b\in \Z$,  $a\in\Z\cup\{-\infty\}$, $f_{a,b}\sE$ is topologically $N$-connected.
\\
2. For each $b\in \Z$, $a\le N+1$, the canonical map  $f_{a,b}\sE\to f^t_b\sE$
is an isomorphism in $\SH(k)$.
\end{prop}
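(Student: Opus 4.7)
The plan is to prove (2) first by a cofiber analysis and then deduce (1) as an almost immediate corollary, using only Lemmas~\ref{lem:SliceIsoPi}, \ref{lem:Conn1}, and \ref{lem:HomotopySheaf}.

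For (2), fix $a\le N+1$ and complete the natural map $f_{a,b}\sE\to f^t_b\sE$ to a distinguished triangle $f_{a,b}\sE\to f^t_b\sE\to C\to \Sigma f_{a,b}\sE$ in $\SH(k)$. Since $(-\infty,b)\le(a,b)$, the inclusion $\tau^{a,b}\SH(k)\subset \tau^{-\infty,b}\SH(k)=\Sigma^b_T\SH^\eff(k)$ places both $f_{a,b}\sE$ and $f^t_b\sE$ inside the localizing subcategory $\Sigma^b_T\SH^\eff(k)$, hence $C$ lies there too. By Lemma~\ref{lem:HomotopySheaf}, it is enough to show $\Pi_{p,q}C=0$ for all $p\in\Z$ and all $q\ge b$. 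I would chase this through the long exact sequence of the triangle with three inputs: Lemma~\ref{lem:SliceIsoPi} gives $\Pi_{p,q}f_{a,b}\sE\cong\Pi_{p,q}\sE$ for $p\ge a$, $q\ge b$, and $\Pi_{p,q}f^t_b\sE\cong\Pi_{p,q}\sE$ for all $p$, $q\ge b$; Lemma~\ref{lem:Conn1} gives $\Pi_{p,q}f_{a,b}\sE=0$ for $p\le a-1$; and the hypothesis gives $\Pi_{p,q}\sE=0$ for $p\le N$. For $p\ge a+1$ and for $p=a$, the adjacent groups either match isomorphically or vanish, forcing $\Pi_{p,q}C=0$. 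For $p\le a-2$, the long exact sequence collapses to $\Pi_{p,q}C\cong\Pi_{p,q}\sE$, which vanishes since $p\le a-2\le N-1$. The delicate case is $p=a-1$: here the sequence collapses to $\Pi_{a-1,q}C\cong \Pi_{a-1,q}f^t_b\sE\cong\Pi_{a-1,q}\sE$, and this vanishes precisely because $a-1\le N$. Thus $C=0$ by Lemma~\ref{lem:HomotopySheaf}, proving (2).

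For (1), I would split on $a$. If $a\ge N+1$ is finite, Lemma~\ref{lem:Conn1} immediately shows $f_{a,b}\sE$ is topologically $a-1\ge N$ connected. Otherwise $a\le N$ (finite) or $a=-\infty$, and applying (2) both at $a$ and at $N+1$ yields a chain of isomorphisms $f_{a,b}\sE\iso f^t_b\sE\overset{\sim}{\longleftarrow} f_{N+1,b}\sE$; Lemma~\ref{lem:Conn1} then says the right-hand term is topologically $N$-connected, so $f_{a,b}\sE$ is too.

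The main obstacle is the boundary term $p=a-1$ in the cofiber analysis for (2): the long exact sequence genuinely leaves a copy of $\Pi_{a-1,q}\sE$ that can only be killed by the hypothesized connectivity bound on $\sE$, and this is exactly what pins down the sharp range $a\le N+1$. Everything else is a bookkeeping exercise once one has the right triangulated picture inside $\Sigma^b_T\SH^\eff(k)$.
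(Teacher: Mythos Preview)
Your proof is correct and follows essentially the same approach as the paper: both arguments reduce (2) to checking $\Pi_{p,q}$ for $q\ge b$ via Lemma~\ref{lem:HomotopySheaf} inside $\Sigma^b_T\SH^\eff(k)$, using Lemmas~\ref{lem:SliceIsoPi} and \ref{lem:Conn1} together with the connectivity hypothesis on $\sE$, and then deduce (1) from (2) by comparing with $f_{N+1,b}\sE$. The only cosmetic difference is that you pass to the cofiber $C$ and show $\Pi_{p,q}C=0$, whereas the paper directly verifies that the map $\Pi_{p,q}f_{a,b}\sE\to\Pi_{p,q}f^t_b\sE$ is an isomorphism (both sides being zero for $p<a$), which slightly streamlines the case analysis by avoiding the boundary shift you flag at $p=a-1$.
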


\begin{proof}  We first prove (2). Since both $f_{a,b}\sE$ and $f^t_b\sE$ are in $\Sigma^b_T\SH^\eff(k)$, it suffices to show that $f_{a,b}\sE\to f^t_b\sE$ is an isomorphism in $\Sigma^b_T\SH^\eff(k)$. Thus (lemma~\ref{lem:HomotopySheaf}) we need only see that
$\Pi_{p,q}f_{a,b}\sE\to \Pi_{p,q}f^t_b\sE$ is an isomorphism for all $p\in\Z$, $q\ge b$. 

By lemma~\ref{lem:SliceIsoPi}, the map $\Pi_{p,q}f_{a,b}\sE\to \Pi_{p,q}f^t_b\sE$ is an isomorphism for all $p\ge a$, $q\ge b$. The fact that $\sE$ is $a-1$-connected together with lemma~\ref{lem:SliceIsoPi} tells us that $ \Pi_{p,q}f^t_b\sE=0$ for $p<a$. By lemma~\ref{lem:Conn1} $f_{a,b}\sE$ is  topologically $a-1$-connected, so $\Pi_{p,q}f_{a,b}\sE=0$ for $p<a$, $q\ge b$ as well, and (2) is proved.

For (1), $f_{a,b}\sE$ is  topologically $a-1$-connected for all $b\in \Z$ (lemma~\ref{lem:Conn1}), so it remains to prove (1) for $a\le N$. By (2) (with $a=N+1$) we see that  $f^t_b\sE$ is topologically $N$-connected for all $b\in \Z$. Applying (2) again with $a\le N$ completes the proof of (1).
\end{proof}

\section{The Betti realization} \label{sec:Betti}  There have been a number of constructions of the Betti realization in varying levels of generality, most recently by  Ayoub \cite[definition 2.1]{Ayoub}, but see also Riou \cite{Riou} and Voevodsky \cite[\S 4]{VoevMotivic}. As we will not need the level of generality provided by Ayoub's construction, we use instead an earlier construction due to Panin-Pimenov-R\"ondigs \cite[\S A4]{PPR}.

Let $\Top$ (resp. $\Top_\bullet$) denote the category of (pointed) compactly generated topological spaces. For $X$ a finite type $k$-scheme, we write $X^\an$ for $X(\C)$ with the classical topology. The realization functor is induced by the functor
\[
An:\Sm/k\to \Top
\]
sending a smooth $k$-scheme $X$ to $X^\an$.  Taking the left Kan extension and geometric realization defines the functor $An^*: \Spc(k)\to \Top$ with right adjoint $An_*:\Top\to \Spc(k)$ sending a topological space $T$ to the constant presheaf on the singular complex $n\mapsto Maps(\Delta^n, T)$ of $T$. We have a similar adjoint pair in the pointed setting.  In particular, one has for $X$ a finite type $k$-scheme the natural transformation
\begin{equation}\label{eqn:Nat}
\epsilon_X:An^*(X)\to X^\an.
\end{equation}

 Panin-Pimenov-R\"ondigs \cite[theorem A.3.2]{PPR} define a closed simplicial symmetric monoidal model structure $\sM^{cm}_\bullet(k)$ on $\Spc_\bullet(k)$ for which $(An^*, An_*)$ becomes a Quillen pair of adjoint functors, with $An^*$ in addition a symmetric monoidal functor.  The identity functor $\sM^{cm}_\bullet(k)\to \sM_\bullet(k)$ is a left Quillen equivalence, as the weak equivalences are the same and the cofibrations in $\sM^{cm}_\bullet(k)$ are all cofibrations in $\sM_\bullet(k)$.
 
Let $\P^1_*$ denote $\P^1$ pointed by $1$. Let $j:\G_m\to \A^1$ be the inclusion (we give both schemes the base-point 1) and let $M(j)$ be the pointed mapping cone of $j$, with morphisms $\beta:M(j)\to \A^1/\G_m\to \P^1/\A^1$, and $\gamma:M(j)\to S^1\wedge \G_m=T$. Let $\Spt_{\P^1}(k)$ be the category of $\P^1_*$-spectra in $\Spc_\bullet(k)$ and define $\Spt_{\P^1/\A^1}(k)$, $\Spt_{M(j)}(k)$ similarly.  We  define model structures on $\Spt_{\P^1/\A^1}(k)$,  $\Spt_{\P^1}(k)$ and $\Spt_{M(j)}(k)$ using word for word the definition we have used for $\Spt_T(k)$.

The diagram of maps in $\Spc_\bullet(k)$
\[
\P^1_*\xrightarrow{\alpha}\P^1/\A^1\xleftarrow{\beta}M(j)\xrightarrow{\gamma}T
\]
give rise to functors
\begin{equation}\label{eqn:zigzag}
\Spt_{\P^1}(k)\xleftarrow{\alpha^*}\Spt_{\P^1/\A^1}(k)\xrightarrow{\beta^*}\Spt_{M(j)}(k)\xleftarrow{\gamma^*}\Spt_T(k)
\end{equation}
which induce equivalences on the respective homotopy categories \cite[proposition 2.13]{Jardine2}. The maps $\beta^*$ for instance is defined by sending a $\P^1/\A^1$ spectrum $\sE:=(E_0, E_1,\ldots), \epsilon_n:E_n\wedge \P^1/\A^1\to E_{n+1}$ to the $M(j)$-spectrum $(E_0, E_1,\ldots)$ with bonding maps 
$\beta^*(\epsilon_n):=\epsilon_n\circ(\id\wedge \beta)$.

Next, Panin-Pimenov-R\"ondigs define a model structure $\sM\sS^{cm}(k)$ on  $\Spt_{\P^1}(k)$  and extend $An^*$ to a left Quillen functor from $\sM\sS^{cm}(k)$ to the category of $S^2$ spectra (in compactly generated topological spaces), $\Spt_{S^2}$, using the fact that $An^*$ is symmetric monoidal and that $\P^1(\C)$ is homeomorphic to $S^2$. Denote this left Quillen functor by
\[
An^*_{\P^1}:\sM\sS^{cm}(k)\to \Spt_{S^2}.
\]
Explicitly, $An^*_{\P^1}((E_0, E_1,\ldots), \epsilon_*)$ is the $S^2$-spectrum $(An^*E_0, An^*E_1,\ldots)$ with bonding maps given by
\[
An^*E_n\wedge S^2\cong An^*E_n\wedge An^*(\P^1_*) \cong An^*(E_n\wedge \P^1_*)\xrightarrow{An^*(\epsilon_n)}An^*E_{n+1}.
\]
Let $LAn^*_{\P^1}:\Ho\sM\sS^{cm}(k)\to \Ho\Spt_{S^2}$ be left derived functor of $An^*_{\P^1}$. 

The Jardine model structure $\sM\sS_{\P^1}(k)$ described in \S\ref{sec:Cellular}  is different from  $\sM\sS^{cm}(k)$, but just as in the unstable setting, the identity functor $\sM\sS^{cm}(k)\to \sM\sS_{\P^1}(k)$ is a left Quillen equivalence  (see \cite[theorem A.5.6]{PPR}).

The homotopy category $\Ho \Spt_{S^2}$ is equivalent to the usual stable homotopy category of $S^1$-spectra in $\Spc_\bullet$, $\SH$, so putting all this together, the functor $LAn^*_{\P^1}$ induces the desired Betti realization functor, $
\Re_B^\sigma:\SH(k)\to \SH$.

The following identities are easy to show and are left to the reader.
\begin{equation}\label{eqn:SuspIdent}
\Re_B^\sigma(\Sigma_T\sE)\cong \Sigma^2\Re_B^\sigma(\sE);\ \Re_B^\sigma(\Sigma_{\G_m}\sE)\cong \Sigma\Re_B^\sigma(\sE).  
\end{equation}
In addition, we have: 
\begin{lem} \label{lem:CompBetti1}For $X\in \Sm/k$, $n\ge0$, the map $LAn^*(\Sigma^n_{\P^1}X_+)\to \Sigma^n_{S^2}X^\an_+$ induced by \eqref{eqn:Nat}   is an isomorphism in  $\Ho\Top_\bullet$.
\end{lem}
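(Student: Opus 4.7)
The plan is to derive the lemma from the definitional identities $An^*(Y_+)=Y^{\an}_+$ for $Y\in\Sm/k$ and $An^*(\P^1_*)\cong\P^1(\C)\cong S^2$, invoking the strong symmetric monoidality of $An^*$ on $\sM^{cm}_\bullet(k)$ and a cofibrancy check.

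First, I would verify that the pointed simplicial presheaf $\Sigma^n_{\P^1}X_+=X_+\wedge(\P^1_*)^{\wedge n}$ is cofibrant in $\sM^{cm}_\bullet(k)$. Both $X_+$ and $\P^1_*$ are cofibrant (representable presheaves equipped with either a disjoint basepoint or a $k$-rational basepoint), and the pushout-product axiom for the symmetric monoidal model category $\sM^{cm}_\bullet(k)$ ensures that iterated smash products of cofibrant objects are again cofibrant. Consequently $LAn^*(\Sigma^n_{\P^1}X_+)$ is represented on the nose by $An^*(\Sigma^n_{\P^1}X_+)$, with no cofibrant replacement needed.

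Next, I would invoke the strong symmetric monoidality of $An^*:\sM^{cm}_\bullet(k)\to\Top_\bullet$ \cite[theorem A.3.2]{PPR} to produce a canonical isomorphism
\[
An^*(\Sigma^n_{\P^1}X_+)\;\cong\;An^*(X_+)\wedge An^*(\P^1_*)^{\wedge n}.
\]
The left Kan extension defining $An^*$ sends a representable $Y\in\Sm/k$ to $Y^{\an}$, so the right hand side identifies with $X^{\an}_+\wedge(S^2)^{\wedge n}=\Sigma^n_{S^2}X^{\an}_+$. Under this identification the comparison arrow of the lemma, being induced by the natural transformation $\epsilon$ of \eqref{eqn:Nat} whose components on representables are the identity, is itself an isomorphism in $\Top_\bullet$, and hence an isomorphism in $\Ho\Top_\bullet$.

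The main obstacle I expect is the cofibrancy bookkeeping in the PPR structure and the verification that the strong monoidal coherence map on $An^*$ implements the naive identification on representables; both are technical checks internal to the PPR setup, after which the lemma reduces to its definition.
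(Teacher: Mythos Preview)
Your proposal is correct and follows essentially the same approach as the paper's proof: establish that $X_+$ and $\P^1_*$ are cofibrant in $\sM^{cm}_\bullet(k)$ (the paper cites \cite[Lemma A.10]{PPR}), use the symmetric monoidal model structure to conclude $\Sigma^n_{\P^1}X_+$ is cofibrant so that $LAn^*$ agrees with $An^*$, and then apply the symmetric monoidality of $An^*$ to identify the result with $\Sigma^n_{S^2}X^\an_+$. Your additional remarks about the coherence of the comparison map with $\epsilon$ are fine but not strictly needed.
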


\begin{proof}  $X_+$ and $\P^1_*$ are cofibrant objects of $\sM^{cm}(k)$ \cite[Lemma A.10]{PPR}; as $\sM^{cm}(k)$ is a symmetric monoidal model category, 
$\Sigma^n_{\P^1}X_+$ is cofibrant in $\sM^{cm}(k)$. Thus $LAn^*(\Sigma^n_{\P^1}X_+)\to An^*\Sigma^n_{\P^1}X_+$ is an isomorphism. Since $An^*$ is a symmetric monoidal functor, $An^*\Sigma^n_{\P^1}X_+\cong \Sigma^n_{S^2}X^\an_+$.
\end{proof}

Here is our main theorem on the connectedness of the Betti realization of the slice tower.

\begin{thm}\label{thm:Connected} Suppose that $k$ has an embedding  $\sigma:k\hookrightarrow \C$. Let $\Re_B^\sigma:\SH(k)\to \SH$ be the associated Betti realization functor. For $\sE\in \SH(k)$, if  $\sE$ is topologically $N-1$-connected, then $\Re_B^\sigma(f^t_q\sE)$ is $q+N-1$ connected for all $q\in\Z$.
\end{thm}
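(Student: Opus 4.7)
The plan is to identify $f^t_q\sE$ with the bi-truncated object $f_{N,q}\sE$, to recognize the latter as generated by explicit motivic suspension spectra of smooth varieties, and then to check connectedness of the Betti realization on those generators, using that both the class of sufficiently connected spectra and the functor $\Re_B^\sigma$ respect coproducts and cones.

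First I would invoke Proposition~\ref{prop:SliceConn}(2) with $a=N$ and $b=q$: since $\sE$ is topologically $(N-1)$-connected, the canonical comparison $f_{N,q}\sE \to f^t_q\sE$ is an isomorphism in $\SH(k)$, so it suffices to prove that $\Re_B^\sigma(f_{N,q}\sE)$ is $(q+N-1)$-connected. By Theorem~\ref{thm:truncation} together with Remark~\ref{rem:Hocolim}, $f_{N,q}\sE$ lies in $\tau^{N,q}\SH(k)$, which is the smallest full subcategory of $\SH(k)$ containing the generators $G_{p,r,X} := \Sigma^p_{S^1}\Sigma^r_{\G_m}\Sigma^\infty_T X_+$ for $p \ge N$, $r \ge q$, $X \in \Sm/k$, and closed under small coproducts and cones of morphisms.

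Next I would let $\sC \subset \SH(k)$ be the class of objects $\sF$ with $\Re_B^\sigma(\sF)$ being $(q+N-1)$-connected. Because $\Re_B^\sigma$ is triangulated and preserves small coproducts (it is a left derived functor of a left Quillen functor between stable model categories, assembled from the pieces in Section~\ref{sec:Betti}), and because $c$-connectedness in $\SH$ is closed under small coproducts and cones (via the long exact sequence of homotopy groups associated to a distinguished triangle), the class $\sC$ is closed under both operations. So it is enough to check that every generator $G_{p,r,X}$ lies in $\sC$. Combining the identities~\eqref{eqn:SuspIdent} (applied to both $\Sigma_{S^1}$ and $\Sigma_{\G_m}$) with Lemma~\ref{lem:CompBetti1} transported through the zigzag~\eqref{eqn:zigzag} yields $\Re_B^\sigma(G_{p,r,X}) \cong \Sigma^{p+r}\Sigma^\infty X^\an_+$ in $\SH$. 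Since $\Sigma^\infty X^\an_+$ is $(-1)$-connected, this is $(p+r-1)$-connected, and $p+r-1 \ge N + q - 1$ completes the verification.

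The main obstacle will be the coproduct-preservation claim used above: formally $\Re_B^\sigma$ is the derived functor of a composite of left Quillen functors and left Quillen equivalences on the stable cofibrantly generated model categories recalled in Section~\ref{sec:Cellular}, so one must check that each stage commutes with arbitrary small coproducts at the level of homotopy categories. A secondary, more concrete technical point is to extend Lemma~\ref{lem:CompBetti1} from $\P^1$-suspension spectra to $T$-suspension spectra through the zigzag~\eqref{eqn:zigzag}, which is where the interchange between $\G_m$-suspension and ordinary topological suspension in~\eqref{eqn:SuspIdent} enters.
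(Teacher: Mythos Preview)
Your proposal is correct and follows essentially the same route as the paper's proof: reduce via Proposition~\ref{prop:SliceConn}(2) to showing connectedness of $\Re_B^\sigma$ on $\tau^{N,q}\SH(k)$, check this on the generators $\Sigma^p_{S^1}\Sigma^r_{\G_m}\Sigma^\infty_T X_+$ using the suspension identities and Lemma~\ref{lem:CompBetti1}, and propagate via closure under coproducts and cones. The paper dispatches your ``main obstacle'' in one line by observing that $\Re_B^\sigma$, being the left derived functor of a left Quillen functor, is a left adjoint and hence preserves coproducts.
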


\begin{proof}  Let $\sX=\Sigma^m_{S^1}\Sigma^n_{\G_m}\Sigma_T^\infty X_+$. Since $\Sigma_{\P^1}^\infty X_+$ is cofibrant in $\sM\sS^{cm}(k)$, the identities \eqref{eqn:SuspIdent}  together with lemma~\ref{lem:CompBetti1} show that  $\Re^\sigma_B(\sX)\cong\Sigma^{m+n}\Sigma^\infty X^\an_+$, and thus $\Re_B(\sX)$ is $m+n-1$ connected. As we have noted at the beginning of \S\ref{sec:Postnikov2}, $\tau^{N,q}\SH(k)$ is the smallest full subcategory of $\SH(k)$  containing the objects 
$\Sigma^m_{S^1}\Sigma^n_{\G_m}\Sigma^\infty_TX_+$, $X\in\Sm/k$, $m\ge N, n\ge q$, and closed under  small coproducts and taking cones of morphisms. As $An^*_{\P^1}$ is a left Quillen functor, $\Re_B^\sigma$ is a left adjoint and hence is compatible with arbitrary small coproducts; it is of course exact. As the property of being $q+N-1$ connected is similarly closed under small coproducts and taking cones of morphisms in $\SH$, it follows that $\Re_B^\sigma(\sX)$ is $q+N-1$ connected for all $\sX$ in $\tau^{N,q}\SH(k)$. . Thus, for each $\sE\in \SH(k)$,  $\Re_B^\sigma(f_{N,q}\sE)$ is $N+q-1$ connected. But by proposition~\ref{prop:SliceConn}(2), $f_{N,q}\sE\to f^t_q\sE$ is an isomorphism for $\sE$ topologically $N-1$-connected, completing the proof.
\end{proof}

Our next task is to say something about the Betti realization of symmetric products; for material on symmetric products, we refer the reader to the appendix~\ref{sec;sym}.

\begin{lem}\label{lem:CompBetti2} Let $X$ be a finite type $k$-scheme. Then for all $n\ge1$, the map  $LAn^*(\Sigma^n_{\P^1}X_+)\to \Sigma^n_{S^2} X^\an_+$ induced by \eqref{eqn:Nat}  is an isomorphism in $\Ho\Top_\bullet$.
\end{lem}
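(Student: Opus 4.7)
The plan is to remove the smoothness hypothesis from Lemma~\ref{lem:CompBetti1} by induction on $d=\dim X$, using resolution of singularities (available since $\Char k=0$). First I dispose of the reductions: the nilimmersion $X_{\red}\hookrightarrow X$ is a motivic weak equivalence in $\sM_\bullet(k)$, because for any $U\in\Sm/k$ a $k$-algebra homomorphism $\sO(X)\to\sO(U)$ automatically factors through $\sO(X_{\red})$, so the presheaves represented by $X$ and $X_{\red}$ agree on $\Sm/k$; meanwhile $X^\an=X_{\red}^\an$ as topological spaces. Hence I may assume $X$ is reduced. The base case $d=0$ follows directly from Lemma~\ref{lem:CompBetti1}: a reduced zero-dimensional finite-type $k$-scheme is a disjoint union $\coprod_i\Spec k_i$ of spectra of finite separable field extensions of $k$, and each $\Spec k_i$ is smooth over $k$.

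For the inductive step, assume the lemma for all finite-type $k$-schemes of dimension strictly less than $d$, and let $X$ be reduced of dimension $d$. Resolution of singularities yields a closed subscheme $Z\subset X$ of dimension $<d$ (containing $X_{\sing}$) and a proper birational morphism $\pi\colon\tilde X\to X$ with $\tilde X$ smooth, such that $\pi$ restricts to an isomorphism over $X\setminus Z$. Set $E:=\pi^{-1}(Z)_{\red}\subset\tilde X$; it has dimension $<d$ and fits into the abstract blow-up square
$$\xymatrix{E\ar[r]\ar[d] & \tilde X\ar[d]^\pi \\ Z\ar[r] & X}$$
After $\Sigma^\infty_{\P^1}$ this square is homotopy cocartesian in $\SH(k)$ by cdh descent in characteristic zero, while its analytification is homotopy cocartesian in $\Ho\Top_\bullet$ by proper descent (the map $\tilde X^\an\cup_{E^\an}Z^\an\to X^\an$ is a proper surjection inducing a weak equivalence). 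Applying $LAn^*_{\P^1}$, which as a left adjoint preserves homotopy pushouts, to the motivic square produces a natural map of pushout squares in $\Ho\Top_\bullet$. Lemma~\ref{lem:CompBetti1} gives the required isomorphism for $\tilde X$, the inductive hypothesis supplies it for $Z$ and $E$, and pushouts of isomorphisms are isomorphisms, so the comparison for $X$ follows.

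The main obstacle is the cdh descent step: showing that the abstract blow-up square becomes a homotopy pushout in $\SH(k)$ after $\Sigma^\infty_{\P^1}$. In characteristic zero this is a theorem of Cisinski--D\'eglise extending Voevodsky's cdh descent for motivic cohomology. An alternative that sidesteps invoking cdh descent for $\SH(k)$ is to verify the pushout property directly at the level of $\Re_B^\sigma$: since the analytification of the blow-up square is a topological pushout, one can resolve $X_+$ by an iterated cone built from the square and apply the termwise comparison granted by Lemma~\ref{lem:CompBetti1} and the induction hypothesis. Either route rests on the geometric match between resolution of singularities and proper descent for compact analytic spaces.
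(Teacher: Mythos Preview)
Your induction-on-dimension strategy via abstract blow-up squares is sound in outline, but there is a genuine gap: the lemma is an \emph{unstable} statement --- the map lives in $\Ho\Top_\bullet$, and $LAn^*$ is the unstable realization --- yet your argument passes to the stable category. You write ``after $\Sigma^\infty_{\P^1}$ this square is homotopy cocartesian in $\SH(k)$'' and then ``applying $LAn^*_{\P^1}$'', but $LAn^*_{\P^1}$ lands in $\Ho\Spt_{S^2}$, not $\Ho\Top_\bullet$, and stable cdh descent (your Cisinski--D\'eglise citation) does not recover the unstable pushout property you actually need. What is required is that $\Sigma^n_{\P^1}$ of the blow-up square be a homotopy pushout in $\sH_\bullet(k)$, so that the (unstable) left Quillen functor $LAn^*$ carries it to a homotopy pushout in $\Ho\Top_\bullet$. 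This is exactly Voevodsky's unstable theorem \cite[Theorem~4.2]{VoevNisCdh}: in characteristic zero the Nisnevich and cdh unstable motivic homotopy categories agree after a single $S^1$-suspension, hence after $\Sigma^n_{\P^1}$ for $n\ge1$. Once you invoke that result, your inductive argument goes through.

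Compared with the paper's proof, the difference is organizational rather than substantive: the paper takes a full smooth cdh hypercover $X_\bullet\to X$ at once and applies Voevodsky's theorem to get $\Sigma^n_{\P^1}X_{\bullet+}\cong\Sigma^n_{\P^1}X_+$ in $\sH_\bullet(k)$; on the analytic side it argues that $|\Sigma^n_{S^2}X^\an_{\bullet+}|\to\Sigma^n_{S^2}X^\an_+$ is a homology isomorphism (since $X_\bullet^\an\to X^\an$ is a classical hypercover) and then a weak equivalence by Hurewicz, using simple connectivity from $n\ge1$. Your approach unpacks the same cdh input into a single blow-up square plus induction. The paper's route is cleaner on the topological side --- ``homology iso plus simply connected'' is more robust than your ``proper surjection inducing a weak equivalence'', which as stated needs the additional observation that closed analytic inclusions such as $E^\an\hookrightarrow\tilde X^\an$ are cofibrations in order for the strict pushout to be a homotopy pushout. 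Both proofs rest on the same key unstable input from \cite{VoevNisCdh}; your proposed alternative via $\Re_B^\sigma$ again lives in the stable world and does not help with the unstable statement.
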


\begin{proof} Let $X_\bullet\to X$ be a  cdh hypercover with each $X_n$ smooth over $k$; such $X_\bullet$ exists since $k$ admits resolution of singularities.  By Voevodsky's theorem \cite[Theorem 4.2]{VoevNisCdh} comparing the unstable motivic homotopy categories for the Nisnevich and $cdh$ topologies,  for $n\ge1$, 
\[
\Sigma^n_{\P^1}X_{\bullet+}\cong \Sigma^n_{\P^1} X_+
\]
in $\sH_\bullet(k)$,  and hence we have an isomorphism after applying $LAn^*$ (actually, Voevodsky shows the finer result that $\Sigma_{S^1} X_{\bullet+}\to \Sigma_{S^1} X_+$ is an isomorphism).

As noted in the proof of lemma~\ref{lem:CompBetti1},  each term $\Sigma^n_{\P^1}X_{p+}$ in the simplicial object $\Sigma^n_{\P^1}X_\bullet$ is cofibrant, hence the canonical map $LAn^*\Sigma^n_{\P^1}X_{\bullet+}\to An^* \Sigma^n_{\P^1}X_{\bullet+}$
is a weak equivalence in $\Top_\bullet$. Thus we have the isomorphisms in $\Ho\Top_\bullet$
\[
LAn^*(\Sigma^n_{\P^1} X_+)\cong LAn^*(\Sigma^n_{\P^1}X_{\bullet+})
 \cong An^* \Sigma^n_{\P^1}X_{\bullet+}\cong |\Sigma^n_{S^2}X_{\bullet+}^\an|,
\]
where $|S|$ denotes the geometric realization of a simplicial space $S$. 

Since $X_\bullet\to X$ is a cdh hypercover, it follows that $X_\bullet^\an\to X^\an$ is a hypercover for the classical topology. In particular, the map
$|\Sigma^n_{S^2}X^\an_{\bullet+}|\to \Sigma^n_{S^2} X^\an_+$
is a homology isomorphism. As $n\ge1$, both spaces are simply connected, it therefore follows from the relative Hurewicz theorem that this map is an isomorphism in $\Ho\Top_\bullet$, which completes the proof.
\end{proof}

For a topological space $T$, we let $\Sym^nT:=T^n/\Sigma_n$, where the symmetric group $\Sigma_n$ acts by permuting the factors. If $T$ is pointed by $t\in T$, we write  $\Sym^n_\bullet T$ for  the pointed space $(\Sym^nT, \Sym^nt)$.

\begin{lem}\label{lem:CompBetti3}  For $Y\in\Sm/k$, $n\ge1$, $m, N\ge0$ integers, the natural map 
\[
LAn^*\Sigma_{\P^1}^n\Sym^N_\bullet\Sigma^m_{\P^1}Y_+\to \Sigma^n_{S^2}\Sym^N_\bullet \Sigma^m_{S^2}Y^\an_+
\]
is a weak equivalence in $\Top_\bullet$.
\end{lem}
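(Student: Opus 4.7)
The plan is to extend the cdh-hypercover argument used in Lemma~\ref{lem:CompBetti2} to the (typically singular) scheme-theoretic quotients that arise inside $\Sym^N_\bullet \Sigma^m_{\P^1}Y_+$, relying on the symmetric-product formalism established in the appendix~\ref{sec;sym}.

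First I would use the appendix to rewrite $\Sym^N_\bullet \Sigma^m_{\P^1}Y_+$ as a pointed simplicial presheaf built in an explicit way from the schemes $\Sym^i(Y^j)$ (for various $i,j$) smashed with suitable powers of $\P^1_*$. Each $\Sym^i(Y^j)$ is a finite type $k$-scheme, generally singular when $\dim Y>0$; by resolution of singularities (available in characteristic zero) I choose, for each such $\Sym^i(Y^j)$, a cdh hypercover $Z_\bullet \to \Sym^i(Y^j)$ with every $Z_p$ smooth over $k$. Voevodsky's theorem~\cite[Theorem 4.2]{VoevNisCdh} then yields an isomorphism $\Sigma^n_{\P^1}Z_{\bullet +}\cong \Sigma^n_{\P^1}\Sym^i(Y^j)_+$ in $\sH_\bullet(k)$ (the hypothesis $n\ge 1$ is crucial here). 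Applying $LAn^*$ termwise and invoking Lemma~\ref{lem:CompBetti1} identifies $LAn^*(\Sigma^n_{\P^1}(Z_p)_+)$ with $\Sigma^n_{S^2}(Z_p^\an)_+$. Since $Z_\bullet^\an\to \Sym^i(Y^j)^\an$ is a hypercover in the classical topology, geometric realization yields a homology equivalence onto $\Sigma^n_{S^2}\Sym^i(Y^j)^\an_+$; simple connectivity (thanks to $n\ge 1$) together with the relative Hurewicz theorem then promote this to a weak equivalence in $\Top_\bullet$, exactly as in the proof of Lemma~\ref{lem:CompBetti2}.

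To finish I would identify the topological output with $\Sigma^n_{S^2}\Sym^N_\bullet\Sigma^m_{S^2}Y^\an_+$, using the elementary fact that analytification commutes with finite products and with $\Sigma_N$-quotients, so that $(\Sym^i Z)^\an \cong \Sym^i(Z^\an)$ for any finite type $k$-scheme $Z$, together with the obvious topological counterpart of the appendix's symmetric-product decomposition. The main obstacle I foresee is the careful bookkeeping between the algebraic and topological decompositions: the symmetric power $\Sym^N_\bullet$ of a pointed object does not split cleanly over smash products, so one must ensure that the appendix's algebro-geometric decomposition matches its topological mirror compatibly under $LAn^*$; this combinatorial compatibility is precisely the content that appendix~\ref{sec;sym} is designed to supply.
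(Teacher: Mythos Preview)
Your overall strategy---reduce $\Sym^N_\bullet\Sigma^m_{\P^1}Y_+$ to honest finite-type $k$-schemes and then invoke lemma~\ref{lem:CompBetti2}---is the paper's strategy too, but your reduction step is left vague at exactly the point that matters. The appendix does \emph{not} supply a direct decomposition of $\Sym^N_\bullet\Sigma^m_{\P^1}Y_+$ into pieces of the form ``$\Sym^i(Y^j)$ smashed with powers of $\P^1_*$''; as you yourself note, symmetric powers do not split cleanly over smash products, and no such splitting appears in appendix~\ref{sec;sym}. What the appendix actually provides (lemma~\ref{lem:SymInd}) is an \emph{inductive} co-cartesian square: with $X=(\P^1)^m\times Y_+$ and $A=(\P^1,1)^m\times Y_+$ (so that $X/A=\Sigma^m_{\P^1}Y_+$), one has a pushout relating $\Sym^N_\bullet(X/A)$ to $\Sym^{N-1}_\bullet(X/A)$ and to the two genuine \emph{schemes} $\Sym^N_\bullet X$ and $\Sym^N_\bullet(X,A)$.

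The paper's proof is simply induction on $N$ via this square. After $\Sigma^n_{\P^1}$-suspension the square is homotopy co-cartesian in $\sM^{cm}_\bullet(k)$, and its analytification is homotopy co-cartesian in $\Top_\bullet$; since $LAn^*$ is the left derived functor of a left Quillen functor, it carries the former to a homotopy pushout as well. Three of the four comparison maps between these pushouts are already weak equivalences---the two scheme vertices by lemma~\ref{lem:CompBetti2}, the $\Sym^{N-1}$ vertex by the inductive hypothesis---so the fourth is too. Your proposal could in principle be completed by iterating this square all the way down to $N=0$, but that is just the paper's induction unrolled; it is cleaner to set up the induction on $N$ explicitly and handle one square at a time.
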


\begin{proof}  We proceed by induction on $N\ge2$, the case $N=0$ being obvious and the case $N=1$ following from lemma~\ref{lem:CompBetti2}. We use the notation from appendix~\ref{sec;sym}.
 
We apply lemma~\ref{lem:SymInd},  taking $X=(\P^1)^m\times Y_+$, $A= (\P^1,1)^m\times Y_+$, giving the co-cartesian diagram 
\[
\xymatrix{
\Sym^N_\bullet(X,A) \ar[r]^{i}\ar[d]_{\bar\pi_N}&\Sym_\bullet^NX\ar[d]^{\Sym^N\pi }\\
\Sym^{N-1}_\bullet \Sigma^m_{\P^1}Y_+ \ar[r]_{st_N}&\Sym^N_\bullet \Sigma^m_{\P^1}Y_+}
\]
in $\Spc^\sC_\bullet(k)$, which we may further suspend to give the co-cartesian diagram in $\Spc^\sC_\bullet(k)$
\begin{equation}\label{eqn:SymQuot}
\xymatrix{
\Sigma^n_{\P^1}\Sym^N_\bullet(X,A) \ar[r]^{i}\ar[d]_{\bar\pi_N}&\Sigma^n_{\P^1}\Sym_\bullet^NX\ar[d]^{\Sym^N\pi }\\
\Sigma^n_{\P^1}\Sym^{N-1}_\bullet \Sigma^m_{\P^1}Y_+ \ar[r]_{st_N}&\Sigma^n_{\P^1}\Sym^N_\bullet \Sigma^m_{\P^1}Y_+.}
\end{equation}
Since the restriction functor $\Spc^\sC_\bullet(k)\to \Spc_\bullet(k)$ preserves co-cartesian diagrams, we may consider this last diagram as a co-cartesian diagram in $\Spc_\bullet(k)$.

The map $i$ is a monomorphism hence $i$ is a cofibration in the Jardine model structure on $\Spc_\bullet(k)$. Thus, \eqref{eqn:SymQuot}  is homotopy co-cartesian; as the cofibrations in $\sM^{cm}_\bullet(k)$ are cofibrations in the Jardine model structure, \eqref{eqn:SymQuot} is homotopy co-cartesian in  $\sM^{cm}_\bullet(k)$.

If we apply $(-)^\an$ to the diagram \eqref{eqn:SymQuot} we have the co-cartesian and homotopy co-cartesian diagram of cofibrant objects in $\Top_\bullet$
\begin{equation}\label{eqn:SymQuotAn}
\xymatrix{
\Sigma_{S^2}^n  \Sym^N_\bullet(X,A)^\an\ar[r]^i\ar[d] & \Sigma_{S^2}^n\Sym_\bullet^NX^\an\ar[d]^\pi\\
\Sigma^n_{S^2}\Sym^{N-1}_\bullet \Sigma^m_{S^2}Y^\an_+ \ar[r] &\Sigma_{S^2}^n\Sym^N_\bullet\Sigma^m_{S^2}Y^\an_+.}
\end{equation}

Since $LAn^*$ is the left derived functor of a left Quillen functor,  $LAn^*$ transforms  homotopy  co-cartesian diagrams  in  $\sM^{cm}_\bullet(k)$  into homotopy  co-cartesian diagrams in $\Top_\bullet$; in particular $LAn^*\hbox{\eqref{eqn:SymQuot}}$ is homotopy co-cartesian.  The map
\[
LAn^*\Sigma^n_{\P^1}\Sym^{N-1}_\bullet \Sigma^m_{\P^1}Y_+ \to \Sigma^n_{S^2}\Sym^{N-1}_\bullet \Sigma^m_{S^2}Y^\an_+
\]
is a weak equivalence by induction. As $\Sym^N_\bullet(X,A)$ is represented by a closed subscheme of $\Sym^N_\bullet X$,   the maps 
\[
LAn^* \Sigma_{\P^1}^n\Sym^N_\bullet X\to  \Sigma_{S^2}^n\Sym^N_\bullet X^\an,\ LAn^*\Sigma_{\P^1}^n\Sym^N_\bullet(X,A)\to \Sigma_{S^2}^n \Sym^N_\bullet(X,A)^\an
\]
are weak equivalences by lemma~\ref{lem:CompBetti2}, hence 
\[
LAn^*\Sigma_{\P^1}^n\Sym^{N}_\bullet\Sigma^m_{\P^1}Y_+\to
\Sigma_{S^2}^n\Sym^{N}_\bullet\Sigma^m_{S_2}Y^\an_+
\]
is a weak equivalence as well.
\end{proof}

We have the $\P^1$-spectrum
\[
(\Sigma^\infty_{\P^1}X_+)^\tr_\eff:=(\Sym^\infty_\bullet X_+, \Sym^\infty_\bullet  \Sigma_{\P^1}X_+, \ldots, \Sym^\infty_\bullet  \Sigma_{\P^1}^n X_+,\ldots).
\]
The bonding maps are defined via the product maps
\[
\Sym^N_\bullet   \Sigma_{\P^1}^n X_+\wedge \P^1_*=\Sym^N_\bullet   \Sigma_{\P^1}^n X_+\wedge \Sym^1_\bullet  \P^1_*\to \Sym^N_\bullet ( \Sigma_{\P^1}^n X_+\wedge \P^1_*).
\]
We let $(\Sigma^\infty_TX_+)^\tr_\eff\in\SH(k)$ be the object   corresponding to $(\Sigma^\infty_{\P^1}X_+)^\tr_\eff$ via the equivalence of categories $\SH(k)\sim \Ho\Spt_{\P^1}(k)$ described above.

For $S$ a pointed space, one has the  spectrum
\[
(\Sigma^\infty S)^\tr_\eff:=(\Sym^\infty_\bullet S, \Sym^\infty_\bullet \Sigma S, \ldots, \Sym^\infty_\bullet \Sigma^n S,\ldots).
\]
with bonding maps defined as above.  We let $H\Z:=(\Sigma^\infty S^0)^\tr_\eff$. The Dold-Thom theorem can be phased as

\begin{thm}[\hbox{\cite{DoldThom}}] Suppose $S$ has the homotopy type of a pointed countable CW complex. Then $\pi_n (\Sigma^\infty S)^\tr_\eff=\tilde{H}_n(S,\Z)$ for $n\in \Z$.
\end{thm}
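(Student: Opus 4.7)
The plan is to reduce the statement to the classical form of the Dold--Thom theorem for infinite symmetric products of pointed CW complexes, and then to unwind the spectrum-level statement using the suspension isomorphism. Concretely, write $SP^\infty(Y) := \Sym^\infty_\bullet Y$. By definition, the spectrum $(\Sigma^\infty S)^\tr_\eff$ has $k$-th space $SP^\infty(\Sigma^k S)$, and one checks that the bonding maps agree up to homotopy with the natural maps arising from the inclusion $Y \wedge S^1 \to SP^\infty(Y \wedge S^1)$ composed with products. Hence
\[
\pi_n (\Sigma^\infty S)^\tr_\eff = \colim_k \pi_{n+k}\bigl(SP^\infty(\Sigma^k S)\bigr).
\]
For $k$ large enough that $\Sigma^k S$ is connected (which happens for $k \geq 1$ after replacing $S$ by a CW model), the classical Dold--Thom theorem identifies $\pi_{n+k}(SP^\infty(\Sigma^k S))$ with $\tilde H_{n+k}(\Sigma^k S,\Z)$. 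The suspension isomorphism in reduced singular homology then identifies this with $\tilde H_n(S,\Z)$, independently of $k$, so the colimit collapses to $\tilde H_n(S,\Z)$.

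For the classical statement $\pi_m(SP^\infty Y) \cong \tilde H_m(Y,\Z)$ for a pointed connected CW complex $Y$, I would follow Dold--Thom's original strategy of verifying the Eilenberg--Steenrod axioms for $h_m(Y) := \pi_m(SP^\infty Y)$ as a reduced homology theory on the category of pointed CW complexes. The functor $Y \mapsto SP^\infty Y$ is manifestly homotopy invariant and commutes with wedges up to weak equivalence (since $SP^\infty(Y \vee Z) \simeq SP^\infty Y \times SP^\infty Z$), so the wedge axiom holds. The computation on $S^0$ gives $SP^\infty S^0 \simeq \N$ as a discrete monoid, group-completing after taking loops to $\Z$ in degree $0$, which normalizes the theory to ordinary homology.

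The main obstacle, as in Dold--Thom's original argument, is the exactness axiom: for a CW pair $(Y,A)$, one must show that the sequence
\[
SP^\infty A \longrightarrow SP^\infty Y \longrightarrow SP^\infty(Y/A)
\]
induces a long exact sequence on homotopy groups. This is done by proving that the projection $SP^\infty Y \to SP^\infty(Y/A)$ is a quasi-fibration with fiber weakly equivalent to $SP^\infty A$. The quasi-fibration property is verified by filtering $SP^\infty Y$ by the subspaces $SP^n Y$, showing inductively that each restricted map $SP^n Y \to SP^n(Y/A)$ is a quasi-fibration over the stratum of points of degree exactly $n$ (via a local triviality argument over open cells), and then invoking the gluing lemma for quasi-fibrations to pass to the colimit.

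Once exactness is established, $h_* = \pi_*(SP^\infty(-))$ satisfies all the Eilenberg--Steenrod axioms for a reduced ordinary homology theory with coefficient group $\Z$, and the uniqueness theorem for such theories on CW complexes delivers the natural isomorphism $h_m(Y) \cong \tilde H_m(Y,\Z)$. Applying this with $Y = \Sigma^k S$ and taking the colimit as above completes the proof.
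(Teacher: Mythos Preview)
The paper does not prove this theorem at all: it is stated with a citation to Dold and Thom's original paper \cite{DoldThom} and then used as a black box. So there is no ``paper's own proof'' to compare against; the paper treats this as a classical input.

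Your sketch is essentially the standard Dold--Thom argument and is correct in outline. A couple of small points worth tightening. First, your dimension-axiom remark about $SP^\infty S^0 \simeq \N$ is slightly off-target: you are verifying the axioms for $h_m(Y) = \pi_m(SP^\infty Y)$ on \emph{connected} based CW complexes (since that is where the quasi-fibration argument lives), so the coefficient computation should be done on spheres $S^n$ with $n \ge 1$, where one finds $SP^\infty S^n \simeq K(\Z,n)$. The passage from the monoid $\N$ to the group $\Z$ is really handled by the spectrum-level colimit you wrote down, not by a separate group-completion step. Second, the phrase ``after replacing $S$ by a CW model'' is fine, but you should also note that $SP^\infty$ preserves weak equivalences between CW complexes (or invoke CW approximation once at the start), so that this replacement is harmless.

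With those adjustments your argument is a faithful account of the classical proof; it simply supplies what the paper takes for granted.
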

In particular, $H\Z$ is isomorphic in $\SH$ to the Eilenberg-MacLane spectrum $EM(\Z)$. 

\begin{prop}\label{prop:BettiReal} For $X\in \Sm/k$, there is a natural isomorphism in $\SH$
\[
\Re_B^\sigma((\Sigma^\infty_TX_+)^\tr_\eff)\cong (\Sigma^\infty X^\an_+)^\tr_\eff
\]
\end{prop}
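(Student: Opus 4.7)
The plan is to work via the equivalence $\SH(k)\simeq \Ho\Spt_{\P^1}(k)$, under which $(\Sigma^\infty_T X_+)^\tr_\eff$ is represented by the $\P^1$-spectrum $E$ with
\[
E_n \;=\; \Sym^\infty_\bullet\Sigma^n_{\P^1}X_+ \;=\; \colim_N \Sym^N_\bullet\Sigma^n_{\P^1}X_+,
\]
and to compute its Betti realization by applying the left Quillen functor $An^*_{\P^1}\colon \sM\sS^{cm}(k)\to \Spt_{S^2}$. The topological target $F:=(\Sigma^\infty X^\an_+)^\tr_\eff$ has $n$-th level $F_n = \colim_N \Sym^N_\bullet\Sigma^n_{S^2}X^\an_+$. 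Since $An^*$ is symmetric monoidal and preserves the colimit defining $\Sym^N_\bullet$, the map in \eqref{eqn:Nat} induces a natural transformation of $S^2$-spectra $\eta\colon An^*E \to F$; the goal is to show that $\eta$ becomes a weak equivalence after left derivation, and that it respects bonding maps.

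The key reduction is as follows. The basepoint inclusions $\Sym^N_\bullet Z \hookrightarrow \Sym^{N+1}_\bullet Z$ are monomorphisms, hence cofibrations in the Jardine (and $\sM^{cm}_\bullet$) model structure on $\Spc_\bullet(k)$ and likewise in $\Top_\bullet$; consequently each $E_n$ and $F_n$ is a sequential homotopy colimit in $N$. Because $LAn^*$ is a left adjoint in a Quillen pair, it preserves such homotopy colimits, so it suffices to check, for each fixed $N\ge 0$, that the $N$-th stage of $\eta$ is a weak equivalence in $\Ho\Top_\bullet$. Here lemma~\ref{lem:CompBetti3} is the essential input, but it requires the outer suspension degree to be $\ge 1$, so I would apply it after one extra $\P^1$-suspension to obtain
\[
LAn^*\bigl(\Sigma_{\P^1}\Sym^N_\bullet\Sigma^n_{\P^1}X_+\bigr) \;\simeq\; \Sigma_{S^2}\Sym^N_\bullet\Sigma^n_{S^2}X^\an_+,
\]
and then use the invertibility of $\Sigma_T$ in $\SH(k)$ together with $\Re^\sigma_B\circ\Sigma_T\cong\Sigma^2\circ\Re^\sigma_B$ from \eqref{eqn:SuspIdent} to desuspend at the stable level.

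Finally, I would verify that $\eta$ intertwines the bonding maps, which on both sides come from the multiplication $\Sym^N_\bullet A\wedge \Sym^1_\bullet B \to \Sym^N_\bullet(A\wedge B)$ composed with the identifications $\Sym^1_\bullet \P^1_* = \P^1_*$ (resp.\ $\Sym^1_\bullet S^2 = S^2$); this is formal from $An^*$ being symmetric monoidal together with $An^*(\P^1_*)\cong S^2$. The main obstacle I anticipate is the stabilization issue: one cannot apply lemma~\ref{lem:CompBetti3} with outer suspension degree $0$, so the argument must systematically work with pre-suspended spectra and rely on the stability of $\SH$ to conclude; a secondary difficulty is bookkeeping the cofibrant-replacement data in $\sM\sS^{cm}(k)$ carefully enough to pass from the underived identity $An^*(\Sym^N_\bullet Y) = \Sym^N_\bullet An^*(Y)$ to the derived statement at each finite $N$ before taking the colimit.
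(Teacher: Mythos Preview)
Your approach is correct and closely parallels the paper's, with one notable simplification on the paper's side worth pointing out. Both arguments pass to $\P^1$-spectra, compute $LAn^*$ levelwise, invoke lemma~\ref{lem:CompBetti3} (which needs outer suspension $\ge 1$), and then use stability to absorb that extra suspension. The difference is in the spectrum model: you work with $E_n=\Sym^\infty_\bullet\Sigma^n_{\P^1}X_+$ and must argue that this sequential colimit is a homotopy colimit so that $LAn^*$ commutes with it; the paper instead replaces $(\Sigma^\infty_{\P^1}X_+)^\tr_\eff$ up to stable equivalence by the ``diagonal'' spectrum $(\pt,\Sym^1_\bullet\Sigma_{\P^1}X_+,\ldots,\Sym^m_\bullet\Sigma^m_{\P^1}X_+,\ldots)$, so that each level is already a finite symmetric product and lemma~\ref{lem:CompBetti3} applies directly, with no colimit bookkeeping. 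The paper then observes that $\Sigma_{S^2}\phi_m$ is a weak equivalence for all $m$ and concludes $\phi$ is a stable weak equivalence---this is exactly your desuspension step phrased slightly differently.

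One small correction: monomorphisms are cofibrations in the Jardine structure $\sM_\bullet(k)$ but not necessarily in $\sM^{cm}_\bullet(k)$ (which has fewer cofibrations). Your hocolim conclusion is still fine, since the two model structures share the same weak equivalences and the sequential colimit is a homotopy colimit in $\sM_\bullet(k)$, hence in $\Ho\sM^{cm}_\bullet(k)$ as well; just phrase it that way rather than asserting cofibrancy in $\sM^{cm}_\bullet(k)$.
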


\begin{proof}  For a pointed space $S$, let  $(\Sigma_{S^2}^\infty S)^\tr_\eff$ be the $S^2$-spectrum 
\[
(\Sigma_{S^2}^\infty S)^\tr_\eff:=(\Sym^\infty S, \Sym^\infty \Sigma_{S^2} S, \ldots, \Sym^\infty \Sigma^n_{S^2} S,\ldots).
\]
 
We show that for  $X\in \Sm/k$, there is a natural isomorphism in $\Ho\,\Spt_{S^2}$
\[
LAn^*((\Sigma^\infty_{\P^1}X_+)^\tr_\eff)\cong (\Sigma^\infty_{S^2} X^\an_+)^\tr_\eff,
\]
from which the result follows directly. 

Up to isomorphism in $\Ho\,\Spt_{\P^1}(k)$, we can represent $(\Sigma^\infty_{\P^1}X_+)^\tr_\eff$   by the spectrum
\[
(\pt,  \Sym^1_\bullet \Sigma_{\P^1} X_+,\ldots,\Sym^{n}_\bullet \Sigma^n_{\P^1} X_+,\ldots)
\]
using the same formula as above for the bonding maps, followed by the evident stabilization map. We have a similar representation for 
$(\Sigma^\infty_{S^2} X^\an_+)^\tr_\eff$.

We have the natural map in $\Spt_{S^2}$
\begin{multline*}
LAn^*_{\P^1}(\pt,  \Sym^1_\bullet\Sigma_{\P^1} X_+,\ldots,\Sym^{n}_\bullet \Sigma^n_{\P^1} X_+,\ldots)\\
\xrightarrow{\phi}
(\pt, \Sym^1_\bullet\Sigma_{S^2}X^\an_+,\ldots, \Sym^n_\bullet\Sigma^n_{S^2}X^\an_+,\ldots)
\end{multline*}
which on the $m$th term in the sequence is equivalent in $\Ho\Top_\bullet$ to the natural map 
$\phi_m:LAn^*\Sym^m_\bullet\Sigma^m_{\P^1} X_+\to \Sym^m_\bullet\Sigma^m_{S^2}X^\an_+$. By lemma~\ref{lem:CompBetti3}, $\Sigma_{S^2}\phi_m$ is a weak equivalence in $\Top_\bullet$ for all $m$, hence $\phi$ is a stable weak equivalence.    
\end{proof}

R\"ondigs-{\O}stv{\ae}r \cite{RoendigsOstvar} consider the category $\ChSS^\tr_{\P^1}$ of symmetric $\Z^{tr}(\P^1_*)$-spectra in (unbounded) chain complexes of presheaves with transfer on $\Sm/k$. They define a model structure on $\ChSS^\tr_{\P^1}$;  the homotopy category $\Ho\ChSS^\tr_{\P^1}$ is denoted $\DM(k)$.  On has as well the category of (unbounded) effective motives, $\DM^\eff(k)$, defined as the localization of the unbounded derived category of Nisnevich sheaves with transfers with respect to the localizing category generated by objects $\Z^\tr(X\times \A^1)\to \Z^\tr(X)$. $\DM^\eff(k)$ contains Voevodsky's category $\DM_-^\eff(k)$ as a full triangulated subcategory, which in turn contains Voevodsky's category of effective geometric motives $\DM^\eff_\gm(k)$ as a full triangulated subcategory. One has the Spanier-Whitehead category  of geometric motives
$\DM^\eff_\gm(k)[-\otimes\Z(1)^{-1}]=:\DM_\gm(k)$; by Voevodsky's cancellation theorem \cite{VoevCanc}, the functors in the diagram of triangulated tensor categories
\[
\xymatrix{
\DM^\eff_\gm\ar[r]\ar[d]&\DM^\eff_-(k)\ar[r]&\DM^\eff(k)\ar[d]\\
\DM_\gm(k)\ar[rr]&&\DM(k)
}
\]
are all fully faithful embeddings. For details we refer the reader to  \cite[section 2.3]{RoendigsOstvar}.

We have the category of symmetric $T$-spectra $\Spt_{T}^\Sigma(k)$ (see \cite[\S4]{Jardine2}); giving this the model structure defined by Jardine {\it loc.\,cit.}\,defines the model category of symmetric motivic  spectra. We recall \cite[theorem 4.31]{Jardine2} which states that forgetting the symmetric structure induces an equivalence of triangulated categories $\Ho\Spt_T^\Sigma(k)\to\SH(k)$. 

R\"ondigs-{\O}stv{\ae}r \cite{RoendigsOstvar} define  a commutative monoid object $M\Z$ in $\Spt_{T}^\Sigma(k)$. One may consider the category $\Mod\text{-}M\Z$ of modules  for $M\Z$ in symmetric motivic spectra . Let $F:\Mod\text{-}M\Z\to \Spt^\Sigma_T(k)$ be the forgetful functor. Defining a morphism $f$ in $\Mod\text{-}M\Z$ to be a fibration, resp. weak equivalence, if $F(f)$ is so in $ \Spt^\Sigma_T(k)$  gives  $\Mod\text{-}M\Z$ a model category structure for which $F$ becomes a right Quillen functor, with left adjoint the free $M\Z$-module functor $\sE\mapsto M\Z\wedge\sE$. This yields the  functor $RF:\Ho\,\Mod\text{-}M\Z\to \SH(k)$.

The main result of \cite{RoendigsOstvar} is

\begin{thm}[\hbox{\cite[theorem 1.1]{RoendigsOstvar}}] \label{thm:RO} Let $k$ be a field of characteristic zero. Then there is an equivalence of $\DM(k)$ with $\Ho\,\Mod\text{-}M\Z$ as triangulated tensor categories,  sending $\Z^\tr(X)$ to 
$M\Z\wedge X_+$ for $X\in \Sm/k$.
\end{thm}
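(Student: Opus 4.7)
The plan is to construct an explicit Quillen equivalence between the model category $\ChSS^\tr_{\P^1}$ (whose homotopy category is $\DM(k)$) and $\Mod\text{-}M\Z$, and then observe that the induced equivalence on homotopy categories is monoidal and triangulated. The starting observation is that the category $\ChSS^\tr_{\P^1}$ sits over $\Spt_T^\Sigma(k)$ via a forgetful-type functor (forget transfers, apply Dold--Kan normalization to turn a chain complex into a simplicial object), while $\Mod\text{-}M\Z$ sits over $\Spt_T^\Sigma(k)$ via the $M\Z$-module forgetful functor. The goal is to show both forgetful functors factor a common ``motivic cohomology'' structure, so that their sources are equivalent.

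First I would construct a Quillen adjunction
\[
L:\Spt_T^\Sigma(k)\rightleftarrows \ChSS^\tr_{\P^1}:R,
\]
where $L$ applies the free presheaf-with-transfers functor $\Z^\tr(-)$ levelwise, together with the Dold--Kan correspondence to land in chain complexes of presheaves with transfer; and $R$ is the corresponding right adjoint built from the normalized chain complex of the underlying simplicial presheaf. One checks this is a Quillen adjunction using Jardine's generating (trivial) cofibrations for $\Spt_T^\Sigma(k)$ and the R\"ondigs--{\O}stv{\ae}r model structure on $\ChSS^\tr_{\P^1}$. The key point is that $L$ carries the unit $\1$ of $\Spt_T^\Sigma(k)$ to an object canonically identified with $M\Z$: this is essentially the definition of $M\Z$ as built from symmetric products with transfer (the Dold--Thom philosophy made motivic). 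Since $L$ is monoidal and $M\Z = L(\1)$ is a commutative monoid, the adjunction $(L,R)$ lifts to an adjunction
\[
\bar L:\Mod\text{-}M\Z\rightleftarrows \ChSS^\tr_{\P^1}:\bar R,
\]
with $\bar L(M\Z\wedge\sE)=L(\sE)$; in particular $\bar L(M\Z\wedge X_+)=\Z^\tr(X)$.

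Next I would show $(\bar L,\bar R)$ is a Quillen equivalence by verifying the derived unit and counit are isomorphisms on a set of compact generators. On the $\DM(k)$ side, the shifted Tate twists $\Z^\tr(X)(q)[p]$, $X\in\Sm/k$, generate (as a localizing triangulated category closed under the relevant operations); on the $\Mod\text{-}M\Z$ side, the objects $M\Z\wedge\Sigma^p_{S^1}\Sigma^q_{\G_m}\Sigma_T^\infty X_+$ similarly generate. By the previous paragraph, $\bar L$ carries one set bijectively to the other up to canonical isomorphism. The check then reduces to identifying bi-graded mapping groups
\[
[M\Z\wedge\Sigma^p_{S^1}\Sigma^q_{\G_m}\Sigma_T^\infty X_+,M\Z\wedge Y_+]_{\mathrm{Ho}\,\Mod\text{-}M\Z}
\cong [\Z^\tr(X)(q)[p+q],\Z^\tr(Y)]_{\DM(k)},
\]
both sides of which compute the motivic cohomology group $H^{-p,-q}(X\times Y,\Z)$ (shifted appropriately). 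This last identification is the content of Voevodsky's comparison between his motivic complexes and the motivic spectrum $M\Z$ in characteristic zero (the Hopkins--Morel--Hoyois framework supplies the most robust statement), and this is where the characteristic hypothesis enters the argument.

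The main obstacle in this program is the identification of $\bar R(M\Z)$ as (a cofibrant resolution of) the unit of $\ChSS^\tr_{\P^1}$, equivalently the identification of mapping spaces in the last paragraph, because it requires knowing that the module spectrum $M\Z$ really represents motivic cohomology with the expected ``transfers'' structure. Once this is established, the remaining verifications are formal: the equivalence is triangulated because both sides are stable model categories and $\bar L$ is a left Quillen functor (preserving cofiber sequences and shifts), and it is monoidal because $L$, hence $\bar L$, is strong symmetric monoidal at the level of model categories by construction (both monoidal structures are smash products, and $\bar L$ commutes with smash via the natural isomorphism $\Z^\tr(X\wedge Y)\cong \Z^\tr(X)\otimes\Z^\tr(Y)$).
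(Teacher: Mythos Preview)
First, note that the paper does not itself prove this theorem: it is quoted as the main result of R\"ondigs--{\O}stv{\ae}r, so there is no in-paper proof to compare against. Your overall strategy---build a Quillen adjunction between motivic spectra and spectra of complexes with transfers, lift it to $M\Z$-modules, then check the derived unit/counit on generators---is indeed the shape of their argument, though they route through the intermediate category of motivic functors rather than working directly with $\ChSS^\tr_{\P^1}$.

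There is, however, a genuine directional error in your setup that breaks the lifting step. You write that ``$L$ carries the unit $\1$ of $\Spt_T^\Sigma(k)$ to an object canonically identified with $M\Z$,'' but $L$ lands in $\ChSS^\tr_{\P^1}$, where $L(\1)$ is $\Z^\tr(\Spec k)$, the tensor unit there---not $M\Z$, which lives in $\Spt_T^\Sigma(k)$. The correct statement, and the one that drives the lifting, is that the \emph{right} adjoint $R$ (forget transfers, then Dold--Kan) is lax monoidal and satisfies $R(\Z^\tr(\Spec k))\simeq M\Z$; this is essentially how $M\Z$ is defined in the R\"ondigs--{\O}stv{\ae}r framework. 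Lax monoidality of $R$ then makes every $R(C)$ an $M\Z$-module, so $R$ factors as $\bar R:\ChSS^\tr_{\P^1}\to\Mod\text{-}M\Z$, and it is the left adjoint $\bar L$ to this $\bar R$ that satisfies $\bar L(M\Z\wedge X_+)\cong\Z^\tr(X)$. Invoking strong monoidality of $L$ alone does not produce a functor out of $\Mod\text{-}M\Z$; you need the lax monoidal right adjoint carrying the unit to $M\Z$. (Relatedly, ``$\bar R(M\Z)$'' in your last paragraph does not typecheck, since $\bar R$ has source $\ChSS^\tr_{\P^1}$.)

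A smaller point: the characteristic-zero hypothesis enters through resolution of singularities (controlling homotopy-invariant presheaves with transfers and the model structure on the transfer side), not through the Hopkins--Morel--Hoyois computation of the slices of $\MGL$, which is an unrelated input used elsewhere in the paper.
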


We write 
\[
\EM:\DM(k)\to \SH(k)
\]
for the  functor induced by the equivalence $\Ho\,\Mod\text{-}M\Z\cong \DM(k)$ and the forgetful functor $RF:\Ho\,\Mod\text{-}M\Z\to \SH(k)$, and $\Z^{tr}:\SH(k)\to \DM(k)$ for the left adjoint to $\EM$.

\begin{rem} The functor $\EM$ preserves arbitrary coproducts: this is a general fact about an exact functor $R$ between compactly generated triangulated categories admitting arbitrary coproducts, such that $R$ admits a left adjoint $L$ with the property that $L(A)$ is compact if $A$ is compact. In this case, $L$ is the functor $\Z^{tr}$, $\SH(k)$ has the compact generators 
$\Sigma^a_{S^1}\Sigma^b_{\G_m}\Sigma^\infty_TU_+$, $a,b\in\Z$, $U\in \Sm/k$, and $\Z^{tr}(\Sigma^a_{S^1}\Sigma^b_{\G_m}\Sigma^\infty_TU_+)=\Z^{tr}(U)(b)[a+b]$, which is compact in $\DM(k)$.
\end{rem}

\begin{lem}\label{lem:EM} For $X$ in $\Sm/k$,  $\EM(\Z^{tr}(X))\cong (\Sigma^\infty_T\Spec X_+)^\tr_\eff$    in $\SH(k)$.
\end{lem}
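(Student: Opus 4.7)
The plan is to combine the R\"ondigs-{\O}stv{\ae}r equivalence of Theorem~\ref{thm:RO} with the Hopkins-Morel-Hoyois identification of $M\Z$ as an infinite-symmetric-product spectrum. Under the equivalence $\DM(k)\simeq \Ho\,\Mod\text{-}M\Z$, the object $\Z^{tr}(X)\in\DM(k)$ corresponds to the free $M\Z$-module $M\Z\wedge X_+$, so applying the forgetful functor $RF$ yields
\[
\EM(\Z^{tr}(X))\;\cong\; M\Z\wedge \Sigma^\infty_TX_+
\]
in $\SH(k)$. It therefore suffices to produce a natural isomorphism $M\Z\wedge \Sigma^\infty_TX_+\cong(\Sigma^\infty_TX_+)^\tr_\eff$.

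For the base case $X=\Spec k$, the Hopkins-Morel theorem (in the form proved by Hoyois \cite{Hoyois}) identifies the unit $M\Z\in \SH(k)$ with $(\Sigma^\infty_T\Spec k_+)^\tr_\eff$. For a general $X\in \Sm/k$, I would construct a natural comparison map $\psi_X\colon M\Z\wedge \Sigma^\infty_TX_+\to (\Sigma^\infty_TX_+)^\tr_\eff$ from the level-wise product maps
\[
\Sym^\infty_\bullet T^{\wedge n}\wedge X_+\longrightarrow \Sym^\infty_\bullet(T^{\wedge n}\wedge X_+)\cong \Sym^\infty_\bullet\Sigma^n_TX_+,\qquad [t_1\cdots t_m]\wedge x\mapsto [(t_1\wedge x)\cdots(t_m\wedge x)],
\]
which are routinely compatible with the bonding maps of both spectra.

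The main obstacle will be verifying that $\psi_X$ is a stable weak equivalence, since the above level-wise maps are not unstable equivalences (as one already sees at level $0$). The approach I would take is to observe that $(\Sigma^\infty_TX_+)^\tr_\eff$ carries a natural $M\Z$-module structure induced by the monoid multiplication on symmetric products, and that $\psi_X$ is a map of $M\Z$-modules. Translated through the R\"ondigs-{\O}stv{\ae}r equivalence, the assertion becomes that the $M\Z$-module $(\Sigma^\infty_TX_+)^\tr_\eff$ corresponds to $\Z^{tr}(X)$ in $\DM(k)$. By the base case, both sides agree when $X=\Spec k$, and the general case then follows from a motivic Dold-Thom type computation identifying the Nisnevich homotopy sheaves of $\Sym^\infty_\bullet\Sigma^n_TX_+$ with the motivic homology of $X$ with $\Z$-coefficients, matching $\Pi_{*,*}(M\Z\wedge\Sigma^\infty_TX_+)$ in the appropriate degrees.
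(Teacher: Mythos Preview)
Your reduction to showing $M\Z\wedge\Sigma^\infty_TX_+\cong(\Sigma^\infty_TX_+)^\tr_\eff$ in $\SH(k)$ is exactly the first move the paper makes. After that, however, your argument diverges from the paper's and runs into two real problems.

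First, the attribution to Hopkins--Morel--Hoyois is off. The Hopkins--Morel theorem \cite{Hoyois} computes the slices of $\MGL$; it does not identify $M\Z$ with the infinite-symmetric-product sphere spectrum. That identification is not a deep theorem but a matter of unwinding definitions: R\"ondigs--{\O}stv{\ae}r construct $M\Z$ from the motivic functor $Y\mapsto\Z^{tr}(Y)$, and Voevodsky's result (lemma~\ref{lem:SymTransfer}) identifies $\Sym^\infty_\bullet(Y)$ with $\Z^{tr}_\eff(Y)$ as presheaves on $\Sm/k$. So the ``base case'' is not Hopkins--Morel at all.

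Second, and more seriously, your final paragraph is circular. You say that after translating through the R\"ondigs--{\O}stv{\ae}r equivalence the claim becomes that the $M\Z$-module $(\Sigma^\infty_TX_+)^\tr_\eff$ corresponds to $\Z^{tr}(X)$ in $\DM(k)$ --- but that is literally a restatement of the lemma. Invoking a ``motivic Dold--Thom type computation'' to match homotopy sheaves is not a proof; the content of such a computation \emph{is} the lemma. Your comparison map $\psi_X$ is reasonable, but you have given no argument that it is a stable equivalence beyond the case $X=\Spec k$.

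The paper avoids this by working with the explicit model. Passing to $\P^1$-spectra, $M\Z_{\P^1}$ is levelwise $(\Z^{tr}(S^0_k),\Z^{tr}(\P^1_*),\ldots,\Z^{tr}(\Sigma^n_{\P^1}S^0_k),\ldots)$ by construction. Lemma~\ref{lem:SymTransfer} rewrites $(\Sigma^\infty_{\P^1}X_+)^\tr_\eff$ levelwise as $\Z^{tr}_\eff(\Sigma^n_{\P^1}X_+)$; the passage from $\Z^{tr}_\eff$ to $\Z^{tr}$ is a stable weak equivalence because the homotopy category is additive. One is then left with comparing the $\P^1$-spectrum $(\Z^{tr}(X),\Z^{tr}(\Sigma_{\P^1}X_+),\ldots)$ with $M\Z_{\P^1}\wedge X_+$, and for this the paper cites specific structural results \cite[theorem 4.2, corollary 5.3, lemma 5.4]{RoendigsOstvar} giving the isomorphism in $\Ho\,\Mod\text{-}M\Z_{\P^1}$. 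That last citation is the genuine input replacing your hand-wave.
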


\begin{proof}  By theorem~\ref{thm:RO}, we need to show that $M\Z\wedge X_+$ and $(\Sigma^\infty_T\Spec X_+)^\tr_\eff$  are isomorphic  in $\SH(k)$.    R\"ondigs-{\O}stv{\ae}r \cite{RoendigsOstvar} construct $M\Z$ as the symmetric $T$-spectrum corresponding to the motivic functor $M\Z$ which sends $X\in \Sm/k$ to the Nisnevich sheaf of sets $\Z^{tr}(X)$ on $\Sm/k$, $\Z^{tr}(X)(Y)=\Cor_k(Y,X)$.  The structure of $M\Z$ as a motivic functor\footnote{See \cite{DRO} for details on motivic functors.} induces maps $A\wedge M\Z(B)\to M\Z(A\wedge B)$ for all finitely presented objects $A, B$ in $\Spc_\bullet(k)$, which allows one to the construct  the symmetric $T$-spectrum $M\Z$ from the motivic functor $M\Z$ in the evident manner. One can similarly construct a $\P^1$-spectrum $M\Z_{\P^1}$ as $(M\Z(S^0_k), M\Z(\P^1_*),\ldots, M\Z(\Sigma^n_{\P^1}S^0_k),\ldots)$, with bonding maps given using the maps mentioned above. Via the zig-zag diagram 
\eqref{eqn:zigzag}, we see that $M\Z_{\P^1}$ and $M\Z$ have isomorphic images in $\SH(k)$. Thus, it suffices to show that  $M\Z_{\P^1}\wedge X_+$ is isomorphic to  $(\Sigma^\infty_{\P^1}\Spec X_+)^\tr_\eff$ in $\Ho\,\Spt_{\P^1}(k)$. 

By lemma~\ref{lem:SymTransfer},  $(\Sigma^\infty_{\P^1}\Spec X_+)^\tr_\eff=(\Z^{tr}_\eff(X), \ldots, \Z^{tr}_\eff(\Sigma^n_{\P^1}X_+),\ldots)$.  Define $(\Sigma^\infty_{\P^1}\Spec X_+)^{\tr}$ as the $\P^1$-spectrum $(\Z^{tr}(X), \ldots, \Z^{tr}(\Sigma^n_{\P^1}X_+),\ldots)$ with the evident bonding maps. The canonical natural transformation $\Z^{tr}_\eff(-)\to \Z^{tr}(-)$ defines a map of $\P^1$-spectra
$\phi:(\Sigma^\infty_{\P^1}X_+)^\tr_\eff\to (\Sigma^\infty_{\P^1} X_+)^\tr$. 
Using the fact that   $\Ho\,\Spt_{\P^1}(k)$ is an additive category,  it is not hard to see that $\phi$ is a stable $\A^1$-weak equivalence. 

In general,  we need to see that  $(\Sigma^\infty_{\P^1}\Spec X_+)^\tr\cong M\Z_{\P^1}\wedge X_+$ in $\Ho\,\Spt_{\P^1}(k)$. It suffices to have an isomorphism in $\Ho\,\Mod\text{-}M\Z_{\P^1}$ and then apply the forgetful functor to $\SH(k)$; a natural isomorphism   in $\Ho\,\Mod\text{-}M\Z_{\P^1}$ is given by applying \cite[theorem 4.2, corollary 5.3 and lemma 5.4]{RoendigsOstvar}, after replacing $T$ with $\P^1_*$.
\end{proof}

Putting proposition~\ref{prop:BettiReal} together with lemma~\ref{lem:EM} yields:
\begin{prop}\label{prop:RealEM}  For $X\in \Sm/k$, there is a natural isomorphism in $\SH$
\[
\Re_B^\sigma(\EM(\Z^{tr}(X)))\cong (\Sigma^\infty X^\an_+)^\tr_\eff.
\]
\end{prop}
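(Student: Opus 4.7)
The plan is essentially to concatenate the two preceding results, so the proof should be a short assembly rather than a new argument. First I would invoke Lemma \ref{lem:EM} to identify $\EM(\Z^{tr}(X))$ with $(\Sigma^\infty_T X_+)^\tr_\eff$ in $\SH(k)$; this identification is natural in $X \in \Sm/k$ since it is built from the natural transformation $\Z^{tr}_\eff(-)\to \Z^{tr}(-)$ together with the natural $M\Z$-module isomorphisms from \cite{RoendigsOstvar} cited in that lemma. Applying the exact functor $\Re_B^\sigma$ to this isomorphism then gives an isomorphism
\[
\Re_B^\sigma(\EM(\Z^{tr}(X))) \;\cong\; \Re_B^\sigma((\Sigma^\infty_T X_+)^\tr_\eff)
\]
in $\SH$, natural in $X$.

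Next I would apply Proposition \ref{prop:BettiReal}, which provides a natural isomorphism $\Re_B^\sigma((\Sigma^\infty_T X_+)^\tr_\eff) \cong (\Sigma^\infty X^\an_+)^\tr_\eff$ in $\SH$. Composing the two isomorphisms yields the stated identification, and naturality in $X$ is preserved under composition.

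There is no real obstacle: both ingredients have been proved above, and the functor $\Re_B^\sigma$ is a well-defined triangulated functor on $\SH(k)$, so applying it to an isomorphism in $\SH(k)$ produces an isomorphism in $\SH$. The only point worth mentioning explicitly is that the naturality in $X$ of each of the two isomorphisms is clear from their constructions, so no additional coherence check is required.
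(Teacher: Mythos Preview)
Your proposal is correct and matches the paper's own proof exactly: the paper simply states that putting proposition~\ref{prop:BettiReal} together with lemma~\ref{lem:EM} yields the result, which is precisely the composition you describe.
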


For a triangulated category $\sT$, we let   $\sT_{tor}$ be the full subcategory of objects $\sE$ such that $\Hom_{\sT}(A, \sE)\otimes\Q=0$
for all compact objects $A$ in $\sT$.   For $X\in \sT$, $N>0$ an integer,  we let $X/N$ denote an object of $\sT$ that fits into a distinguished triangle $X\xrightarrow{n\cdot\id}X\to X/N\to X[1]$.

\begin{rem}\label{rem:TorGen} If $\sT$ has a set $\sS$ of compact generators, then the proof of \cite[lemma 4.3]{NSO} shows that  $\sE$ is in $\sT_{tor}$ if and only if $\Hom_{\sT}(A, \sE)\otimes\Q=0$ for all $A\in \sS$. 
Take $q\ge-\infty$. As the objects $\Sigma_{S^1}^a\Sigma_{\G_m}^b\Sigma^\infty_TX_+$, $a,b\in\Z$, $b\ge q$, $X\in\Sm/k$, form a set of compact generators for  $\tau^{-\infty,q}\SH(k)$, using the Gersten spectral sequence as in the proof of lemma~\ref{lem:HomotopySheaf} shows that $\sE$ is in  $\tau^{-\infty,q}\SH(k)_{tor}$  if and only if $\Pi_{a,b}(\sE)$ is a sheaf of torsion abelian groups for  all $a,b\in\Z$, $b\ge q$.

In addition, this shows that, for  $q\ge-\infty$, $Tor_q:=\{\Sigma_{S^1}^a\Sigma_{\G_m}^b\Sigma^\infty_TX_+/N\}$, $a,b\in \Z$, $b\ge q$, $N>1$, $X\in\Sm/k$,
is a set of compact  generators for  $\tau^{-\infty,q}\SH(k)_{tor}$.  By \cite[theorem 2.1(2)]{NeemanDuality},  $\tau^{-\infty,q}\SH(k)_{tor}$ is the smallest localizing subcategory of $\tau^{-\infty,q}\SH(k)$ containing $Tor_q$. Thus,  for $q'<q$,  the inclusion functor $\tau^{-\infty,q}\SH(k)\to \tau^{-\infty,q'}\SH(k)$ maps $\tau^{-\infty,q}\SH(k)_{tor}$ to $\tau^{-\infty,q'}\SH(k)_{tor}$; in particular, 
\[
\tau^{-\infty,q}\SH(k)_{tor}=\tau^{-\infty,q}\SH(k)\cap \SH(k)_{tor}.
\]

Replacing $\Sigma_{S^1}^a\Sigma_{\G_m}^b\Sigma^\infty_TX_+$ with $\Z^{tr}(X)(b)[a]$, analogous results hold in $\DM(k)$; the results described in this remark are discussed in somewhat more detail in \cite[appendix B]{LevineConv}.
\end{rem}

We require the following result, which is essentially a rephrasing of the theorem of Suslin-Voevodsky \cite[theorem 8.3]{SuslinVoev}.

\begin{cor}\label{cor:SV} Take $M\in \DM^\eff(k)_{tor}$ and let $\sigma:k\hookrightarrow \C$ be an embedding. Suppose $k$ is algebraically closed. Then the Betti realization induces an isomorphism
\[
\Re_{B*}^\sigma:\Pi_{n,0}\EM(M)(k)\to \pi_n(\Re_B^\sigma(\EM(M))).
\]
for all $n\in\Z$.
\end{cor}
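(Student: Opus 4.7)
The plan is to reduce, via a localizing-subcategory argument, to the case $M=\Z^{tr}(X)/N$ for $X\in\Sm/k$ and $N>1$, and then to identify the source with Suslin homology and the target with singular homology, both with $\Z/N$-coefficients, where the classical Suslin-Voevodsky theorem \cite[theorem 8.3]{SuslinVoev} supplies the required isomorphism.

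First I would verify that both functors $M\mapsto\Pi_{n,0}\EM(M)(k)$ and $M\mapsto\pi_n\Re_B^\sigma\EM(M)$ are exact and commute with arbitrary small coproducts. The functor $\EM:\DM(k)\to\SH(k)$ preserves coproducts by the remark following theorem~\ref{thm:RO}; $\Re_B^\sigma$ is a left adjoint and hence preserves coproducts; and $\pi_n$ is exact and commutes with coproducts in $\SH$. On the source side, since $k$ is algebraically closed the Nisnevich site over $\Spec k$ has only trivial covers, so
\[
\Pi_{n,0}(\sE)(k)=[\Sigma^n_{S^1}\Sigma_T^\infty\Spec k_+,\sE]_{\SH(k)},
\]
and compactness of $\Sigma_T^\infty\Spec k_+$ in $\SH(k)$ gives exactness and preservation of coproducts. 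It follows that the full subcategory of $M\in\DM^\eff(k)_{tor}$ for which $\Re_{B*}^\sigma$ is an isomorphism for every $n\in\Z$ is a localizing subcategory. By remark~\ref{rem:TorGen}, $\DM^\eff(k)_{tor}$ is generated as a localizing subcategory by the objects $\Z^{tr}(X)/N$ with $X\in\Sm/k$ and $N>1$, so it suffices to establish the statement for $M$ of this form.

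Next I would evaluate both sides on such a generator. By lemma~\ref{lem:EM}, $\EM(\Z^{tr}(X))\cong(\Sigma^\infty_TX_+)^{tr}_{\eff}$; applying this to the distinguished triangle defining $\Z^{tr}(X)/N$ gives $\EM(\Z^{tr}(X)/N)\cong(\Sigma^\infty_TX_+)^{tr}_{\eff}/N$. Adjunction between $\Z^{tr}$ and $\EM$, together with Voevodsky's identification $\Hom_{\DM^\eff(k)}(\Z[n],\Z^{tr}(X)/N)\cong H^{Sus}_n(X,\Z/N)$, then identifies the source with Suslin homology mod $N$ in degree $n$. On the target side, proposition~\ref{prop:RealEM} and the Dold-Thom theorem yield
\[
\pi_n\Re_B^\sigma\EM(\Z^{tr}(X)/N)\cong\pi_n\bigl((\Sigma^\infty X^{\an}_+)^{tr}_{\eff}/N\bigr)\cong H_n(X^{\an},\Z/N).
\]
The Suslin-Voevodsky comparison theorem supplies a natural isomorphism between these two, which completes the argument.

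The principal technical obstacle is checking that $\Re_{B*}^\sigma$ agrees, under these identifications, with the Suslin-Voevodsky comparison map rather than differing by, say, a sign or twist. This should follow from the naturality built into proposition~\ref{prop:BettiReal}: the Suslin-Voevodsky map is induced on $\Z^{tr}(X)/N$ by the inclusion of finite correspondences on $X\times\Delta^\bullet_k$ into singular chains of $X^{\an}\times\Delta^\bullet_{\mathrm{top}}$, and both the symmetric-product model of $\EM(\Z^{tr}(X))$ used in lemma~\ref{lem:EM} and its Betti realization in proposition~\ref{prop:RealEM} are compatible with this inclusion by construction.
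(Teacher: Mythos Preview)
Your proposal is correct and follows essentially the same route as the paper's proof: reduce via a localizing-subcategory argument to the generators of $\DM^\eff(k)_{tor}$, identify both sides with Suslin and singular homology mod $N$, and invoke the Suslin--Voevodsky theorem. Two small points where the paper is slightly more careful: first, the generators furnished by remark~\ref{rem:TorGen} are $\Z^{tr}(X)(b)[a]/N$ with $b\ge 0$, and the paper explicitly absorbs the Tate twist by noting $\Z^{tr}(X)(b)$ is a summand of $\Z^{tr}(X\times\P^b)$; second, for the compatibility of $\Re_{B*}^\sigma$ with the Suslin--Voevodsky comparison map, the paper gives a concrete description of a class in $H^{Sus}_n(X,\Z/N)$ as a map of pairs $(\Delta^n_k,\partial\Delta^n_k)\to(\Sym^{MN}X,\,N\times\Sym^MX)$ and observes that its analytification is exactly the image under $\Re_{B*}^\sigma$, which makes the identification transparent rather than relying on naturality in the abstract.
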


\begin{proof} Let $\sC\subset  \DM^\eff(k)_{tor}$ be the full subcategory of objects $M$ for which the result holds. Then $\sC$ is a triangulated subcategory of $\DM^\eff(k)_{tor}$. We have already noted that the functor $\EM$ preserves arbitrary coproducts; since $\Re_{B*}^\sigma$ is a left adjoint, this functor preserves  arbitrary coproducts as well. Since both $\Pi_{n,0}(-)$ and $\pi_n(-)$ commute with arbitrary coproducts,  $\sC$ is closed under  arbitrary coproducts. 

By remark~\ref{rem:TorGen} and the fact that $\Z^{tr}(X)(b)$ is a summand of $\Z^{tr}(X\times\P^b)$ for $b\ge0$,  the objects $\Z^{tr}(X)[n]/N$,  $X\in\Sm/k$, $n\in\Z$, $N>1$, form a set of compact generators for  $\DM^\eff(k)_{tor}$, and it suffices to see that  all these objects  are in $\sC$. 

Let $M=\Z^{tr}(X)/N$. Then $\EM(M)\cong(\Sigma^\infty_TX_+)^\tr/N$ by the definition of $\EM$ and lemma~\ref{lem:EM}.  As $\EM$ has left adjoint $M\Z\wedge(-)$ (and similarly for $EM:D(\Ab)\to \SH$), we have natural isomorphisms
\begin{align}\label{align:SVIso}
&\Pi_{n,0}((\Sigma^\infty_TX_+)^\tr/N)(k)\cong H^{Sus}_n(X,\Z/N)\\
&\pi_n((\Sigma^\infty X^\an_+)^\tr/N)\cong H^{sing}_n(X^\an,\Z/N).\notag
\end{align}

Each element $\alpha\in H^{Sus}_n(X,\Z/N)$ is represented by a map of pairs of schemes $\tilde{\alpha}:(\Delta_k^n,\del\Delta_k^n)\to (\Sym^{MN}X,N\times (\Sym^{M}X))$, where
\[
N\times :\Sym^{M}X\to \Sym^{MN}X
\]
is the multiplication map, $\Delta^n_k:=\Spec k[t_0,\ldots, t_n]/\sum_it_i-1$ is the algebraic $n$-simplex over $k$ and $\del\Delta^n_k\subset \Delta^n_k$ is the closed subscheme defined by $\prod_{i=0}^nt_i=0$. From this representation, one sees that the map
\[
\Re_{B*}^\sigma:\Pi_{n,0}((\Sigma^\infty_TX_+)^\tr/N)\to \pi_n((\Sigma^\infty X^\an_+)^\tr/N)
\]
is compatible, via the isomorphisms \eqref{align:SVIso}, with the map
\[
H^{Sus}_n(X,\Z/N)\to H^{sing}_n(X^\an,\Z/N)
\]
sending $\tilde{\alpha}$ to $\tilde{\alpha}^\an\in \pi_n((\Sigma^\infty X^\an_+)^\tr/N)\cong  H^{sing}_n(X^\an,\Z/N)$.
By the Suslin-Voevodsky theorem \cite[theorem 8.3]{SuslinVoev}, this latter map induces an isomorphism
$H^{Sus}_n(X,\Z/N)\to H^{sing}_n(X^\an,\Z/N)$, which completes the proof.
\end{proof}

\section{Proof of the main theorem} \label{sec:Proof}  We begin by studying the layers and slices of suspension spectra in $\SH(k)$. For a field $F$, the {\em cohomological dimension} of $F$ is by definition the cohomological dimension of the absolute Galois group of $F$  \cite[I, \S 3.1]{Serre}.

We have the canonical distinguished triangle of endofunctors on $\SH(k)$
\[
f^t_{q+1}\to f^t_q\to s_q\to f^t_{q+1}[1].
\]
\begin{lem} \label{lem:RatlVanish} Suppose $k$ has  finite cohomological dimension. Then for $X\in \Sm/k$ of dimension $d$ over $k$,  and for $q\ge d+1$, $f^t_q(\Sigma^\infty_T X_+)$ and  $s_q(\Sigma^\infty_T X_+)$ are in $\SH(k)_{tor}$.
In particular, $f_q^t(\mS_k)$ and $s_q(\mS_k)$  are  in  $\SH(k)_{tor}$ for $q\ge1$.
\end{lem}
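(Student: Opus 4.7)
The plan is to reduce the lemma to a rational-vanishing statement for $\Pi_{p,r}(\Sigma^\infty_T X_+)$ and then invoke Morel's rational decomposition of $\SH(k)_\Q$ together with a dimension bound on rational motivic cohomology. First, the statement for $s_q$ follows from the statement for $f^t_q$: the defining distinguished triangle $f^t_{q+1}(\sE) \to f^t_q(\sE) \to s_q(\sE) \to f^t_{q+1}(\sE)[1]$, together with the fact that $\SH(k)_{tor}$ is a triangulated subcategory, lets us conclude $s_q \in \SH(k)_{tor}$ once we know $f^t_q, f^t_{q+1} \in \SH(k)_{tor}$. The ``in particular'' assertion for $\mS_k$ is the special case $X = \Spec k$, where $d = 0$ and so $q \geq 1 = d+1$.

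Next, I would reduce to a rational-vanishing statement for $\Sigma^\infty_T X_+$ itself. Since $f^t_q(\Sigma^\infty_T X_+)$ lies in $\tau^{-\infty,q}\SH(k)$, Remark~\ref{rem:TorGen} says that $f^t_q(\Sigma^\infty_T X_+) \in \SH(k)_{tor}$ is equivalent to $\Pi_{p,r}(f^t_q(\Sigma^\infty_T X_+))$ being a sheaf of torsion abelian groups for all $p \in \Z$ and all $r \geq q$. By Lemma~\ref{lem:SliceIsoPi}, the canonical morphism $f^t_q(\Sigma^\infty_T X_+) \to \Sigma^\infty_T X_+$ induces an isomorphism $\Pi_{p,r}(f^t_q(\Sigma^\infty_T X_+)) \cong \Pi_{p,r}(\Sigma^\infty_T X_+)$ for $r \geq q$. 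So the whole lemma reduces to showing
\[
\Pi_{p,r}(\Sigma^\infty_T X_+) \otimes \Q = 0 \quad \text{for all } p \in \Z \text{ and all } r \geq d+1.
\]

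For this vanishing, the plan is to apply Morel's rational decomposition $\SH(k)_\Q \simeq \SH(k)_\Q^+ \times \SH(k)_\Q^-$. For the plus part, Morel's equivalence $\SH(k)_\Q^+ \simeq \DM(k)_\Q$ (together with Theorem~\ref{thm:RO}) identifies $\Sigma^\infty_T X_+$ with $M(X)_\Q$ and sends $\Sigma^p_{S^1}\Sigma^r_{\G_m}\Sigma^\infty_T U_+$ to $M(U)(r)[p+r]$. Evaluating the sheaf at a finitely generated point $F/k$ then yields a colimit of motivic cohomology groups of the form $\Hom_{\DM(k)_\Q}(M(U)(r)[p+r], M(X))$; by the Gersten resolution for motivic cohomology (together with Poincaré/Gysin duality for the smooth variety $X$ of dimension $d$), these groups vanish once $r > d$, since classes in weight $r$ are represented by cycles of codimension $r$ on a smooth $k$-variety whose ``$X$-fibered'' codimension is capped by $\dim X = d$. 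For the minus part, the hypothesis that $k$ has finite cohomological dimension forces $k$ to admit no ordering, so $-1$ is a sum of squares in $k$ and the rational Witt ring $W(k)_\Q$ vanishes; by Morel's description of $\SH(k)_\Q^-$ as controlled by Witt-theoretic invariants, the minus-part contribution to $\Pi_{p,r}(\Sigma^\infty_T X_+)_\Q$ is zero.

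The main obstacle is the plus-part motivic cohomology vanishing; cleanly packaging the dimension bound $r > d$ requires combining Morel's identification of rational bidegree with Tate twists and shifts, the Gersten spectral sequence (as already used in the proof of Lemma~\ref{lem:HomotopySheaf}), and the standard fact that $H^{\ast, r}(Y,\Q) = 0$ once $r$ exceeds the Krull dimension in the relevant sense. Once these ingredients are in place, both the plus- and minus-part vanishings combine to give the required rational vanishing, and the lemma follows.
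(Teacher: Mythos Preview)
Your initial reductions match the paper's proof exactly: both arguments pass from $f^t_q$ to $s_q$ via the defining triangle, invoke Remark~\ref{rem:TorGen} to reduce to torsion of $\Pi_{a,b}f^t_q(\Sigma^\infty_T X_+)$ for $b\ge q$, and use the universal property (Lemma~\ref{lem:SliceIsoPi}) to identify these sheaves with $\Pi_{a,b}\Sigma^\infty_T X_+$. The paper then reduces to field values by strict $\A^1$-invariance and simply cites \cite[proposition~6.9(2)]{LevineConv} for the statement that $\Pi_{a,b}\Sigma^\infty_T X_+(F)$ is torsion for $b\ge d+1$. Your proposal instead sketches a direct proof of this last step via Morel's rational splitting $\SH(k)_\Q\simeq\SH(k)_\Q^+\times\SH(k)_\Q^-$, which is a genuinely different (and self-contained) route.

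Your minus-part argument is sound: finite cohomological dimension rules out orderings on $k$ (a real closure would give a closed subgroup $\Z/2\subset\Gal(\bar k/k)$ with infinite cohomological dimension), so $-1$ is a sum of squares, $W(k)$ is $2$-primary torsion, and hence the idempotent $(1-\langle-1\rangle)/2$ vanishes in $\GW(k)_\Q$, killing $\SH(k)_\Q^-$.

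The plus-part, however, is where your write-up is imprecise. The claim ``$H^{*,r}(Y,\Q)=0$ once $r$ exceeds the Krull dimension'' is not correct as stated: higher Chow groups $CH^r(Y,n)$ for $r>\dim Y$ need not vanish (they involve cycles on $Y\times\Delta^n$, not on $Y$). What actually makes the argument work is duality: for $X$ smooth of dimension $d$ one has $M(X)^\vee\cong M^c(X)(-d)[-2d]$, so
\[
\Hom_{\DM(F)_\Q}(\Q(r)[n],M(X_F))\cong \Hom_{\DM(F)_\Q}(M^c(X_F),\Q(d-r)[2d-n]),
\]
and the right-hand side is motivic cohomology (with compact support) in weight $d-r<0$, which vanishes because $\Z(m)=0$ in $\DM^\eff$ for $m<0$ (equivalently, there are no cycles of negative codimension). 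Your phrase ``$X$-fibered codimension capped by $\dim X$'' gestures at this, but the clean mechanism is duality plus negative-weight vanishing, not a Gersten-type bound on $H^{*,r}$. Once you repackage the plus-part this way, your argument goes through and gives an alternative to the paper's black-box citation.
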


\begin{proof} By remark~\ref{rem:TorGen}, it suffices to show that the homotopy sheaves $\Pi_{a,b}f^t_q(\Sigma^\infty_T X_+)$ are torsion for $a\in\Z$, $b\ge q\ge d+1$. For $a, b$ in this range, the universal property of $f^t_q(\Sigma^\infty_T X_+)\to \Sigma^\infty_T X_+$ gives us an isomorphism $\Pi_{a,b}f^t_q(\Sigma^\infty_T X_+)\to \Pi_{a,b}\Sigma^\infty_T X_+$, 
so it suffices to see that $\Pi_{a,b}\Sigma^\infty_T X_+$ is a torsion sheaf for $b\ge d+1$, $a\in \Z$. 

If $y\in Y\in \Sm/k$ is a point, then since the $\Pi_{a,b}\Sigma^\infty_T X_+$ are strictly $\A^1$-invariant sheaves \cite[corollary 6.2.9]{MorelConn}, the restriction $\Pi_{a,b}\Sigma^\infty_T X_+(\sO_{Y,y})\to \Pi_{a,b}\Sigma^\infty_T X_+(k(Y))$
 is injective (by \cite[lemma 3.3.4]{MorelLec}), so it suffices to see that $\Pi_{a,b}\Sigma^\infty_T X_+(F)$ is torsion for all fields finitely generated over $k$, $a\in\Z$ and $b\ge d+1$. This is  \cite[proposition 6.9(2)]{LevineConv}.
\end{proof}

We recall that, by   Pelaez's theorem, $s_q(\mS_k)$ is the motivic Eilenberg-MacLane spectrum of a motive $\pi^\mu_q(\mS_k)(q)[2q]$:
\[
s_q(\mS_k)\cong \EM(\pi^\mu_q(\mS_k)(q)[2q]).
\]
We also know that $\pi^\mu_q(\mS_k)(q)[2q]$ is  in $\Z(q)\otimes\DM^\eff(k)$, hence  
$\pi^\mu_q(\mS_k)$ is in $\DM^\eff(k)$. 

\begin{lem}\label{lem:Torsion} Suppose $k$ has  finite cohomological dimension. Then for $X\in \Sm/k$ of dimension $d$ over $k$,  $\pi^\mu_q(\Sigma^\infty_T X_+)$ is in $\DM^\eff(k)_{tor}$   for $q>d$. In particular, $\pi^\mu_q(\mS_k)$ is  in $\DM^\eff(k)_{tor}$  for $q>0$.
\end{lem}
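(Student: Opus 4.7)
The strategy is to pull back the torsion property for $s_q(\Sigma^\infty_T X_+)$ established in Lemma~\ref{lem:RatlVanish} across Pelaez's identification $s_q(\Sigma^\infty_T X_+) \cong \EM(\pi^\mu_q(\Sigma^\infty_T X_+)(q)[2q])$, using that the right adjoint $\EM$ reflects the property of being torsion on account of its left adjoint $\Z^{tr}$ sending compact generators of $\SH(k)$ to compact generators of $\DM(k)$.

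First, I would record the general fact: if $N\in\DM(k)$ is such that $\EM(N)\in\SH(k)_{tor}$, then $N\in\DM(k)_{tor}$. Indeed, the objects $\Z^{tr}(Y)(b)[a]$ for $Y\in\Sm/k$ and $a,b\in\Z$ form a set of compact generators of $\DM(k)$, and by the remark preceding Lemma~\ref{lem:EM} we have
\[
\Z^{tr}(\Sigma^{a-b}_{S^1}\Sigma^b_{\G_m}\Sigma^\infty_T Y_+)\;=\;\Z^{tr}(Y)(b)[a].
\]
The $(\Z^{tr},\EM)$-adjunction then gives
\[
\Hom_{\DM(k)}(\Z^{tr}(Y)(b)[a],N)\;\cong\;[\Sigma^{a-b}_{S^1}\Sigma^b_{\G_m}\Sigma^\infty_T Y_+,\EM(N)]_{\SH(k)},
\]
which is torsion after tensoring with $\Q$ by the assumption on $\EM(N)$ together with remark~\ref{rem:TorGen}. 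Hence $N\in\DM(k)_{tor}$.

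Next, I would apply this to $N=\pi^\mu_q(\Sigma^\infty_T X_+)(q)[2q]$. By Lemma~\ref{lem:RatlVanish}, $s_q(\Sigma^\infty_T X_+)\in\SH(k)_{tor}$ for $q\ge d+1$, and this is precisely $\EM(N)$, so $N\in\DM(k)_{tor}$. Since Tate twisting and shifting are autoequivalences of $\DM(k)$, they preserve the class $\DM(k)_{tor}$, and thus $\pi^\mu_q(\Sigma^\infty_T X_+)\in\DM(k)_{tor}$.

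Finally, to obtain membership in $\DM^\eff(k)_{tor}$ rather than just $\DM(k)_{tor}$, I use that $\pi^\mu_q(\Sigma^\infty_T X_+)$ lies in $\DM^\eff(k)$ (the very argument recalled in the text for the sphere spectrum applies verbatim to any suspension spectrum) and that the embedding $\DM^\eff(k)\hookrightarrow\DM(k)$ displayed in \S\ref{sec:Betti} is fully faithful and sends a set of compact generators of $\DM^\eff(k)$ (namely the $\Z^{tr}(Y)[a]$) into compact objects of $\DM(k)$; the vanishing of rational $\Hom$s thus descends. Specializing to $X=\Spec k$ gives the second assertion, since $\mS_k=\Sigma^\infty_T(\Spec k)_+$ and $\Spec k$ has dimension $0$, and $k$ being algebraically closed of characteristic zero in the intended application certainly has finite cohomological dimension. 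I do not anticipate a real obstacle here; the one step requiring care is the adjunction argument, where one must verify that $\Z^{tr}$ applied to a compact generating set for $\SH(k)$ gives a compact generating set for $\DM(k)$, which is exactly the content of the indexing computation $\Z^{tr}(\Sigma^a_{S^1}\Sigma^b_{\G_m}\Sigma^\infty_T Y_+) = \Z^{tr}(Y)(b)[a+b]$.
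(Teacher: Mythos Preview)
Your proposal is correct and follows essentially the same route as the paper: use lemma~\ref{lem:RatlVanish} to get $s_q(\Sigma^\infty_T X_+)\in\SH(k)_{tor}$, transfer this to $\pi^\mu_q(\Sigma^\infty_T X_+)\in\DM(k)_{tor}$ via the $(\Z^{tr},\EM)$ adjunction applied to compact generators, and then invoke $\DM^\eff(k)_{tor}=\DM^\eff(k)\cap\DM(k)_{tor}$ (the paper cites remark~\ref{rem:TorGen} directly for this last step rather than rederiving it). Your intermediate passage through the autoequivalence $(-)(q)[2q]$ is a cosmetic variant of the paper's direct adjunction computation; note also that you state the indexing two ways ($\Z^{tr}(Y)(b)[a]$ versus $\Z^{tr}(Y)(b)[a+b]$), the latter being the paper's convention, but either suffices since only the surjectivity onto compact generators matters.
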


\begin{proof} Take $q>d$.  By lemma~\ref{lem:RatlVanish}, $s_q(\Sigma^\infty_T X_+)$ is in $\SH(k)_{tor}$. As  
\[
\Hom_{\DM(k)}(\Z^{tr}(Y)(b)[a],\pi^\mu_q(\Sigma^\infty_T X_+))=\Hom_{\SH(k)}(\Sigma^{a+b-d}_{S^1}\Sigma_{\G_m}^{b-d}\Sigma_T^\infty Y_+, s_q(\Sigma^\infty_T X_+)),
\]
it follows that  $\pi^\mu_q(\Sigma^\infty_T X_+))$ is in $\DM(k)_{tor}$. Since $q\ge0$, $\pi^\mu_q(\Sigma^\infty_T X_+))$ is in $\DM^\eff(k)$, and following remark~\ref{rem:TorGen}, $\DM^\eff(k)_{tor}=\DM^\eff(k)\cap \DM(k)_{tor}$.
\end{proof}

\begin{lem} \label{lem:TorComp} If $\sE$ is in $\SH(k)_{tor}$, then for all $q$,  $f_q^t(\sE)$ and $s_q(\sE)$ are in $\SH(k)_{tor}$, and $\pi^\mu_q(\sE)$ is in $\DM(k)_{tor}$.
\end{lem}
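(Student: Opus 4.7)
My approach is to deduce (1) directly from Lemma~\ref{lem:SliceIsoPi} together with the homotopy-sheaf characterization of $\SH(k)_{tor}$ in Remark~\ref{rem:TorGen}, then obtain (2) by a triangulated-subcategory argument, and finally derive (3) via the $(\Z^{tr},\EM)$ adjunction combined with Pelaez's identification of $s_q$.

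For (1), observe that $f^t_q(\sE)\in\tau^{-\infty,q}\SH(k)$ by construction. By Remark~\ref{rem:TorGen}, membership in $\tau^{-\infty,q}\SH(k)_{tor}=\tau^{-\infty,q}\SH(k)\cap\SH(k)_{tor}$ is detected on the sheaves $\Pi_{a,b}$ with $a\in\Z$ and $b\ge q$ only. For such $a,b$, Lemma~\ref{lem:SliceIsoPi} gives $\Pi_{a,b}(f^t_q(\sE))\cong\Pi_{a,b}(\sE)$, and the latter is a torsion sheaf because $\sE\in\SH(k)_{tor}$ (apply Remark~\ref{rem:TorGen} with $q=-\infty$). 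Hence $f^t_q(\sE)\in\SH(k)_{tor}$. For (2), I would apply the defining distinguished triangle
$$
f^t_{q+1}(\sE)\to f^t_q(\sE)\to s_q(\sE)\to f^t_{q+1}(\sE)[1].
$$
Since $\SH(k)_{tor}$ is a triangulated subcategory — being the intersection of the kernels of the cohomological functors $\Hom_{\SH(k)}(A,-)\otimes\Q$ as $A$ ranges over a set of compact generators — and the first two terms lie in $\SH(k)_{tor}$ by (1), so does $s_q(\sE)$.

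For (3), Pelaez's theorem gives $s_q(\sE)\cong\EM(\pi^\mu_q(\sE)(q)[2q])$. Given a compact generator $\Z^{tr}(U)(b)[a+b]$ of $\DM(k)$ (with $U\in\Sm/k$ and $a,b\in\Z$), the auto-equivalence $(q)[2q]$ together with the $(\Z^{tr},\EM)$ adjunction and the identity $\Z^{tr}(\Sigma^{a'}_{S^1}\Sigma^{b'}_{\G_m}\Sigma^\infty_TU_+)=\Z^{tr}(U)(b')[a'+b']$ recorded in Remark~\ref{rem:TorGen} yield a natural isomorphism
$$
\Hom_{\DM(k)}\bigl(\Z^{tr}(U)(b)[a+b],\pi^\mu_q(\sE)\bigr)\cong\Hom_{\SH(k)}\bigl(\Sigma^{a+q}_{S^1}\Sigma^{b+q}_{\G_m}\Sigma^\infty_TU_+,s_q(\sE)\bigr).
$$
By step (2), the right-hand side is a torsion abelian group for every such generator, and the $\DM$-analog of Remark~\ref{rem:TorGen} then forces $\pi^\mu_q(\sE)\in\DM(k)_{tor}$.

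I do not expect any genuine obstacle: each step is an immediate consequence of a result already in hand, and the only place where care is required is the index bookkeeping in the adjunction computation of (3), where the Tate twist on the $\DM$ side has to be transported to the expected $S^1$- and $\G_m$-suspensions on the $\SH(k)$ side.
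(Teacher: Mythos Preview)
Your proof is correct and follows essentially the same approach as the paper's. The paper argues in the reverse logical order---first reducing $\pi^\mu_q(\sE)\in\DM(k)_{tor}$ to $s_q(\sE)\in\SH(k)_{tor}$ via the adjunction, then reducing that to $f^t_q(\sE)\in\SH(k)_{tor}$ via the defining triangle, and finally verifying the latter using $\Pi_{a,b}(f^t_q\sE)=\Pi_{a,b}(\sE)$ for $b\ge q$ together with Remark~\ref{rem:TorGen}---but the content is identical, and your explicit index bookkeeping in step~(3) is correct.
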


\begin{proof} For $\pi^\mu_q(\sE)$, it suffices to see that $\Hom_{\DM(k)}(\Z^{tr}(X)(b)[a], \pi^\mu_q(\sE))$ is torsion for all $a,b\in\Z$, $X\in\Sm/k$. Via the adjoint property of $\EM$, this is the same as checking that $s_q(\sE)$ is in $\SH(k)_{tor}$. Using the fact that $\SH(k)_{tor}$ is triangulated, it suffices to check that $f_q^t(\sE)$  is in $\SH(k)_{tor}$ for all $q$. By remark~\ref{rem:TorGen}, it suffices to see that 
$f_q^t(\sE)$  is in $\tau^{-\infty,q}\SH(k)_{tor}$ and for this it suffices to see that the homotopy sheaves $\Pi_{a,b}(f_q^t(\sE))$ are torsion for all $a\in\Z$, $b\ge q$. As $\Pi_{a,b}(f_q^t(\sE))=\Pi_{a,b}(\sE)$ for $b\ge q$, this follows from our assumption that $\sE$ is in $\SH(k)_{tor}$, together with remark~\ref{rem:TorGen}.
\end{proof}
 
For the remainder of this section, we assume that $k$ is algebraically closed and admits an embedding $\sigma:k\hookrightarrow \C$. We write $\Re$ for $\Re_B^\sigma:\SH(k)\to \SH$. 

\begin{prop} \label{prop:SV}  The map
$\Re_*:\Pi_{n,0}(s_q(\mS_k))(k)\to \pi_n(\Re(s_q(\mS_k)))$
is an isomorphism for all $q$ and $n$.
\end{prop}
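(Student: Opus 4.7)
The plan is to split into three cases according to the sign of $q$, using Voevodsky's computation of $s_0\mS_k$ together with Pelaez's structural result for the higher slices.

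First, for $q<0$ the statement is trivial. Since $\mS_k\in \SH^\eff(k)=\tau^{-\infty,0}\SH(k)$ and this subcategory is contained in $\tau^{-\infty,q}\SH(k)$ for $q\le 0$, the canonical map $f^t_q\mS_k\to \mS_k$ is an isomorphism for all $q\le 0$. Hence $s_q\mS_k=0$ for $q<0$ and both sides of the claimed isomorphism vanish.

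Next, for $q>0$, I would invoke Pelaez's theorem (recalled just before the statement) to write
\[
s_q(\mS_k)\cong \EM\bigl(\pi^\mu_q(\mS_k)(q)[2q]\bigr),
\]
with $\pi^\mu_q(\mS_k)\in \DM^\eff(k)$. By lemma~\ref{lem:Torsion} (applied to $X=\Spec k$, which has dimension zero, so $q>0$ suffices) the motive $\pi^\mu_q(\mS_k)$ lies in $\DM^\eff(k)_{tor}$. Since $\DM^\eff(k)_{tor}$ is a localizing subcategory of $\DM^\eff(k)$, it is closed under $(q)[2q]$ for $q\ge 0$, so $\pi^\mu_q(\mS_k)(q)[2q]\in \DM^\eff(k)_{tor}$. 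The isomorphism then follows at once from corollary~\ref{cor:SV}.

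The case $q=0$ is the one that requires a small separate argument, since corollary~\ref{cor:SV} gives only torsion coefficients. By Voevodsky's computation $s_0\mS_k\cong M\Z = \EM(\Z^{tr}(\Spec k))$. On the motivic side, adjunction with $\Z^{tr}$ gives
\[
\Pi_{n,0}(M\Z)(k)=\Hom_{\DM(k)}\bigl(\Z^{tr}(\Spec k)[n],\Z\bigr),
\]
which for $k$ algebraically closed is $\Z$ if $n=0$ and $0$ otherwise. By proposition~\ref{prop:RealEM} applied to $X=\Spec k$, $\Re(M\Z)\cong (\Sigma^\infty \pt_+)^\tr_\eff = H\Z\cong EM(\Z)$, whose homotopy groups agree. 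The Betti realization is compatible with the unit maps on both sides, so the induced map sends the generator to the generator and is an isomorphism. The main (though mild) obstacle in the whole proof is really this $q=0$ step, since one must bypass the torsion hypothesis of Suslin--Voevodsky; everything else reduces cleanly to lemma~\ref{lem:Torsion} and corollary~\ref{cor:SV}.
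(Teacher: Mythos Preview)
Your proof is correct and follows essentially the same route as the paper's: the same three-case split on $q$, with $q<0$ trivial, $q=0$ handled via Voevodsky's identification $s_0\mS_k\cong M\Z$ together with proposition~\ref{prop:RealEM} and a direct computation of both sides, and $q>0$ reduced to corollary~\ref{cor:SV} via lemma~\ref{lem:Torsion}. The only cosmetic difference is that you make explicit the closure of $\DM^\eff(k)_{tor}$ under $(q)[2q]$, which the paper leaves implicit.
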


\begin{proof} We note that $\mS_k$ is in $\SH^\eff(k)=\tau^{-\infty,0}\SH(k)$, hence for $q<0$, $s_q(\mS_k)=0$. For $q=0$, $s_0(\mS_k)\cong M\Z$ by Voevodsky's theorem \cite{VoevS0}, hence $\Re(s_0(\mS_k))=H\Z$ by proposition~\ref{prop:RealEM}. We thus have 
\[
\Pi_{n,0}s_0(\mS_k)(k)=\Pi_{n,0}M\Z(k)=H^{Sus}_n(\Spec k, \Z)=\begin{cases}0&\text{ for }n\neq0\\\Z&
\text{ for }n=0.\end{cases}
\]
Similarly, 
\[
\pi_n H\Z=\begin{cases}0&\text{ for }n\neq0\\\Z
&\text{ for }n=0.\end{cases}
\]
The unit in $\Pi_{0,0}M\Z(k)=H_0^{Sus}(\Spec k,\Z)=\Z$ is induced by the  unit map $S^0_k=\Sym^1_\bullet S^0_k\to \Sym^\infty_\bullet S^0_k=M\Z_0$ which  goes over to the unit in $H\Z=An_{\P^1}^*M\Z$ under the Betti realization, and therefore the Betti realization induces an isomorphism $\Pi_{0,0}M\Z(k)\to \pi_0H\Z$. This handles the cases $q\le 0$.

For $q>0$, $\pi^\mu_q$ is in $\DM(k)^\eff_{tor}$ by  lemma~\ref{lem:Torsion}. As $s_q(\mS_k)=\EM(\pi^\mu_q(q)[2q])$,  corollary~\ref{cor:SV} shows that 
\[
\Re_*: \Pi_{n,0}(s_q(\mS_k))(k)\to  \pi_n(\Re_B(s_q(\mS_k)))
\]
is  an isomorphism for all $n\in\Z$.
\end{proof}

\begin{lem} \label{lem:FFReduction} Suppose that $Re$ induces an isomorphism
\[
\Re_*:\Pi_{n,0}(\mS_k)(k)\to \pi_n(\mS)
\]
for all $n$. Then  the constant presheaf functor $c:\SH\to \SH(k)$ is fully faithful.
\end{lem}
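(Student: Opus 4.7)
The plan is to deduce full faithfulness of $c$ from the hypothesis via a density argument based on the fact that the classical sphere spectrum $\mS$ is a compact generator of $\SH$ and that $c$ preserves coproducts. Since $c$ is induced by a left Quillen functor (as noted in the referee's remark), its derived version $c:\SH\to\SH(k)$ is exact, preserves arbitrary coproducts, and satisfies $c(\mS)\cong\mS_k$. The Betti realization gives a left splitting $\Re\circ c\simeq\id_{\SH}$, so the composite
\[
\pi_n(\mS)\xrightarrow{c_*}\Pi_{n,0}(\mS_k)(k)\xrightarrow{\Re_*}\pi_n(\mS)
\]
is the identity; as $\Re_*$ is an isomorphism by hypothesis, $c_*$ is one as well. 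Equivalently, $c$ induces an isomorphism $[\mS[n],\mS]_{\SH}\xrightarrow{\cong}[\mS_k[n],\mS_k]_{\SH(k)}$ for every $n\in\Z$.

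Fix $F\in\SH$ and let $\sA_F\subset\SH$ be the full subcategory of objects $E$ for which $c_*:[E,F]_{\SH}\to[c(E),c(F)]_{\SH(k)}$ is an isomorphism. The five-lemma makes $\sA_F$ closed under distinguished triangles, and the fact that $c$ preserves coproducts together with the observation that $[-,F]$ and $[-,c(F)]$ carry coproducts to products implies $\sA_F$ is closed under arbitrary coproducts. Hence $\sA_F$ is a localizing subcategory of $\SH$. Since $\mS$ compactly generates $\SH$, it suffices to show that every shift $\mS[n]$ lies in $\sA_F$ for every $F$.

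To handle this, run the dual density argument on the target: let $\sB\subset\SH$ be the full subcategory of $F$ such that $c_*:[\mS[n],F]\to[\mS_k[n],c(F)]$ is an isomorphism for every $n\in\Z$. The compactness of $\mS$ in $\SH$ and of $\mS_k$ in $\SH(k)$, together with the fact that $c$ preserves coproducts, imply that $\sB$ is closed under arbitrary coproducts; the five-lemma gives closure under distinguished triangles. The first step shows $\mS\in\sB$. Thus $\sB$ is a localizing subcategory of $\SH$ containing the compact generator $\mS$, so $\sB=\SH$, which completes the argument.

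The proof is essentially formal, so the main (mild) obstacle is to make the model-theoretic inputs precise: namely, to exhibit $c$ as a left Quillen functor (so that it preserves coproducts and distinguished triangles at the level of homotopy categories), to identify $c(\mS)$ canonically with $\mS_k$, and to record that $\mS$ (respectively $\mS_k$) is a compact generator of $\SH$ (respectively is compact in $\SH(k)$). Once these inputs are in place, the two stages of density propagate the identification from the single case $(E,F)=(\mS,\mS)$ to all pairs.
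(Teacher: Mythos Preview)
Your proof is correct and follows essentially the same approach as the paper's own argument: both use $\Re\circ c\simeq\id$ together with the hypothesis to get the isomorphism for $(E,F)=(\mS[n],\mS)$, then run two localizing-subcategory arguments (one in the target variable using compactness of $\mS$ and $\mS_k$, one in the source variable) to propagate to all pairs. The only cosmetic difference is that the paper treats the $F$-variable step first and the $E$-variable step second, whereas you introduce them in the reverse order before observing that the $F$-step must come first; the content is identical.
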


\begin{proof} We first recall the construction of $c:\SH\to \SH(k)$. 
Let $c:\Spc_\bullet\to\Spc_\bullet(c)$ be the constant presheaf functor; note that $c$ is a monoidal functor and is left adjoint to the functor $ev_k$, $\sX\mapsto \sX(k)$. Extend $c$ to $c_T:\Spt\to \Spt_T(k)$ by sending a spectrum $E=(E_0, E_1,\ldots)$ to the $T$-spectrum $(c(E_0), \Sigma_{\G_m} c(E_1),\ldots, \Sigma^n_{\G_m}c(E_n),\ldots)$ with bonding maps  given as the composition
\[
\Sigma^n_{\G_m}c(E_n)\wedge T=c(E_n)\wedge\G_m^{\wedge n}\wedge S^1\wedge\G_m\cong
\Sigma^{n+1}_{\G_m}c(E_n\wedge S^1)\xrightarrow{\Sigma^{n+1}_{\G_m}(c(\epsilon_{E,n}))}
\Sigma^{n+1}_{\G_m}c(E_{n+1}).
\]
$c_T$ is a left Quillen functor with right adjoint the functor sending a $T$-spectrum $\sE=(\sE_0,\sE_1,\ldots)$ to the spectrum $ev^T_k\sE:=(\sE_0(k), \Omega_{\G_m}(\sE_1)(k),\ldots, \Omega^n_{\G_m}(\sE_n)(k),\ldots)$; bonding maps are defined by applying $ev_k$ to 
\[
\Omega^n_{\G_m}(\sE_n)\wedge S^1\to \Omega^{n+1}_{\G_m}(\sE_n\wedge S^1\wedge \G_m)\xrightarrow{\Omega^{n+1}_{\G_m}(\epsilon_{\sE,n})}\Omega^{n+1}_{\G_m}(\sE_{n+1}).
\]
The functor $c:\SH\to \SH(k)$ is by definition $Lc_T$ and is thus exact and compatible with small coproducts.

Our hypothesis on $\Re_*$ can be expressed as saying that 
\[
\Re_*:[\Sigma^n_{S^1}\mS_k, \mS_k]_{\SH(k)}\to [\Re(\Sigma^n_{S^1}\mS_k),\Re(\mS_k)]_{\SH}
\]
is an isomorphism for all $n$; since $\Re_*\circ c\cong \id$ and $c(\mS)\cong\mS_k$, this shows that $c_*:[\Sigma^n\mS, \mS]_{\SH}\to [c(\Sigma^n\mS), c(\mS)]_{\SH(k)}$ is an isomorphism for all $n$.

Let $\sR\subset \SH$ be the full subcategory with objects $F$ such that $c_*:[\Sigma^n\mS, F]_{\SH}\to [c(\Sigma^n\mS), c(F)]_{\SH(k)}$ is an isomorphism for all $n$. Both $\Sigma^n\mS$ and $c(\Sigma^n\mS)\cong\Sigma^n_{S^1}\mS_k$ are compact and $c$ is compatible with small coproducts, hence $\sR$ is a localizing subcategory of $\SH$. As $\SH$ is generated as a localizing category by $\mS$, it follows that $\sR=\SH$. 

Now take $F\in \SH$ and let $\sL_F\subset \SH$ be the  full subcategory with objects $E$ such that $c_*:[E, F]_{\SH}\to [c(E), c(F)]_{\SH(k)}$ is an isomorphism. Clearly $\sL_F$ is a thick subcategory and is closed under small coproducts, hence is a localizing subcategory of $\SH$; as we have already seen that $\sL_F$ contains $\mS$, this shows that $\sL_F=\SH$, completing the proof.
\end{proof}

\begin{lem} \label{lem:FFReduction2} Let $k$ be an algebraically closed field of characteristic zero. Suppose that 
 the constant presheaf functor $c_L:\SH\to \SH(L)$ is fully faithful for all algebraically closed subfields of $k$ which have finite transcendence dimension over $\Q$.  Then  the constant presheaf functor $c:\SH\to \SH(k)$ is fully faithful.
\end{lem}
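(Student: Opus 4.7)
By lemma~\ref{lem:FFReduction}, it suffices to prove that the map $c_*\colon \pi_n(\mS) \to \Pi_{n,0}(\mS_k)(k)$ is an isomorphism for every $n \in \Z$. Since $k$ has characteristic zero and is algebraically closed, it is the filtered union of its algebraically closed subfields $L_\alpha$ of finite transcendence degree over $\Q$. The plan is to express $\Pi_{n,0}(\mS_k)(k)$ as a filtered colimit of $\Pi_{n,0}(\mS_{L_\alpha})(L_\alpha)$ via a continuity property for $\SH(-)$, and then invoke the hypothesis fiberwise.

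More precisely, the first step is to apply the continuity result of Ayoub (proposition A.1.2 of \cite{AyoubRigid}), which was already cited in the introduction's footnote, to the filtered system $\{L_\alpha\}$. For each inclusion $L_\alpha \hookrightarrow L_\beta$ (and $L_\alpha \hookrightarrow k$) one has the base-change left Quillen functor $p_{\alpha\beta}^*\colon \Spt_T(L_\alpha) \to \Spt_T(L_\beta)$ (respectively $p_\alpha^*\colon \Spt_T(L_\alpha) \to \Spt_T(k)$), and these send $\mS_{L_\alpha}$ to $\mS_{L_\beta}$ (respectively $\mS_k$) and commute with the $S^1$-suspension. Ayoub's result then yields, for the compact objects $\Sigma^n_{S^1}\mS_{L_\alpha}$ and $\mS_{L_\alpha}$, the identification
\[
\Pi_{n,0}(\mS_k)(k) \;=\; [\Sigma^n_{S^1}\mS_k,\,\mS_k]_{\SH(k)} \;\cong\; \colim_\alpha\, [\Sigma^n_{S^1}\mS_{L_\alpha},\,\mS_{L_\alpha}]_{\SH(L_\alpha)} \;=\; \colim_\alpha\, \Pi_{n,0}(\mS_{L_\alpha})(L_\alpha).
\]

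The second step is to observe that the constant-presheaf functor $c$ factors through each $\SH(L_\alpha)$: the functor $c\colon \SH \to \SH(k)$ coincides with the composition $p_\alpha^* \circ c_{L_\alpha}$ of $c_{L_\alpha}\colon \SH \to \SH(L_\alpha)$ with base change. Consequently, $c_*\colon \pi_n(\mS)\to \Pi_{n,0}(\mS_k)(k)$ factors through every $c_{L_\alpha,*}\colon \pi_n(\mS) \to \Pi_{n,0}(\mS_{L_\alpha})(L_\alpha)$, and the transition maps in the colimit system are induced by base change and are compatible with $c_*$. By the hypothesis of the lemma, each $c_{L_\alpha,*}$ is an isomorphism; therefore the transition maps in the filtered system $\{\Pi_{n,0}(\mS_{L_\alpha})(L_\alpha)\}_\alpha$ are all isomorphisms, the colimit equals $\pi_n(\mS)$, and we conclude that $c_*\colon \pi_n(\mS)\to \Pi_{n,0}(\mS_k)(k)$ is an isomorphism.

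The principal obstacle is the first step: verifying that Ayoub's continuity result applies to the present situation, namely that morphisms in $\SH(k)$ out of the compact object $\Sigma^n_{S^1}\mS_k$ into $\mS_k$ really are computed as the filtered colimit over $\alpha$ of the corresponding morphism sets in $\SH(L_\alpha)$, with transition maps induced by base change. Once this continuity statement is in hand the remainder of the argument is formal, amounting to functoriality of $c$ together with the observation that a filtered colimit of isomorphisms is an isomorphism.
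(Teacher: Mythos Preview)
Your argument is correct in substance, with one caveat about the citation in the first sentence. Lemma~\ref{lem:FFReduction} as stated in the paper has as hypothesis that $\Re_*$ is an isomorphism, and it is stated under the standing assumption that $k$ admits an embedding $\sigma:k\hookrightarrow\C$ (so that $\Re=\Re_B^\sigma$ is even defined). An arbitrary algebraically closed field of characteristic zero need not admit such an embedding, so you cannot invoke the lemma as written. What you actually need is the implication ``if $c_*:\pi_n(\mS)\to\Pi_{n,0}(\mS_k)(k)$ is an isomorphism for all $n$, then $c$ is fully faithful''; this is exactly what the density argument in the \emph{proof} of lemma~\ref{lem:FFReduction} establishes (the localizing-subcategory argument with $\sR$ and $\sL_F$), and that argument makes no reference to $\Re$ once one has the isomorphism for $c_*$. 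You should cite the proof of lemma~\ref{lem:FFReduction} rather than the lemma itself.

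The paper's own proof avoids this detour entirely: it applies Ayoub's continuity result directly to $\Hom_{\SH(k)}(c(E),c(F))$ for \emph{arbitrary} $E,F\in\SH$, obtaining
\[
\colim_\alpha\Hom_{\SH(L_\alpha)}(c_{L_\alpha}(E),c_{L_\alpha}(F))\cong\Hom_{\SH(k)}(c(E),c(F)),
\]
and then observes that each term in the colimit is canonically $\Hom_{\SH}(E,F)$ (with identity transition maps) because each $c_{L_\alpha}$ is fully faithful by hypothesis. Your route trades a stronger use of Ayoub's result (you only need it for the compact objects $\Sigma^n_{S^1}\mS$ and $\mS$) for the extra reduction through the density argument; the paper's route is shorter but asks more of the continuity input.
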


\begin{proof} Let $L\subset L'$ be subfields of $k$  and let $f_{L'/L}:\Spec L'\to \Spec L$ be the corresponding morphism of schemes. We have the diagram
\[
\xymatrix{
\SH\ar[r]^{c_{L'}}\ar[dr]_{c_L}&\SH(L')\\
&\SH(L),\ar[u]_{f_{L'/L}^*}}
\]
which is commutative up to natural isomorphism. In particular, $f_{L'/L}^*c_L(E)\cong c_{L'}(E)$ for each $E\in\SH$. 

Choose a set of algebraically closed subfields $L_\alpha$ of $k$ of finite transcendence dimension over $\Q$, indexed by a well-ordered set $A$, with $k=\cup_\alpha L_\alpha$. Take $E, F\in\SH$.  By \cite[proposition A.1.2]{AyoubRigid}, the  map
\[
\colim_\alpha \Hom_{\SH(L_\alpha)}(c_{L_\alpha}(E), c_{L_\alpha}(F))\to  \Hom_{\SH(k)}(c(E), c(F))
\]
induced by the system of functors $f_{k/L_\alpha}^*$ is an isomorphism, from which the lemma follows directly.
\end{proof}

Combining lemma~\ref{lem:FFReduction} and lemma~\ref{lem:FFReduction2},  our main theorem~\ref{IntroThm:Main} follows from 

\begin{thm} For $k$ algebraically closed of characteristic zero, with embedding $\sigma:k\hookrightarrow\C$, the map $\Re_*:\Pi_{n,0}(\mS_k)(k)\to \pi_n(\mS)$ is an isomorphism for all $n$. 
\end{thm}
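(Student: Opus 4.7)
The plan is to compare two spectral sequences associated to Voevodsky's slice tower
\[
\ldots\to f^t_{q+1}\mS_k\to f^t_q\mS_k\to\ldots\to f^t_0\mS_k=\mS_k,
\]
one living in $\SH(k)$ and one obtained by applying $\Re$. On the motivic side, the tower produces a spectral sequence with $E_1^{p,q}=\Pi_{-p-q,0}(s_q\mS_k)(k)$ abutting to $\Pi_{-p-q,0}(\mS_k)(k)$; its strong convergence was established in \cite{LevineConv}. Applying $\Re$ to the tower and to the triangles $f^t_{q+1}\mS_k\to f^t_q\mS_k\to s_q\mS_k\to f^t_{q+1}\mS_k[1]$ produces an analogous tower in $\SH$ with layers $\Re(s_q\mS_k)$, giving a topological spectral sequence with $E_1^{p,q}=\pi_{-p-q}\Re(s_q\mS_k)$ and candidate abutment $\pi_{-p-q}(\Re(\mS_k))=\pi_{-p-q}(\mS)$.

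The first task is to establish strong convergence on the topological side. Morel's $\A^1$-connectedness theorem implies that $\mS_k$ is topologically $(-1)$-connected. Theorem~\ref{thm:Connected} then gives that $\Re(f^t_q\mS_k)$ is $(q-1)$-connected in $\SH$, so that for each fixed $n$ the map $\Re(f^t_q\mS_k)\to \Re(\mS_k)=\mS$ induces an isomorphism on $\pi_m$ once $q$ is large enough relative to $m$. In particular only finitely many layers $\Re(s_q\mS_k)$ contribute to a given $\pi_n(\mS)$, the groups $\pi_n(\holim_q\Re(f^t_q\mS_k))$ vanish, and there are no $\lim^1$ obstructions, so the topological spectral sequence converges strongly to $\pi_{-p-q}(\mS)$.

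The second task is to compare the two spectral sequences. Since $\Re$ is exact and functorial it maps the motivic tower to the topological tower, yielding a map of spectral sequences. On $E_1$-terms this map is
\[
\Re_*:\Pi_{n,0}(s_q\mS_k)(k)\to \pi_n\bigl(\Re(s_q\mS_k)\bigr),
\]
which is an isomorphism for all $n$ and $q$ by proposition~\ref{prop:SV}. By the standard comparison theorem for strongly convergent spectral sequences, the induced map on abutments $\Re_*:\Pi_{n,0}(\mS_k)(k)\to \pi_n(\mS)$ is an isomorphism for every $n$, which is the statement of the theorem.

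The main work has already been done in the preceding sections. The motivic strong convergence is \cite{LevineConv}; the topological strong convergence reduces to the connectedness estimate of theorem~\ref{thm:Connected}, whose proof rests on the two-variable Postnikov analysis of \S\ref{sec:Postnikov2}--\S\ref{sec:Connected} and on the compatibility of $\Re$ with homotopy colimits; and the slice-by-slice isomorphism (proposition~\ref{prop:SV}) reduces via Voevodsky's theorem $s_0\mS_k=M\Z$ and the torsion statement of lemma~\ref{lem:Torsion} to the Suslin--Voevodsky theorem in the form of corollary~\ref{cor:SV}. The only remaining care is bookkeeping: checking that the spectral sequence comparison map really comes from a morphism of towers so that the isomorphism on $E_1$-pages forces an isomorphism on abutments, which is what strong convergence on both sides guarantees.
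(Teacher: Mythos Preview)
Your argument is correct and follows the same overall strategy as the paper: compare the motivic slice spectral sequence with its Betti realization, invoke \cite{LevineConv} for strong convergence on the motivic side, theorem~\ref{thm:Connected} for strong convergence on the topological side, and proposition~\ref{prop:SV} for the isomorphism on $E_1$-terms.

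There is one small expository slip. You write that the connectedness of $\Re(f^t_q\mS_k)$ implies the map $\Re(f^t_q\mS_k)\to\mS$ becomes an isomorphism on $\pi_m$ for $q$ large; that is backwards. What $(q-1)$-connectedness gives is $\pi_m\Re(f^t_q\mS_k)=0$ for $m<q$, so these groups \emph{vanish} for $q$ large, which is exactly what you need (and what you then correctly say in the next sentence about $\holim$ and $\lim^1$).

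The one genuine organizational difference from the paper is that the paper peels off the $0$th slice by hand: it first treats $n=0$ directly via Morel's identification $\Pi_{0,0}\mS_k(k)\cong\GW(k)=\Z$, then uses the distinguished triangle $f^t_1\mS_k\to\mS_k\to s_0\mS_k\cong M\Z$ to reduce to showing $\Re_*$ is an isomorphism on $\Pi_{*,0}f^t_1\mS_k(k)$, and only then runs the spectral sequence comparison for the tower above $f^t_1\mS_k$. Your route is more economical: since proposition~\ref{prop:SV} already handles $q=0$ (via Voevodsky's $s_0\mS_k\cong M\Z$ and the unit argument), there is no need to single it out, and the full tower from $f^t_0\mS_k=\mS_k$ works in one pass. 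The paper's detour has the expository merit of making the role of Morel's $\GW$ computation explicit, but it is not logically required.
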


\begin{proof} First we consider the case $n=0$.  By Morel's theorem \cite[lemma 3.10, corollary 6.41]{MorelA1}, $\Pi_{0,0}(\mS_k)(k)=\GW(k)$, which is isomorphic to $\Z$ via the dimension function, as $k$ is algebraically closed. This shows that the map $\mS_k\to s_0(\mS_k)\cong M\Z$ induces an isomorphism 
\[
\Pi_{0,0}(\mS_k)(k)\to \Pi_{0,0}M\Z(k)=\Z
\]
Similarly, the first Postnikov layer for $\mS$, $\mS\to H\Z$, arises from the isomorphism $\pi_0(\mS)\cong \Z$. 
This gives us the commutative diagram
\[
\xymatrix{
\Pi_{0,0}\mS_k(k)\ar[d]_{\Re_*}\ar@{=}[r]&\Pi_{0,0}s_0\mS_k(k)\ar[d]_{\Re_*}\ar@{=}[r]&\Z\ar@{=}[d]&\\
\pi_0(\mS) \ar@{=}[r]&\pi_0(H\Z)\ar@{=}[r]&\Z
}
\]
from which it follows that $\Re_*:\Pi_{0,0}(\mS_k)(k)\to \pi_0(\mS)$ is an isomorphism.

Next, consider the slice tower for $\mS_k$. We have the distinguished triangle
\[
f^t_1\mS_k\to \mS_k\to s_0\mS_k\to  f^t_1\mS_k[1]
\]
with $s_0\mS_k\cong M\Z$.

We have already seen that the map $\Pi_{0,0}\mS_k(k)\to \Pi_{0,0}M\Z(k)=\Z$
is an isomorphism. Using Morel's connectedness theorem \cite[theorem 4.2.10]{MorelLec} plus \cite[lemma 4.3.11]{MorelLec}, we see that $\mS_k$ is topologically -1 connected, hence $f^t_1\mS_k$ is also topologically -1 connected (proposition~\ref{prop:SliceConn}(1)). From the long exact sequence
\[
\ldots\to \Pi_{a+1,0}M\Z(k)\to \Pi_{a,0}f^t_1\mS_k(k)\to  \Pi_{a,0}\mS_k(k)\to \Pi_{a,0}M\Z(k)\to \ldots
\]
and the fact that  $\Pi_{a,0}M\Z(k)=H^{-a}(k,\Z(0))=0$ for $a\neq0$, we see that 
$\Pi_{a,0}f^t_1\mS_k(k)= \Pi_{a,0}\mS_k(k)$ for $a\neq0$ and  $\Pi_{0,0}f^t_1\mS_k(k)= 0$.
Finally, by proposition~\ref{prop:RealEM}, $\Re(M\Z)$ is the usual Eilenberg-MacLane spectrum $H\Z$, hence
\[
\pi_a(\Re(M\Z))=\begin{cases}0&\text{ for }a\neq 0\\\Z&\text{ for }a= 0.\end{cases}
\]
As $\Re$ is exact, it suffices to show that
\[
\Re_*:\Pi_{a,0}f^t_1\mS_k(k)\to \pi_{a}(Re(f^t_1\mS_k))
\]
is an isomorphism for all $a$.

For this we use the spectral sequences associated to the slice tower
\[
\ldots\to f^t_{n+1}\mS_k\to f^t_n\mS_k\to\ldots\to f^t_1\mS_k
\]
and its Betti realization
\[
\ldots\to \Re(f^t_{n+1}\mS_k)\to \Re(f^t_n\mS_k)\to\ldots\to \Re(f^t_1\mS_k).
\]
By \cite[theorem 4]{LevineConv}, the first tower gives a strongly convergent spectral sequence
\[
E^2_{p,q}(I)=\Pi_{p+q,0}(s_{q}\mS_k)(k)\Longrightarrow \Pi_{p+q,0}f^t_1\mS_k(k).
\]
By theorem~\ref{thm:Connected}, $Re(f^t_q(\mS_k))$ is $q-1$ connected, hence the Betti tower gives us the strongly convergent spectral sequence
\[
E^2_{p,q}(II)=\pi_{p+q}(\Re(s_{q}(\mS_k)))\Longrightarrow \pi_{p+q}\Re(f^t_1\mS_k).
\]
As $\Re_*$ gives a map of spectral sequences  $E(I)\to E(II)$,  it suffices to show that
\[
\Re_*:\Pi_{n,0}(s_{q}\mS_k)(k)\to \pi_n(\Re(s_{q}\mS_k))
\]
is an isomorphism for all $q>0$ and all $n$. This is proposition~\ref{prop:SV}.
\end{proof}

\section{The Suslin-Voevodsky theorem for homotopy}\label{sec:SVHpty}

\begin{thm} \label{thm:Embedding} Suppose $k$ is algebraically closed of characteristic zero, with an embedding $\sigma:k\hookrightarrow \C$. Then for $\sE\in\SH^\eff(k)_{tor}$, the map
\[
\Re_{B*}^\sigma:\Pi_{n,0}\sE(k)\to \pi_n(\Re_B^\sigma)
\]
is an isomorphism for all $n\in\Z$.
\end{thm}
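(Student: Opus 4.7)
The plan is to mimic the proof of Theorem~\ref{IntroThm:Main} via the slice tower of $\sE$ and its Betti realization, but the argument is cleaner here because $\sE$ is already both effective and torsion, so every slice $s_q\sE$ is directly accessible via Corollary~\ref{cor:SV}; in particular, no \emph{ad hoc} treatment of the $0$th slice (as was needed for $\mS_k$) will be required.

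First I would observe that, since $\sE\in\SH^\eff(k)=\tau^{-\infty,0}\SH(k)$, Morel's connectedness theorem (together with \cite[lemma 4.3.11]{MorelLec}) shows that $\sE$ is topologically $(-1)$-connected; hence by Proposition~\ref{prop:SliceConn}(1), each $f^t_q\sE$ is topologically $(-1)$-connected as well, and $s_q\sE=0$ for $q<0$ while $f^t_0\sE\cong\sE$. Next I would invoke Lemma~\ref{lem:TorComp} to conclude that, because $\sE\in\SH(k)_{tor}$, each $f^t_q\sE$ and $s_q\sE$ also lie in $\SH(k)_{tor}$, and that $\pi^\mu_q(\sE)\in\DM(k)_{tor}$; since $s_q\sE\in\Sigma^q_T\SH^\eff(k)$, we actually have $\pi^\mu_q(\sE)\in\DM^\eff(k)_{tor}$. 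Applying Corollary~\ref{cor:SV} then yields that the Betti realization induces an isomorphism $\Re_{B*}^\sigma:\Pi_{n,0}(s_q\sE)(k)\iso\pi_n(\Re_B^\sigma(s_q\sE))$ for every $n\in\Z$ and every $q\ge 0$.

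To promote this slice-wise isomorphism to the full statement, I would compare the two spectral sequences arising from the slice tower $\cdots\to f^t_{q+1}\sE\to f^t_q\sE\to\cdots\to f^t_0\sE=\sE$ and its Betti realization. On the motivic side, \cite[theorem 4]{LevineConv} (applied to a topologically $(-1)$-connected object) furnishes a strongly convergent spectral sequence
\[
E^2_{p,q}(I)=\Pi_{p+q,0}(s_q\sE)(k)\Longrightarrow \Pi_{p+q,0}\sE(k).
\]
On the topological side, Theorem~\ref{thm:Connected} applied to $\sE$ gives that $\Re_B^\sigma(f^t_q\sE)$ is $(q-1)$-connected, so the filtered tower $\{\Re_B^\sigma(f^t_q\sE)\}_q$ yields a strongly convergent spectral sequence
\[
E^2_{p,q}(II)=\pi_{p+q}(\Re_B^\sigma(s_q\sE))\Longrightarrow \pi_{p+q}(\Re_B^\sigma\sE).
\]
The Betti realization induces a morphism $E(I)\to E(II)$ of spectral sequences that is an isomorphism on $E^2$-terms by Corollary~\ref{cor:SV}; strong convergence on both sides then forces an isomorphism on abutments, completing the proof.

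The main obstacle is the simultaneous strong convergence of both spectral sequences, but this is precisely what has been set up by the two connectedness results assembled earlier: the motivic convergence from \cite{LevineConv} and the topological connectedness Theorem~\ref{thm:Connected}. Once these are in hand, the argument reduces cleanly to the slice-by-slice Suslin--Voevodsky comparison of Corollary~\ref{cor:SV}.
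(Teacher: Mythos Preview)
Your overall strategy is the paper's own, but there is a genuine gap in the first step. You assert that any $\sE\in\SH^\eff(k)$ is topologically $(-1)$-connected by Morel's connectedness theorem. This is false: $\SH^\eff(k)=\tau^{-\infty,0}\SH(k)$ is a \emph{localizing} subcategory of $\SH(k)$, hence closed under $\Sigma_{S^1}^{-1}$, so for example $\Sigma_{S^1}^{-m}\Sigma^\infty_TX_+/N$ lies in $\SH^\eff(k)_{tor}$ for every $m\ge0$ but is not topologically $(-1)$-connected once $m>0$. Morel's theorem applies to (suspension spectra of) connected simplicial presheaves, not to arbitrary effective objects. Without some connectedness for $\sE$, neither Theorem~\ref{thm:Connected} nor the convergence result \cite[theorem~4]{LevineConv} is available, so both spectral sequences lose their strong convergence and the comparison collapses. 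A second, related point: the hypothesis used in the paper to invoke \cite[theorem~4]{LevineConv} is that $\sE\in\SH_\fin(k)$, not merely that $\sE$ is topologically bounded below.

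The paper repairs this with a preliminary reduction you have omitted. Since $\Re_B^\sigma$, $\Pi_{n,0}(-)(k)$ and $\pi_n$ are all compatible with small coproducts, the full subcategory of $\SH^\eff(k)_{tor}$ on which the statement holds is localizing; by Remark~\ref{rem:TorGen} this category is generated by the compact objects $\Sigma^a_{S^1}\Sigma^b_{\G_m}\Sigma^\infty_TX_+/N$ with $a\in\Z$, $b\ge0$, $N>1$, so it suffices to treat $\sE\in\SH_\fin(k)\cap\SH^\eff(k)_{tor}$. For such $\sE$, \cite[proposition~6.9(3)]{LevineConv} supplies an integer $N$ with $\sE$ topologically $(N-1)$-connected, and \cite[theorem~4]{LevineConv} then gives the strong convergence of the motivic slice spectral sequence. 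From this point on your argument goes through verbatim: Theorem~\ref{thm:Connected} yields $(q+N-1)$-connectedness of $\Re_B^\sigma(f^t_q\sE)$, Lemma~\ref{lem:TorComp} puts $\pi^\mu_q(\sE)$ in $\DM^\eff(k)_{tor}$, and Corollary~\ref{cor:SV} identifies the $E_2$-terms.
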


\begin{proof} The exact functor $\Re_{B*}^\sigma$, the homotopy sheaves and homotopy groups are all compatible  with small coproducts. Thus, the full subcategory of $\SH^\eff(k)$ of objects $\sE$ for which the theorem holds is a localizing subcategory. Furthermore, by remark~\ref{rem:TorGen}, $\SH^\eff(k)_{tor}$ admits a set of compact generators,  namely, the suspension spectra $\Sigma^a_{S^1}\Sigma^b_{\G_m}\Sigma^\infty_T X_+/N$, $a\in\Z$, $b\ge0$, $N>1$,  $X\in\Sm/k$, so it suffices to prove the result for these generators. Letting $\SH_\fin(k)$ be the thick subcategory of $\SH(k)$ generated by the objects $\Sigma^a_{S^1}\Sigma^b_{\G_m}\Sigma^\infty_T X_+$, $a, b\in\Z$,  $X\in\Sm/k$, it suffices to prove the theorem  for $\sE\in \SH_\fin(k)\cap \SH^\eff(k)_{tor}$.

Since $\sE$ is in $\SH^\eff(k)$, we have $\sE=f^t_0\sE$. By \cite[theorem 4]{LevineConv}, as $\sE$ is in $\SH_\fin(k)$,  the tower
\[
\ldots\to f_{n+1}^t\sE\to  f_n^t\sE\to\ldots\to f_0\sE=\sE
\]
gives rise to a strongly convergent spectral sequence
\[
E^2_{p,q}=\Pi_{p+q,0}s_q\sE(k)\Longrightarrow \Pi_{p+q,0}\sE(k).
\]
Furthermore, by \cite[proposition 6.9(3)]{LevineConv} there is an integer $N$ such that $\sE$ is topologically $N-1$-connected. By theorem~\ref{thm:Connected}, $\Re_B^\sigma(f^t_n\sE)$ is $n+N-1$ connected for all $n\in\Z$, and hence the tower
\[
\ldots\to \Re_B^\sigma(f^t_{n+1}\sE) \to \Re_B^\sigma(f^t_n\sE)\to \ldots\to \Re_B^\sigma(f^t_{0}\sE)
=\Re_B^\sigma(\sE)
\]
defines a strongly convergent spectral sequence
\[
E^2_{p,q}=\pi_{p+q}\Re_B^\sigma(s_q\sE)\Longrightarrow \pi_{p+q}\Re_B^\sigma\sE.
\]
Since $f_0\sE=\sE$, it follows that $s_q\sE=0$ for $q<0$; as $\sE$ is in $\SH(k)_{tor}$, $\pi^\mu_q\sE$ is in $\DM^\eff(k)_{tor}=\DM^\eff(k)\cap\DM(k)_{tor}$ for $q\ge0$ by  lemma~\ref{lem:TorComp}. By corollary~\ref{cor:SV}, the map
\[
\Re_{B*}^\sigma:\Pi_{p+q,0}s_q\sE(k)\to \pi_{p+q}\Re_B^\sigma(s_q\sE)
\]
is an isomorphism for all $p,q$; as both spectral sequences are strongly convergent, 
\[
\Re_{B*}^\sigma:\Pi_{n,0}\sE(k)\to \pi_n(\Re_B^\sigma)
\]
is an isomorphism for all $n\in\Z$, as desired.
\end{proof}

As a special case,  we have the homotopy analog of the theorem of Suslin-Voevodsky promised in the introduction (theorem~\ref{IntroThm:Main2}):

\begin{cor} \label{cor:SVHomotopy} Let $k$ be an algebraically closed field of characteristic zero with an embedding $\sigma:k\hookrightarrow \C$. Then for all $X\in \Sm/k$, all integers $N>1$ and $n\in\Z$, the map
\[
\Re_{B*}^\sigma:\Pi_{n,0}(\Sigma^\infty_TX_+;\Z/N)(k)\to \pi_n(\Sigma^\infty X^\an_+;\Z/N)
\]
is an isomorphism. Here $\Pi_{n,0}(-;\Z/N)$ and $\pi_n(-;\Z/N)$ are the homotopy sheaves, resp. homotopy groups, with mod $N$ coefficients.
\end{cor}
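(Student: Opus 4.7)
The plan is to deduce this from Theorem~\ref{thm:Embedding} by specializing to the torsion effective object $\sE := \Sigma^\infty_T X_+/N$, i.e.\ the cone of multiplication by $N$ on $\Sigma^\infty_T X_+$. Once I identify this cone with the mod-$N$ coefficient object on both the motivic and topological side, the corollary will be immediate.

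First I would verify that $\sE \in \SH^\eff(k)_{tor}$. Since $\Sigma^\infty_T X_+$ lies in $\SH^\eff(k) = \tau^{-\infty,0}\SH(k)$ and the latter is a localizing subcategory of $\SH(k)$, the cone $\sE$ is again in $\SH^\eff(k)$. The defining distinguished triangle $\Sigma^\infty_T X_+ \xrightarrow{N} \Sigma^\infty_T X_+ \to \sE \to \Sigma^\infty_T X_+[1]$ gives a long exact sequence of homotopy sheaves, from which $\Pi_{a,b}\sE$ is $N$-torsion for all $a,b$; by remark~\ref{rem:TorGen} this puts $\sE$ in $\SH(k)_{tor}$, hence in $\SH^\eff(k)_{tor} = \SH^\eff(k)\cap\SH(k)_{tor}$.

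Next I would apply Theorem~\ref{thm:Embedding} to $\sE$ to conclude that
\[
\Re_{B*}^\sigma : \Pi_{n,0}(\sE)(k) \longrightarrow \pi_n(\Re_B^\sigma(\sE))
\]
is an isomorphism for every $n\in \Z$. By the standard definition of mod-$N$ homotopy, the source is $\Pi_{n,0}(\Sigma^\infty_T X_+;\Z/N)(k)$. For the target, I use that $\Re_B^\sigma$ is an exact (triangulated) functor, so it carries the cofiber triangle defining $\sE$ to the cofiber triangle $\Re_B^\sigma(\Sigma^\infty_T X_+) \xrightarrow{N} \Re_B^\sigma(\Sigma^\infty_T X_+) \to \Re_B^\sigma(\sE) \to \Re_B^\sigma(\Sigma^\infty_T X_+)[1]$. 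Lemma~\ref{lem:CompBetti1} identifies $\Re_B^\sigma(\Sigma^\infty_T X_+)$ naturally with $\Sigma^\infty X^\an_+$, so $\Re_B^\sigma(\sE) \cong \Sigma^\infty X^\an_+/N$, whence $\pi_n(\Re_B^\sigma(\sE)) \cong \pi_n(\Sigma^\infty X^\an_+;\Z/N)$.

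The only bookkeeping I would want to double-check is that the isomorphism produced by Theorem~\ref{thm:Embedding}, composed with these two canonical identifications, really coincides with the Betti realization map $\Re_{B*}^\sigma$ in its standard form on mod-$N$ homotopy; this is a naturality statement and follows from the fact that every identification used is itself produced by $\Re_B^\sigma$ and its compatibility with cofiber sequences. I do not expect any substantive obstacle here: the content of the corollary is entirely absorbed into Theorem~\ref{thm:Embedding}, whose proof in turn rests on the strong convergence results of \S\ref{sec:Proof}--\ref{sec:Connected} and corollary~\ref{cor:SV}; the present step is the ``packaging'' of that theorem in the classical Suslin--Voevodsky form.
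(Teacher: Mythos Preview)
Your proposal is correct and follows essentially the same route as the paper: apply Theorem~\ref{thm:Embedding} to $\sE=\Sigma^\infty_TX_+/N\in\SH^\eff(k)_{tor}$, unwind the definition of mod-$N$ homotopy, and use lemma~\ref{lem:CompBetti1} together with exactness of $\Re_B^\sigma$ to identify the target with $\pi_n(\Sigma^\infty X^\an_+;\Z/N)$. The paper's proof is simply a terser version of what you wrote.
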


\begin{proof} We note that $\Pi_{n,0}(\sE;\Z/N)$ is by definition $\Pi_{n,0}(\sE/N)$, and similarly for $\pi_n(E;\Z/N)$. We may apply theorem~\ref{thm:Embedding} to the object $\Sigma^\infty_TX_+/N$, which is in 
$ \SH(k)_{\fin}\cap \SH^\eff(k)_{tor}$; we need only note that, by lemma~\ref{lem:CompBetti1}, $\Re_B^\sigma(\Sigma^\infty_TX_+/N)\cong \Sigma^\infty X^\an_+/N$.
\end{proof}

\section{Slices of the sphere spectrum} \label{sec:VoevConj} Voevodsky has stated a conjecture \cite[conjecture 9]{VoevOpen} giving a formula for the slices of $\mS_k$ in terms of the Adams-Novikov spectral sequence for the homotopy groups of $\mS$. This conjecture follows from  properties of the motivic Thom spectrum $\MGL$, together with a result of Hopkins-Morel \cite{HopkinsMorel} on the slices of $\MGL$, now available through the preprint of M. Hoyois \cite{Hoyois}. We give some of the details of the proof of Voevodsky's conjecture, without any claim to originality.

We first recall Voevodsky's conjecture.

For a cosimplicial abelian group $p\mapsto A^p$, let $(A^*, d)$ be associated complex, with differential the usual alternating sum of the coface maps. We have as well the quasi-isomorphic normalized subcomplex $NA^*\hookrightarrow  A^*$, with $NA^p=\cap_{i=1}^p\ker s^p_i$, where $s^p_i:A^p\to A^{p-1}$ is the $i$th co-degeneracy map.

Consider the cosimplicial spectrum
\[\vbox{
$\xymatrixcolsep{10pt}
MU^{\wedge *}:=\cdots \ \xymatrix{\ar@<16pt>[r]\ar@<-16pt>[r]\lower2pt\hbox to0pt{\kern 7pt$\vdots$\hss}&\ar@<10pt>[l]\ar@<-10pt>[l]}\ MU^{\wedge n+1}\ \xymatrix{\ar@<16pt>[r]\ar@<-16pt>[r]\lower2pt\hbox to0pt{\kern 7pt$\vdots$\hss}&\ar@<10pt>[l]\ar@<-10pt>[l]}\ 
MU^{\wedge n}\ \xymatrix{\ar@<16pt>[r]\ar@<-16pt>[r]\lower2pt\hbox to0pt{\kern 7pt$\vdots$\hss}&\ar@<10pt>[l]\ar@<-10pt>[l]}\ \cdots\ \xymatrix{\ar@<12pt>[r]\ar@<-12pt>[r]\ar[r]&\ar@<6pt>[l]\ar@<-6pt>[l]}\  MU^{\wedge 2}\ 
\xymatrix{\ar[r]& \ar@<6pt>[l]\ar@<-6pt>[l]}\ MU
 \setlength{\baselineskip}{4\baselineskip}$}
\]
with $MU^{\wedge n}$ in degree $n-1$. The maps $\leftarrow$ insert the unit in the various factors, and the maps $\to$ are multiplication maps.

Applying $\pi_*$ and taking the usual alternating sum of the coface maps gives the complex of graded abelian groups (with $\pi_*(MU)$ in cohomological degree 0)
\[
\pi_*(MU^{\wedge *+1})=\pi_*(MU)\to \pi_*(MU\wedge MU)\to\ldots\to \pi_*(MU^{\wedge n})\to\ldots
\]

Let $p:MU\to \overline{MU}$ be the homotopy cofiber of the unit map $\mS\to MU$. We have the canonical isomorphism (of left $MU$-modules)
\[
MU\wedge MU\cong MU[b_1,b_2,\ldots]:=\vee_I MU\cdot b^I,
\]
where for a monomial $b^I$, $I=(i_1,\ldots, i_r)$, we take $MUb^I$ to mean $\Sigma^{2\sum_jj\cdot i_j}MU$. The unit map $MU\wedge \mS\to MU\wedge MU$ is   split by the multiplication and thus the map $MU\wedge MU^{\wedge n-1}\to MU\wedge\overline{MU}^{\wedge n-1}$ is canonically split.  Via this splitting, the subgroups $\pi_*(MU\wedge \overline{MU}^{\wedge n})$ form a  graded subcomplex of 
$\pi_*(MU^{\wedge *+1})$, which we denote by $\pi_*(NMU)^*$. This is in fact the normalized subcomplex $N\pi_*(MU^{\wedge *+1})$ of $\pi_*(MU^{\wedge *+1})$; in particular,  the inclusion $\alpha:\pi_*(NMU)^*\to \pi_*(MU^{\wedge *+1})$  is a quasi-isomorphism.

Furthermore, via this  split injection  $\pi_*(MU\wedge \overline{MU}^{\wedge n})$ is identified with an ideal in a polynomial algebra over the Lazard ring $\L=\pi_*(MU)$:
\[
\pi_*(MU\wedge \overline{MU}^{\wedge n}))=\L\otimes(\Z[b_1, b_2,\ldots]_+)^{\otimes n}
\]
where $\Z[-]_+$ means the ideal generated by all the variables $b_i$. The grading is given by setting $\deg b_m=2m$ and using the grading in $\L$ induced by the isomorphism $\pi_*(MU)\cong \L$. In particular, we have for each $q\ge0$ the degree $2q$ summand of the above complex
\[
\pi_{2q}(NMU)^*:=[\L\to  \L\otimes\Z[b_1,b_2,\ldots]_+\to\ldots\to \L\otimes(\Z[b_1,b_2,\ldots]_+)^{\otimes n}\to\ldots]_{2q};
\]
note that $[\L\otimes(\Z[b_1,b_2,\ldots]_+)^{\otimes m}]_{2q}=0$ for $m>q$, so $\pi_{2q}(NMU)^*$ is supported in cohomological degrees $[0,q]$. 

\begin{conj}[Voevodsky \hbox{\cite[conjecture 9]{VoevOpen}}] \label{conj:VoevSlice}There is a natural isomorphism in $\SH(k)$
\[
s_q(\mS_k)\cong \Sigma^q_T \EM(\Z^{tr}\otimes \pi_{2q}(NMU)^*).
\]
\end{conj}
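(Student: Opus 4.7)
The plan is to lift the classical cobar/Adams--Novikov resolution of $\mS$ to the motivic world using $\MGL$ in place of $MU$, compute the slices termwise via the Hopkins--Morel--Hoyois theorem, and then identify the resulting cosimplicial object with the normalized subcomplex $\pi_{2q}(NMU)^*$. The basic structural input is that $\MGL$ is a commutative monoid in $\SH(k)$ with a unit map $u:\mS_k\to\MGL$; writing $\overline{\MGL}$ for the cofiber of $u$, one gets a cofiber sequence $\mS_k\to\MGL\to\overline{\MGL}$, and iterating produces the motivic Adams--Novikov tower whose associated graded pieces are built from $\MGL\wedge\overline{\MGL}^{\wedge n}$, and whose totalization augments to $\mS_k$ via the cosimplicial spectrum $\MGL^{\wedge *+1}$.

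First I would apply $s_q$ termwise to $\MGL^{\wedge *+1}$. The Hopkins--Morel theorem, proved in the form needed by Hoyois, identifies $s_q(\MGL)\cong \Sigma^q_T\EM(\Z^{tr}\otimes \L_q)$ where $\L=\pi_*(MU)$ is the Lazard ring. Because $\MGL\wedge\MGL\cong\MGL[b_1,b_2,\ldots]$ in $\SH(k)$ (by Hoyois, in analogy with the topological splitting after evaluation at $MU$), and the slice functors are compatible with $T$-suspension and wedge sums, one gets
\[
s_q(\MGL^{\wedge n+1})\cong \Sigma^q_T\EM\bigl(\Z^{tr}\otimes \pi_{2q}(MU^{\wedge n+1})\bigr),
\]
and under the splitting these identifications are natural in the cosimplicial structure maps (which on the $b_i$ side reduce to the usual ones for $MU$). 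Here I use that $\EM:\DM(k)\to\SH(k)$ is exact and commutes with coproducts, together with Pelaez's theorem that all slices are motivic Eilenberg--MacLane spectra.

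Next, I would pass from the full cobar to the normalized subcomplex. Exactly as in the topological story, the split unit $\MGL\wedge\mS_k\to\MGL\wedge\MGL$ provides a splitting of each coface, so the subobjects $\MGL\wedge\overline{\MGL}^{\wedge n}$ assemble into the normalized subcomplex, and applying $s_q$ recovers precisely $\pi_{2q}(NMU)^*$ after tensoring with $\Z^{tr}$ and shifting by $\Sigma_T^q$. The cofiltration $\{f^t_q(\mS_k)\}$ is then matched with the cofiltration obtained from $\MGL^{\wedge *+1}$: by iterated application of the cofiber sequence $\mS_k\to\MGL\to\overline{\MGL}$ one sees that the $n$th partial totalization of the cobar sits inside $f^t_n\MGL$-type pieces, and since $\MGL$ is in $\SH^{\eff}(k)$ with $s_0\MGL=H\Z$, the comparison of towers after applying $s_q$ reduces to an algebraic identity of complexes.

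The hard part will be step two, namely making rigorous the identification $s_q(\MGL^{\wedge n+1})\cong \Sigma^q_T\EM(\Z^{tr}\otimes\pi_{2q}(MU^{\wedge n+1}))$ naturally in the cosimplicial structure maps, together with ensuring that the slice filtration on the totalization of $\MGL^{\wedge *+1}$ matches the Adams--Novikov filtration on $\mS_k$. This requires both the multiplicativity of the slice tower (which is available after Pelaez) and a convergence argument for the cobar resolution, which can be handled using that each $\MGL\wedge\overline{\MGL}^{\wedge n}$ lies in a suitably connective part of the slice filtration (increasing with $n$), so the tower of partial totalizations converges slicewise. Everything else is a formal matter of assembling the identifications and invoking Voevodsky's cancellation to move $\Sigma^q_T$ past $\EM$.
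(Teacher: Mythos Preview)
Your outline is essentially the same strategy as the paper's: resolve $\mS_k$ by the $\MGL$-cobar construction, compute slices termwise via Hopkins--Morel--Hoyois, and identify the result with $\pi_{2q}(NMU)^*$. Two points you flag as ``the hard part'' are exactly where the paper's argument differs from your sketch, and where your proposal as written has gaps.

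First, naturality. You say the identifications $s_q(\MGL^{\wedge n+1})\cong\Sigma^q_T\EM(\Z^{tr}\otimes\pi_{2q}(MU^{\wedge n+1}))$ should be natural in the cosimplicial maps ``after Pelaez''. But Pelaez's multiplicativity does not give you a functorial slice on diagrams; knowing each slice individually is an Eilenberg--MacLane spectrum does not pin down the maps between them. The paper handles this via the Guti\'errez--R\"ondigs--Spitzweck--{\O}stv{\ae}r machinery, which lifts $s_q$ to a functor $\tilde{s}_q$ landing in $\Ho\,\Mod\text{-}M\Z$, and extends this to a functor $\tilde{s}_q^\sS$ on diagram categories $\Spt_T^\Sigma(k)^\sS$ for finite posets $\sS$. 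The further key observation is that the thick subcategory $\langle\Sigma^q_TM\Z\rangle\subset\Ho\,\Mod\text{-}M\Z$ is equivalent to $D^b(\Ab)$ (since $\Hom(\Sigma^q_TM\Z,\Sigma^q_TM\Z[n])=H^n(k,\Z(0))$), and each $\tilde{s}_q(\MGL^{\wedge j})$ sits in the heart. This is what lets one regard the entire diagram of slices as a diagram in $\Ab$, after which the comparison with $\pi_{2q}(MU^{\wedge *+1})$ becomes automatic.

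Second, convergence. Rather than totalizing the full cosimplicial object and arguing convergence, the paper works with the finite $n$-cube $[\mS_k\to\MGL]^{\wedge n}$. Since $\overline{\MGL}\in\Sigma_T\SH^\eff(k)$, one has $\overline{\MGL}^{\wedge n}\in\Sigma^n_T\SH^\eff(k)$ and hence $s_q(\overline{\MGL}^{\wedge n})=0$ for $q<n$; the distinguished triangle relating $\mS_k$, $\Tot[\mS_k\to\MGL]_0^{\wedge n}[1]$, and $\overline{\MGL}^{\wedge n}$ then gives $\tilde{s}_q\mS_k\cong\tilde{s}_q\Tot[\mS_k\to\MGL]_0^{\wedge n}[1]$ on the nose for any fixed $q<n$. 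This sidesteps your proposed slicewise-convergence argument entirely. The passage from the punctured $n$-cube to the normalized complex is then a purely algebraic lemma about cosimplicial abelian groups (the paper's lemma~\ref{lem:DK}), together with the degree bound $\pi_{2q}(NMU)^m=0$ for $m>q$.
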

Here $\Z^{tr}=\Z^{tr}(\Spec k)\in \DM(k)$. 

The conjecture immediately implies
\begin{cor}\label{cor:SliceAN} 1. $\pi^\mu_q(\mS_k)\cong \Z^\tr\otimes\pi_{2q}(NMU)^*$.\\
2.  The cohomology sheaves $\sH^p(\pi^\mu_q(\mS_k))$ of the effective motive $\pi^\mu_q(\mS_k)$ are zero for $p<0$, $p>q$.\\
3. For each  $q>0$ and each $p$, $0\le p\le q$ there is a finite abelian group $A_{p,q}$ with $\sH^p(\pi^\mu_q(\mS_k))\cong A_{p,q}\otimes\Z^\tr$. \\
4. $\pi^\mu_0(\mS_k)=\Z^\tr$.
\end{cor}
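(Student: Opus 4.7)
The four statements are essentially formal unpackings of Conjecture \ref{conj:VoevSlice}, combined with one topological input for (3). My plan is to dispatch (1), (2), (4) by direct bookkeeping and to handle (3) by recognizing the relevant complex as the cobar complex computing the Adams--Novikov $E_2$-page.

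For (1), I would start from Pelaez's theorem, which gives $s_q(\mS_k) \cong \EM(\pi^\mu_q(\mS_k)(q)[2q])$ with $\pi^\mu_q(\mS_k) \in \DM^\eff(k)$. Under the equivalence $\DM(k) \cong \Ho\,\Mod\text{-}M\Z$ of Theorem \ref{thm:RO}, the suspension $\Sigma_T = \Sigma_{S^1} \circ \Sigma_{\G_m}$ corresponds to $(1)[2]$ on motives (since $\Sigma_{S^1}$ is $[1]$ and $\Sigma_{\G_m}$ is $(1)[1]$), so the conjecture can be rewritten as
\[
\EM(\pi^\mu_q(\mS_k)(q)[2q]) \cong \EM\bigl((\Z^{tr}\otimes \pi_{2q}(NMU)^*)(q)[2q]\bigr).
\]
Transporting across the R\"ondigs--{\O}stv{\ae}r equivalence, this is an isomorphism in $\Ho\,\Mod\text{-}M\Z$, hence in $\DM(k)$. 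Voevodsky's cancellation theorem cited before Theorem \ref{thm:RO} then lets me strip the $(q)[2q]$ and land in $\DM^\eff(k)$, yielding the claimed identification $\pi^\mu_q(\mS_k) \cong \Z^{tr}\otimes \pi_{2q}(NMU)^*$.

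For (2) and (4), everything reduces to counting degrees in the complex $\pi_{2q}(NMU)^*$. As already noted in the text preceding the conjecture, $[\L\otimes (\Z[b_1,b_2,\ldots]_+)^{\otimes m}]_{2q} = 0$ for $m > q$ since each $b_i$ has degree $\ge 2$; and clearly there is nothing in cohomological degrees $<0$. Since $\Z^{tr}$ is concentrated in degree $0$ with respect to the homotopy $t$-structure on $\DM^\eff(k)$ and is flat, tensoring gives $\sH^p(\Z^{tr}\otimes \pi_{2q}(NMU)^*) \cong H^p(\pi_{2q}(NMU)^*)\otimes \Z^{tr}$, which proves (2). For (4), when $q=0$ only $\L_0 = \Z$ contributes in total degree $0$ and all higher terms in the normalized complex are zero in internal degree $0$, so $\pi_0(NMU)^* = \Z$ concentrated in cohomological degree $0$, giving $\pi^\mu_0(\mS_k) \cong \Z^{tr}$.

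The main obstacle is (3), which requires a genuine input from topology rather than formal manipulation. The point is that $\pi_{2q}(NMU)^*$ is, up to reindexing, the internal-degree-$2q$ part of the cobar complex for the Hopf algebroid $(MU_*, MU_*MU)$, whose cohomology is $\Ext^{p,2q}_{MU_*MU}(MU_*,MU_*)$, i.e., the $E_2^{p,2q}$-term of the Adams--Novikov spectral sequence. Two standard facts then finish the job: these $\Ext$-groups are finitely generated in each bidegree (since $MU_*$ and $MU_*MU$ are degreewise finitely generated over $\Z$ in the relevant ranges), and they are torsion in positive topological degree (a classical result of Novikov, coming from the fact that rationally $MU_*MU \otimes \Q$ is cofree over $MU_*\otimes \Q$, so the cobar cohomology collapses rationally onto $MU_*\otimes \Q$ in cohomological degree $0$ only). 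Combining these gives finite abelian groups $A_{p,q} := H^p(\pi_{2q}(NMU)^*)$ for $q > 0$, and (3) follows from (2).
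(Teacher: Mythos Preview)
Your argument is essentially the elaboration the paper leaves implicit (the paper just says ``The conjecture immediately implies'' the corollary and then identifies $A_{p,q}$ with $\Ext^{p,2q}_{MU_*MU}(MU_*,MU_*)$). Your treatment of (2)--(4) is correct, and your handling of (3)---recognizing $\pi_{2q}(NMU)^*$ as the degree-$2q$ part of the cobar complex, then invoking degreewise finite generation together with the rational collapse of the Hopf algebroid cohomology---is exactly the topological input the paper appeals to.

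One step in (1) deserves more care. The conjecture as \emph{stated} is an isomorphism in $\SH(k)$, and your sentence ``Transporting across the R\"ondigs--{\O}stv{\ae}r equivalence, this is an isomorphism in $\Ho\,\Mod\text{-}M\Z$'' does not literally follow: the functor $\EM=RF:\DM(k)\to\SH(k)$ is conservative but not full, so an isomorphism $\EM(M_1)\cong\EM(M_2)$ in $\SH(k)$ does not by itself produce one in $\DM(k)$. What actually closes the gap is that the paper's proof of the conjecture establishes the isomorphism already in $\Ho\,\Mod\text{-}M\Z$ via the lifted slice functor $\tilde{s}_q$ of lemma~\ref{lem:Slice}, and $\pi^\mu_q(\mS_k)$ is defined through this lift. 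Once read that way your argument is fine; you should just make explicit that you are using the module-level version of the conjecture (equivalently, the definition of $\pi^\mu_q$ via $\tilde{s}_q$), not merely its shadow in $\SH(k)$. After that, stripping $(q)[2q]$ is immediate since the Tate twist is invertible in $\DM(k)$ and $\DM^\eff(k)\hookrightarrow\DM(k)$ is fully faithful.
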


The group $A_{p,q}$ is just the $E_2^{p,-2q}$ term in the Adams-Novikov spectral sequence
\[
A_{p,q}=\Ext^{p, 2q}_{MU_*(MU)}(MU_*, MU_*)=E_2^{p, -2q}(AN).
\]
This follows directly from the identification of $E_2^{p,q}(AN)$ with $H^p(\pi_{-q}(NMU)^*)$ (see e.g. \cite[III, \S 15]{Adams}, here we use the   indexing convention for which  $E_2^{p,q}(AN)$ contributes to $\pi_{-p-q}(\mS)$).   

Corollary~\ref{cor:SliceAN} hints at a possible connection between the Atiyah-Hirzebruch spectral sequence associated to the slice tower for $\mS_k$:
\[
E_2^{p,q}(AH)=H^{p-q}(\Spec k, \pi^\mu_{-q}\mS_k(-q))\Longrightarrow \Pi_{-p-q,0}(\mS_k)(k),
\]
and the Adams-Novikov spectral sequence. In fact, we have 
\begin{thm}\label{thm:SliceAN} For $k$ algebraically closed of characteristic zero we have
\[
E_2^{p,q}(AH)= E_2^{p-q, 2q}(AN)\otimes\hat{\Z}(-q)
\]
where $\hat{\Z}(q)=\lim_N\mu_N^{\otimes q}$.
\end{thm}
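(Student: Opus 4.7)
The plan is to reduce the computation of $E_2^{p,q}(AH) = H^{p-q}(\Spec k, \pi^\mu_{-q}\mS_k(-q))$ to a motivic cohomology computation of $\Spec k$ with finite twisted coefficients, for which the Beilinson-Lichtenbaum theorem together with the fact that $k$ is algebraically closed gives a particularly clean answer. Since $\pi^\mu_{-q}\mS_k=0$ for $-q<0$, both sides of the claimed identity vanish for $q>0$, so I assume $q\le 0$ and set $q'=-q\ge 0$; the task becomes computing $H^{p+q'}(\Spec k, \pi^\mu_{q'}\mS_k(q'))$.

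By Corollary~\ref{cor:SliceAN}, $\pi^\mu_{q'}\mS_k$ is a bounded complex in $\DM^\eff(k)$ with cohomology sheaves $\sH^b(\pi^\mu_{q'}\mS_k)\cong A_{b,q'}\otimes\Z^\tr$ for $0\le b\le q'$, where $A_{b,q'}=E_2^{b,-2q'}(AN)$ is a finite abelian group for $q'>0$ (and $A_{0,0}=\Z$). The standard hypercohomology spectral sequence arising from the motivic $t$-structure on $\DM(k)$ then reads
\[
E_2^{a,b} = H^a\!\bigl(\Spec k, (A_{b,q'}\otimes\Z^\tr)(q')\bigr) \Longrightarrow H^{a+b}\!\bigl(\Spec k, \pi^\mu_{q'}\mS_k(q')\bigr),
\]
and converges (finitely) because $\pi^\mu_{q'}\mS_k$ is bounded.

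The heart of the argument is the claim that for $k$ algebraically closed of characteristic zero, any finite abelian group $A$, and any $n\ge0$,
\[
H^a\!\bigl(\Spec k, (A\otimes\Z^\tr)(n)\bigr) = \begin{cases} A\otimes\hat\Z(n) & \text{if } a=0,\\ 0 & \text{if } a\ne 0. \end{cases}
\]
Reducing by devissage to $A=\Z/N$, this becomes a computation of motivic cohomology $H^a(\Spec k, \Z/N(n))$. Vanishing for $a<0$ is clear; vanishing for $a>n$ follows from the standard vanishing of motivic cohomology of a field above the weight combined with the long exact sequence connecting $\Z$ and $\Z/N$ coefficients. The critical range $0<a\le n$ is exactly the Beilinson-Lichtenbaum identification of $H^a(\Spec k, \Z/N(n))$ with $H^a_\et(\Spec k, \mu_N^{\otimes n})$ (Bloch-Kato, now a theorem of Voevodsky and Rost), and the latter vanishes over an algebraically closed field in positive degrees. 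In degree zero, $H^0_\et(\Spec k, \mu_N^{\otimes n})=\mu_N(k)^{\otimes n}$; passing to the inverse limit over $N$ and tensoring with $A$ (which has finite exponent) yields $A\otimes\hat\Z(n)$.

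With this vanishing in hand the spectral sequence collapses onto the row $a=0$, so
\[
H^{p+q'}\!\bigl(\Spec k, \pi^\mu_{q'}\mS_k(q')\bigr) \cong A_{p+q',\,q'}\otimes\hat\Z(q') = E_2^{p+q',\,-2q'}(AN)\otimes\hat\Z(q'),
\]
and substituting $q=-q'$ recovers the identity $E_2^{p,q}(AH)=E_2^{p-q,\,2q}(AN)\otimes\hat\Z(-q)$. The main obstacle is the cohomology vanishing in the middle step, which rests on the deep external input of Bloch-Kato / Beilinson-Lichtenbaum; once that and the structural description of $\sH^b(\pi^\mu_{q'}\mS_k)$ from Corollary~\ref{cor:SliceAN} are granted, everything else is routine bookkeeping with a hypercohomology spectral sequence.
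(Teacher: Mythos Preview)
Your proof is correct and follows essentially the same route as the paper: use Corollary~\ref{cor:SliceAN} to identify the cohomology sheaves of $\pi^\mu_{q'}\mS_k$ as constant motives $A_{b,q'}\otimes\Z^\tr$, run the hypercohomology spectral sequence, and observe that for $k$ algebraically closed the motivic cohomology $H^a(\Spec k,\Z/N(n))$ is concentrated in degree $a=0$ with value $\mu_N^{\otimes n}$, so the spectral sequence collapses.

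The one substantive difference is in how you justify the key vanishing $H^a(\Spec k,\Z/N(n))=0$ for $a\neq 0$. You invoke Beilinson--Lichtenbaum / Bloch--Kato, which is correct but far heavier than necessary: for an \emph{algebraically closed} field this computation already follows from the Suslin--Voevodsky theorem \cite[theorem 8.3]{SuslinVoev} (1996), which the paper cites directly and which predates the proof of Bloch--Kato by over a decade. The full norm residue isomorphism is only needed to compare motivic and \'etale cohomology over arbitrary fields; over $\bar k$ the \'etale side is already trivial in positive degrees, and Suslin rigidity handles the comparison. So your argument works, but you should downgrade the reference.
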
 
This is  theorem~\ref{IntroThm:Main3}, announced in the introduction; we reiterate that we do not know if $d_3(AN)=d_2(AH)$, even though these two differentials have isomorphic source and target.

\begin{proof} 
Since $k$ is algebraically closed and of characteristic zero, the Suslin-Voevodsky theorem  \cite[theorem 8.3]{SuslinVoev} implies (for $q\ge0$)
\[
H^n(\Spec k, \Z/N(q))=\begin{cases}0&\text{ for }n\neq0\\ \mu_N^{\otimes q}&\text{ for }n=0.\end{cases}
\]
Thus the spectral sequence 
\[
E_2^{a,b}=H^a(\Spec k, \sH^b(\pi^\mu_{q}\mS_k)(q))\Longrightarrow H^{a+b}(\Spec k, \pi^\mu_{q}\mS_k(q))
\]
degenerates at $E_2$, $E_2^{a,b}=0$ for $a\neq0$,  and we have
\[
E_2^{p,-q}(AH)=H^{p+q}(\Spec k, \pi^\mu_{q}\mS_k(q))=A_{p+q,q}\otimes\hat{\Z}(q)=E_2^{p+q,-2q}(AN)\otimes\hat{\Z}(q).
\]
\end{proof}

\begin{proof}[Proof of conjecture~\ref{conj:VoevSlice}] 
We adapt the construction of the Adams-Novikov spectral sequence given in \cite[\hbox{\it loc. cit.}]{Adams}. This involves the use of $n$-cubes in $\Spt_T(k)$; in order to deal with these, we need a functorial version of the slices $s_q$ in the homotopy category of $M\Z$-modules, which we now proceed to construct.

Let $\sS$ be the category associated to a finite partially ordered set and $\sM$ a pointed complete and cocomplete category, giving us the functor category $\sM^\sS$. For $s\in\sS$, let $i_s^*:\sM^\sS\to \sM$ be the evaluation at $s$. Let ${\bf F}^s:\sM\to \sM^\sS$ be the free diagram functor at $s$ \cite[definition 11.5.25]{Hirschhorn}; as $\sS$ is a partially ordered set, ${\bf F}^s(A)$ is the constant functor with value $A$ on the subcategory $\sS_{\ge s}$ of objects $t\ge s$ in $\sS$, extended by $\pt$ to the rest of $\sS$, and similarly for morphisms. We have as well the ``dual" ${\bf F}_s:\sM\to \sM^\sS$ sending $A$ to the constant functor with value $A$ on the subcategory $\sS_{\le s}$ of objects $t\le s$ in $\sS$, extended by $\pt$ to the rest of $\sS$. ${\bf F}^s$ is left adjoint to $i_s^*$ and ${\bf F}_s$ is right adjoint. If $\sM$ is a pointed symmetric monoidal category and $\sS$ has an initial object $0$,  we make $\sM^\sS$ a pointed symmetric monoidal category with $(F\wedge G)(s)=F(s)\wedge G(s)$ and unit $\1^\sS:={\bf F}^0(\1)$; in this case   $i_s^*$ is monoidal and ${\bf F}^s$ and ${\bf F}_s$ are monoidal except for preserving the unit.

Suppose $\sM$ is a pointed model category. We give $\sM^\sS$ the projective model structure (proposition~\ref{prop:FunctorModelCat}). The next result lists a number of properties of $\sM^\sS$.

\begin{lem}\label{lem:FunctCatProps} 1. The functors $i_s^*$ and ${\bf F}^s$ preserve cofibrations, fibrations and weak equivalences, ${\bf F}_s$ preserves fibrations and weak equivalences, and the adjoint pairs $({\bf F}^s, i_s^*)$ and $(i_s^*, {\bf F}_s)$ are Quillen pairs.\\
2. If $\sM$ is a simplicial, resp.\,left proper, resp.\,right proper, resp.\,cofibrantly generated, resp.\,cellular, resp.\,combinatorial, pointed model category, the same holds for $\sM^\sS$. If $I$ generates the cofibrations in $\sM$ and $J$ generates the trivial cofibrations in $\sM$, then the collection ${\bf F}^s(I)$, $s\in \sS$, generates the cofibrations in $\sM^\sS$ and  ${\bf F}^s(J)$, $s\in \sS$, generates the trivial cofibrations in $\sM^\sS$.\\
3. Suppose that $\sS$ has an initial object and $\sM$ is a pointed (simplicial) monoidal cofibrantly generated model category. Then $\sM^\sS$ is a pointed (simplicial) monoidal model category.
\end{lem}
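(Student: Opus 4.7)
My plan is to handle each of the three statements in turn, mostly by transcribing to $\sM^\sS$ (with its projective model structure) the corresponding standard results about projective model structures on diagram categories, combined with the adjunction properties of $i_s^*$, ${\bf F}^s$, and ${\bf F}_s$. For (1), fibrations and weak equivalences in the projective model structure are pointwise by definition, so $i_s^*$ preserves both; the explicit formulas for ${\bf F}^s$ and ${\bf F}_s$ (constant on the relevant up-set or down-set, basepoint elsewhere) show immediately that both functors send fibrations to pointwise fibrations and weak equivalences to pointwise weak equivalences, using that the identity map of $\pt$ is a trivial fibration. To see that $i_s^*$ preserves cofibrations, I transpose across the adjunction $({\bf F}^s, i_s^*)$: a lifting problem for $i_s^*(f)$ against a trivial fibration $g:A\to B$ in $\sM$ corresponds to one for $f$ against ${\bf F}^s(g)$, which is a pointwise trivial fibration. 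The assertion that ${\bf F}^s$ preserves cofibrations is dual, and the two Quillen pairs then follow.

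For (2), I invoke the transfer principle for projective model structures on diagram categories: \cite[theorem 11.6.1, proposition 12.1.5]{Hirschhorn} yields the cofibrantly generated, cellular, and left/right proper cases, and \cite[theorem 2.14]{Barwick} handles the combinatorial case; the simplicial enrichment is pointwise. To identify $\{{\bf F}^s(i)\mid s\in\sS,\,i\in I\}$ as generating cofibrations of $\sM^\sS$, I use the adjunction $({\bf F}^s,i_s^*)$: a morphism $f$ in $\sM^\sS$ has right lifting against all ${\bf F}^s(i)$ iff each evaluation $i_s^*(f)$ has right lifting against $I$ iff $f$ is a pointwise (i.e.\ projective) trivial fibration; the argument for the generating trivial cofibrations is the same with $J$ in place of $I$.

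For (3), the smash product on $\sM^\sS$ is pointwise, so associativity, symmetry, and the unit isomorphisms descend from $\sM$, while $\1^\sS={\bf F}^0(\1)$ is cofibrant since ${\bf F}^0$ is left Quillen and $\1$ is cofibrant in $\sM$. The pushout product axiom reduces by cofibrant generation \cite[\S 4.2]{Hovey} to generating cofibrations, and a direct pointwise computation identifies ${\bf F}^s(i)\,\square\,{\bf F}^t(j)$ with the ``free diagram'' ${\bf F}^{U_{s,t}}(i\,\square\,j)$ on the upper set $U_{s,t}:=\sS_{\ge s}\cap\sS_{\ge t}$, where $i\,\square\,j$ is the pushout product in $\sM$ (a cofibration, resp. trivial cofibration, by the monoidal model category axiom for $\sM$). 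To show that ${\bf F}^U(k)$ is a projective cofibration for an arbitrary upper set $U\subset\sS$ and cofibration $k$ in $\sM$, I argue by induction on $|U|$: for a minimal element $u\in U$, the set $U':=U\setminus\{u\}$ is again an upper set, and there is a natural pushout decomposition ${\bf F}^U(X)\cong {\bf F}^{U'}(X)\cup_{{\bf F}^{U'\cap\sS_{\ge u}}(X)}{\bf F}^u(X)$; accordingly ${\bf F}^U(k)$ is a pushout of the three cofibrations ${\bf F}^{U'}(k)$, ${\bf F}^u(k)$, and ${\bf F}^{U'\cap\sS_{\ge u}}(k)$ on strictly smaller upper sets (with the principal case handled by (1)), hence a cofibration by the gluing lemma. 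Trivial cofibrations are handled identically.

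The main obstacle is the pushout product axiom in (3): since $U_{s,t}$ is in general not of the form $\sS_{\ge u}$, the cofibrancy of ${\bf F}^{U_{s,t}}(i\,\square\,j)$ is not immediate from (1) but requires either the inductive decomposition above or, equivalently, the observation that the extension-by-basepoint functor $\sM^U\to\sM^\sS$ along an upper set $U$ is left Quillen (its right adjoint, pointwise restriction, preserves fibrations and weak equivalences) combined with a separate analysis of constant diagrams on the finite poset $U$.
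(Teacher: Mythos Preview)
Your treatment of (1) and (2) matches the paper's. One small slip in (1): to show that $i_s^*$ preserves cofibrations you must transpose across the adjunction $(i_s^*,{\bf F}_s)$, not $({\bf F}^s,i_s^*)$; the latter converts lifting problems for ${\bf F}^s(i)$ into lifting problems for $i$, which is the argument for ${\bf F}^s$, not for $i_s^*$. The paper phrases this correctly by observing that $i_s^*$ and ${\bf F}^s$ are the \emph{left} adjoints in their respective pairs, with right adjoints already shown to preserve trivial fibrations.

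For (3) the paper also reduces to generators via \cite[corollary 4.2.5]{Hovey}, but says nothing further; you supply the key computation ${\bf F}^s(i)\,\square\,{\bf F}^t(j)\cong{\bf F}^{U_{s,t}}(i\,\square\,j)$, which is correct and is exactly what is needed. However, your inductive step has a genuine gap. The statement ``${\bf F}^U(k)$ is a pushout of the three cofibrations ${\bf F}^{U'}(k)$, ${\bf F}^u(k)$, ${\bf F}^{V}(k)$, hence a cofibration by the gluing lemma'' is not valid: the gluing (cube) lemma controls weak equivalences under pushout, not cofibrations. To conclude that a map of pushouts is a cofibration one needs the Reedy latching conditions, e.g.\ that ${\bf F}^{U'}(X)\cup_{{\bf F}^{V}(X)}{\bf F}^{V}(Y)\to{\bf F}^{U'}(Y)$ is a cofibration, and this does not follow from your inductive hypothesis. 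Your alternative at the end faces the same problem, since the ``separate analysis of constant diagrams on $U$'' is precisely the point at issue.

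A clean fix is a cellular filtration rather than a single pushout. List $U=\{v_1,\dots,v_n\}$ so that each $S_i:=\{v_1,\dots,v_i\}$ is an upper set (i.e.\ $v_i$ is maximal in $U\setminus S_{i-1}$). Let $Z_i\in\sM^\sS$ be the diagram with value $Y$ on $S_i$, value $X$ on $U\setminus S_i$, value $\pt$ off $U$, with structure maps built from $k$ and identities. Then $Z_0={\bf F}^U(X)$, $Z_n={\bf F}^U(Y)$, and since $\sS_{>v_i}\subset S_{i-1}$ one checks pointwise that $Z_{i-1}\to Z_i$ is the pushout of ${\bf F}^{v_i}(k)$ along the canonical map ${\bf F}^{v_i}(X)\to Z_{i-1}$. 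Thus ${\bf F}^U(k)$ is a finite composite of pushouts of maps ${\bf F}^{v}(k)$, which are projective cofibrations by (1); the trivial cofibration case is identical.
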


\begin{proof} For (1), it is obvious that all three functors preserve fibrations and weak equivalences; as $i_s^*$ and ${\bf F}^s$ are left adjoint to ${\bf F}_s$ and $i_s^*$, respectively, this shows that $i_s^*$ and ${\bf F}^s$ preserve cofibrations and $({\bf F}^s, i_s^*)$ and $(i_s^*, {\bf F}_s)$ are Quillen pairs.

For (2), the assertions about cofibrantly generated, resp. cellular, resp. combinatorial $\sM$ are proven in \cite[theorem 11.6.1, proposition 12.1.5]{Hirschhorn}; the statement on combinatorial model categories is \cite[theorem 2.14]{Barwick}. The proof of \cite[theorem 11.7.3]{Hirschhorn} shows that simplicial structure on $\sM$ makes $\sM^\sS$ into a simplicial model category. As fibrations,  weak equivalences and pull-backs in $\sM^\sS$ are defined pointwise, the statement about right properness is clear. Similarly, as by (1), every cofibration in $\sM^\sS$ is a pointwise cofibration, the fact that weak equivalences and push-outs are defined pointwise shows that $\sM^\sS$ inherits left properness from $\sM$.

For (3), define the internal Hom for $X, Y\in \sM^\sS$ by the equalizer sequence
\[
\sHom(X,Y)(t)\to \prod_{s\ge t}\sHom(X(s), Y(s))\xymatrix{\ar@<3pt>[r]\ar@<-3pt>[r]&} \prod_{s_2\ge s_1\ge t}\sHom(X(s_1), Y(s_2))
\]
We verify the axioms \cite[definition 4.2.6]{Hovey}:  4.2.6(1) follows from the description of the generating cofibrations and generating trivial cofibrations in $\sM^\sS$ given in (2) and \cite[corollary 4.2.5]{Hovey} and  4.2.6(2) follows from the fact that evaluation at all $s\in \sS$ preserves cofibrations and detects weak equivalences.
\end{proof}

\begin{lem} \label{lem:Slice} Suppose that $\sS$ has an initial object. \\
1. For each $q\in \Z$, there is a functor $\tilde{s}_q:\Ho\,\Spt_T^\Sigma(k)\to \Ho\,\Mod\text{-}M\Z$ and a natural isomorphism $RF\circ \tilde{s}_q\cong s_q$.\\
2.  For each $q\in\Z$, there is a functor $\tilde{s}_q^\sS:\Ho\,(\Spt_T^\Sigma(k)^\sS)\to :\Ho\,(\Mod\text{-}M\Z^\sS)$ and natural isomorphisms for $c\in\sS$, $\tilde{s}_q\circ i^*_c\cong i_c^*\circ \tilde{s}_q^\sS$.
\end{lem}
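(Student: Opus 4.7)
The plan is to construct $\tilde{s}_q$ by first smashing with $M\Z$ to land in $\Mod\text{-}M\Z$, then applying a model-categorical slice functor built via right Bousfield localization inside $\Mod\text{-}M\Z$. The diagram version (Part 2) is obtained by running the whole construction inside the functor categories $\Spt_T^\Sigma(k)^\sS$ and $\Mod\text{-}M\Z^\sS$, which inherit the needed structure thanks to lemma~\ref{lem:FunctCatProps}.

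First I would build, for each $q\in\Z$, a functorial model of $f_q^t$ on $\Mod\text{-}M\Z$. The category $\Mod\text{-}M\Z$ with its model structure (as recalled above, transferred from $\Spt_T^\Sigma(k)$ along the free module functor $M\Z\wedge(-)$) is pointed, proper, simplicial and cellular, so I can apply theorem~\ref{thm:RBousLoc} to the set of cells
$\tilde K_q:=\{M\Z\wedge F_n(\Sigma^p_{S^1}\Sigma^{q'}_{\G_m}X_+)\mid X\in\Sm/k,\ q'-n\ge q,\ p\in\Z\}$.
Cofibrant replacement in $\sR_{\tilde K_q}\Mod\text{-}M\Z$ defines a functor $\tilde\phi_q$ with a natural transformation $\tilde\phi_q\to \id$, and I set $\tilde s_q^{\mathrm{mod}}:=\mathrm{cofib}(\tilde\phi_{q+1}\to\tilde\phi_q)$. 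Composing with the (derived) free module functor gives the desired $\tilde s_q(\sE):=\tilde s_q^{\mathrm{mod}}(M\Z\wedge^L\sE)$ on $\Ho\,\Spt_T^\Sigma(k)$. To verify $RF\circ \tilde s_q\cong s_q$, I observe that $F$ carries $\tilde K_q$ into the set $K_q$ defining Voevodsky's slice tower, so $F\circ\tilde\phi_q$ is $K_q$-colocal and therefore computes $f_q^t\circ F$ (up to canonical weak equivalence) by the universal property of theorem~\ref{thm:truncation}. Combined with Pelaez's multiplicative slice result \cite{Pelaez}, which shows that the slice filtration is compatible with the monoid structure of $\mS_k\to M\Z$ and hence that the natural map $s_q(\sE)\to s_q(M\Z\wedge\sE)$ is an equivalence, this gives the desired comparison.

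For (2), lemma~\ref{lem:FunctCatProps} endows $\Spt_T^\Sigma(k)^\sS$ and $\Mod\text{-}M\Z^\sS$ with cellular proper simplicial pointed model structures whose generating cofibrations are the sets $\mathbf{F}^s(I)$, $s\in\sS$. I form the right Bousfield localization with respect to the cells $\bigcup_{s\in\sS}\mathbf{F}^s(\tilde K_q)$, extract functorial truncations $\tilde\phi_q^\sS$ on $\Mod\text{-}M\Z^\sS$ and define $\tilde s_q^\sS:=\mathrm{cofib}(\tilde\phi_{q+1}^\sS\to \tilde\phi_q^\sS)$, composed with the termwise derived free module functor on diagrams. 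The compatibility $\tilde s_q\circ i_c^*\cong i_c^*\circ \tilde s_q^\sS$ follows because, by lemma~\ref{lem:FunctCatProps}(1), $i_c^*$ is simultaneously left and right Quillen, preserves (trivial) (co)fibrations, and satisfies $i_c^*\mathbf{F}^s(A)=A$ for $c\ge s$ and $=\pt$ otherwise; thus $i_c^*$ maps $\mathbf{F}^s(\tilde K_q)$-cellular objects to $\tilde K_q$-cellular objects, and commutes with the Bousfield cofibrant replacements up to natural weak equivalence by a direct check on the small object argument.

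The main obstacle is the comparison in Step~1, namely showing that performing right Bousfield localization inside $\Mod\text{-}M\Z$ with the cells $\tilde K_q$ recovers, after forgetting, the truncation functor $f_q^t$ associated to $K_q$ on $\SH(k)$, and that the unit $\sE\to M\Z\wedge\sE$ induces an equivalence on slices. Both facts rely on the multiplicative refinement of the slice filtration: that $s_q$ is lax monoidal and that slices of a ring spectrum are modules over $s_0$ of the ring spectrum. The proper reference is Pelaez's work \cite{Pelaez} (together with theorem~\ref{thm:RO}, which identifies $\Mod\text{-}M\Z$ with $\DM(k)$), and the argument reduces to checking that each $\tilde K_q$-colocal object, viewed as a symmetric $T$-spectrum, is $K_q$-colocal, which in turn follows from the fact that $F$ preserves arbitrary coproducts and cofiber sequences and hence preserves cellularity.
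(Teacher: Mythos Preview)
Your construction has a genuine gap at the comparison step. By your own argument, $RF\circ\tilde\phi_q\cong f_q^t\circ RF$ on $\Ho\,\Mod\text{-}M\Z$, and hence $RF(\tilde s_q(\sE))=RF(\tilde s_q^{\mathrm{mod}}(M\Z\wedge^L\sE))\cong s_q(M\Z\wedge\sE)$, not $s_q(\sE)$. The claim you need, that the unit map $\sE\to M\Z\wedge\sE$ induces an isomorphism on slices, is false. Take $\sE=\mS_k$ and $q=1$: since $M\Z\cong s_0\mS_k$, applying $f_1^t$ to the cofiber sequence $f_1^t\mS_k\to\mS_k\to M\Z$ shows $f_1^t M\Z=0$, hence $s_1(M\Z\wedge\mS_k)=s_1(M\Z)=0$, whereas $s_1(\mS_k)$ is nonzero (indeed torsion and nontrivial by lemma~\ref{lem:Torsion} and the computation in \S\ref{sec:VoevConj}). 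Pelaez's multiplicativity results give a lax monoidal structure on $s_*$, not a strong one; they do not yield the equivalence you invoke.

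The paper takes a different route that avoids smashing with $M\Z$ altogether. It invokes the machinery of Guti\'errez--R\"ondigs--Spitzweck--{\O}stv{\ae}r \cite[theorem 5.2]{GRSO}: one works in the combinatorial monoidal stable model category $\Spt_T^\Sigma(k)$ (resp.\ $\Spt_T^\Sigma(k)^\sS$) with the sequence of subcategories $\sC_q=\tau^{-\infty,q}\SH(k)$ (resp.\ the localizing subcategories generated by $\mathbf{F}^s(K_{-\infty,q})$). That theorem, using colored operads controlling $E_\infty$-algebras and their modules, shows that the cofibrant and fibrant replacement functors defining $s_*$ lift the $E_\infty$-structure on $\1$ to one on $s_*(\1)$ and simultaneously endow $s_*(\sE)$ with a canonical $s_*(\1)$-module structure. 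Restricting to the degree-zero piece and using Voevodsky's isomorphism $s_0(\mS_k)\cong M\Z$ gives the lift $\tilde s_q$ directly, with $RF\circ\tilde s_q\cong s_q$ by construction. The point is that the $M\Z$-module structure on $s_q(\sE)$ comes from the multiplicative structure of the slice filtration itself, not from first passing to $M\Z\wedge\sE$. For part (2) the paper runs the same argument in the functor category; the compatibility with $i_c^*$ then follows from the up-to-homotopy uniqueness of the operadic liftings in the GRSO construction, together with the observation (as you note) that $i_c^*$ is both left and right Quillen.
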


\begin{proof} 
(1) follows from \cite[theorem 5.2]{GRSO} applied  to the $E_\infty$ object $\mS^\Sigma_k$ in the combinatorial monoidal stable model category\footnote{$\Spt^\Sigma_T(k)$  is  combinatorial: use \cite[thm. 2.6.15]{Pelaez} and the definition of $\Spt^\Sigma_T(k)$ as the category of $T^{\wedge *}$-modules in symmetric sequences in the presheaf category $\Spc_\bullet(k):=(\Spc_\bullet)^{\Sm/k^\op}$.} $\Spt^\Sigma_T(k)$, and the sequence of full subcategories $\sC_q:=\tau^{-\infty,q}\SH(k)$ of $\SH(k)$. In order to apply the theorem, one uses Voevodsky's isomorphism $s_0\mS_k\cong M\Z$, and notes that 
$\sC_q$ satisfies the condition (A3) of  \cite{GRSO}  by theorem~\ref{thm:RBousLoc} and theorem~\ref{thm:truncation}; the remaining conditions are easy to verify (see \cite[\S 3.2]{GRSO}). In order to prove (2), we briefly recall the main points of the construction in  \cite{GRSO}:
\\
A) Let $\sD_1\subset \sD_0\subset \sD:= (\Ho\,\Spt_T^\Sigma(k))^\Z$ be the subcategories $\sD_0:=\prod_q\sC_q$ and $\sD_1:=\prod_q\sC_{q+1}$. The functor $c_0:\sD\to \sD_0$ is constructed as the right derived functor associated to a right Bousfield localization of  $(\Spt_T^\Sigma(k))^\Z$ and the functor $l_1:\sD_0\to \sD_0$ is the restriction to $\sD_0$ of the localization of $\sD$ with respect to $\sD_1$, which is realized as the left derived functor of a left Bousfield localization (again, of $(\Spt_T^\Sigma(k))^\Z$). Composing $l_1\circ c_0$ with the diagonal functor  $\Ho\,\Spt_T^\Sigma(k)\to (\Ho\,\Spt_T^\Sigma(k))^\Z$ gives the slice functor
\[
s_*=\prod_{q\in\Z}s_q:\Ho\,\Spt_T^\Sigma(k)\to (\Ho\,\Spt_T^\Sigma(k))^\Z.
\]
B) Let $E_\infty$ be the simplicial $E_\infty$ operad. It is shown that both the cofibrant replacement with respect to the right Bousfield localization in (A) and the fibrant replacement with respect to the left Bousfield localization in (A) induce a trivial fibration on the relevant simplicial mapping spaces (for the cofibrant replacement $Q$, one considers the map $\sHom(QE^{\otimes n}, QE)\to \sHom(QE^{\otimes n}, E)$ for $E$ an $E_\infty$-algebra in $\Spt_T^\Sigma(k)$ and a similar collection of maps for the fibrant replacement): this gives a unique up-to-homotopy lifting of the $E_\infty$-structure on $\1$ to an $E_\infty$-structure on  $s_*(\1)$.\\
C) Replacing the $E_\infty$ operad with the colored operad controlling modules over an $E_\infty$-algebra, it is shown that the cofibrant and fibrant replacement functors induce trivial fibrations on the relevant mapping spaces; this gives a canonical (up to homotopy) $s_*(\1)$-module structure to the functor $s_*$, which gives the desired lifting of $s_*$ to 
\[
\tilde{s}_*:\Ho\,\Spt_T^\Sigma(k)\to  \Ho\,\gr\Mod\text{-} s_*(\1);
\]
that is,  $RF^\Z\circ \tilde{s}_*\cong s_*$, where $F^\Z:\gr\Mod\text{-} s^\sS_*(\1)\to \Spt_T(k)^\Z$ is the evident forgetful functor. Restricting 
a graded $s_*(\1)$-module to a graded $s_0(\1)$-module and using Voevodsky's isomorphism $s_0(\1)\cong M\Z$ gives the functors $\tilde{s}_q$.

For (2), we use   \cite[\hbox{\it loc.\,cit.}]{GRSO} applied  to the combinatorial monoidal stable model category $\Spt_T^\Sigma(k)^\sS$, the $E_\infty$-object 
$\1^\sS={\bf F}^0(\mS^\Sigma_k)$ and the sequence of full subcategories $\sC^\sS_q$ of $\Ho\,(\Spt_T^\Sigma(k)^\sS)$,  with $\sC^\sS_q$ defined as the localizing subcategory of $\Ho\,(\Spt_T^\Sigma(k)^\sS)$ generated by the objects ${\bf F}^s(K_{-\infty, q})$, $s\in \sS$. Applying \cite[\hbox{\it loc.\,cit.}]{GRSO} gives the $E_\infty$-object $s^\sS_*(\1^\sS)$ in $\Ho\,(\Spt_T^\Sigma(k)^\sS)^{\N}$ and the functor 
\[
\tilde{s}^\sS_*:\Ho\,(\Spt_T^\Sigma(k)^\sS)\to\Ho\,\gr\Mod\text{-} s^\sS_*(\1^\sS).
\]
We let $\tilde{s}_q^\sS: \Ho\,(\Spt_T^\Sigma(k)^\sS)\to\Ho\,\Mod\text{-} s^\sS_0(\1^\sS)$ be the $q$th component of $\tilde{s}^\sS_*$.

The functor $\tilde{s}^\sS_*$ and the $E_\infty$-object $s^\sS_*(\1^\sS)$ are constructed using the same three steps (now named  (A)$^\sS$-(C)$^\sS$) after making the replacements described above. We write $c_0^\sS$, $l_1^\sS$, etc., for the corresponding constructions in this case.

Take $c\in \sS$.  Since $i_c^*$ is both a left and a right Quillen functor,    $i_c^*(\sC_q^\sS)=\sC_q$ for all $q$, $i_c^*\circ c_0^\sS\cong c_0\circ i_c^*$  and $i_c^*\circ l_1^\sS\cong l_1\circ i_c^*$,  giving a canonical isomorphism  $s_*\circ i_c^*\cong i_c^*\circ s^\sS_*$.
 
We have $\1=i_c^*(\1^\sS)$. In addition, the mapping spaces considered in (B), (B)$^\sS$, (C) and (C)$^\sS$ (as functors in the $E_\infty$-object $E$, module $M$ and the fibrant and cofibrant replacements) only depend on the operads chosen, and hence $i^*_c$ gives morphism from the maps shown to be a trivial fibration in (B)$^\sS$ and (C)$^\sS$ to the analogous ones in (B) and (C). By the up-to-homotopy uniqueness of the lifting of $E_\infty$-structures in  step (B), it follows  that $i_c^*(s_*^\sS(\1^\sS))$ is homotopy equivalent to $s_*(\1)$ as a $E_\infty$-object in $\Spt_T(k)^{\N}$, which thus gives an equivalence of $\Ho\,\gr\Mod\text{-} i_c^*s^\sS_*(\1^\sS)$ with $\Ho\,\gr\Mod\text{-} s_*(\1)$.  Via this equivalence, the up-to-homotopy uniqueness in (C) gives a canonical isomorphism of $i_c^*\tilde{s}^\sS_*(\sE)$ with 
$\tilde{s}_*(i_c^*(\sE))$ in $\Ho\,\gr\Mod\text{-} s_*(\1)$ for all $\sE\in \Spt_T^\Sigma(k)^\sS$; taking the restriction to graded $M\Z$-modules completes the construction of the natural isomorphism  $\tilde{s}_q\circ i^*_c\cong i_c^*\circ \tilde{s}_q^\sS$.
\end{proof}

We now return to the Adams-Novikov spectral sequence. Consider the distinguished triangle
\begin{equation}\label{eqn:ReducedMGL}
 \overline{\MGL}[-1]\to \mS_k\to \MGL\to \overline{\MGL} 
\end{equation}
Using the cell structure of $\MGL$, it is easy to see that the unit map $ \mS_k\to \MGL$ induces an isomorphism $s_0\mS_k\to s_0\MGL$ (see e.g. \cite[corollary 3.3]{Spitzweck}). Since $\MGL$ and $\mS_k$ are both in $\SH^\eff(k)$, it follows that $\overline{\MGL}$ also in $\SH^\eff(k)$ and that $s_0 \overline{\MGL} =0$. Thus $\overline{\MGL}$ is in $\Sigma_T\SH^\eff(k)$ and hence $\overline{\MGL}^{\wedge N}$ is in  $\Sigma^N_T\SH^\eff(k)$ for each $N\ge1$.

Let $\sq^n$ be the category associated to the partially ordered set  of subsets of $\{1,\ldots, n\}$, ordered by inclusion, $\sq^n_0$ the subcategory of non-empty subsets. By an $n$-cube in a category $\sC$, we mean a functor from $\sq^n$ to $\sC$. For an $n$-cube $I\mapsto \sE_I$ in $\Spt^\Sigma_T(k)$, we have the map of $n-1$-cubes $I\mapsto [\sE_{I}\to \sE_{I \amalg\{n\}}]$, $I\subset \{1,\ldots, n-1\}$.  We form the $T$-spectrum $\Tot_n\sE_*$ inductively in $n$ as the homotopy fiber of  $\Tot_{n-1}\sE_{*}\to \Tot_{n-1}\sE_{I\amalg\{n\}}$; $\Tot_0(\sE):=\sE$. We make a similar definition for $n$-cubes in $\Spt_T(k)$, $\Spt$ or $C(\Ab)$. 

Form the product  $[\mS_k\to  \MGL]^{\wedge n}$
as an $n$-cube in $\Spt^\Sigma_T(k)$,  giving us the object $\Tot[\mS_k\to  \MGL]^{\wedge n}$ in $\Spt^\Sigma_T(k)$. The distinguished triangle \eqref{eqn:ReducedMGL} defines an isomorphism of 
$\Tot[\mS_k\to  \MGL]^{\wedge n}$ with $\overline{\MGL}^{\wedge n}[-n]$ in $\SH(k)$.  In particular, we have 
\[
\tilde{s}_q\Tot[\mS_k\to  \MGL]^{\wedge n}\cong 0
\]
for $0\le q\le n-1$. 

Let $[\mS_k\to  \MGL]_0^{\wedge n}$ be the  $n$-cube formed from $[\mS_k\to  \MGL]^{\wedge n}$ by replacing the $\mS_k$ located at the vertex $\0$ with the 0-object. We thus have the homotopy cofiber sequence
\[
\Tot[\mS_k\to  \MGL]_0^{\wedge n}\to \Tot[\mS_k\to  \MGL]^{\wedge n}\to \Tot [\mS_k\to  0]^{\wedge n}
\]
As $\Tot [\mS_k\to  0]^{\wedge n}$ is isomorphic in $\SH(k)$ to $(\mS_k)^{\wedge n}=\mS_k$, this gives us the distinguished triangle in $\SH(k)$
\[
\mS_k\to \Tot[\mS_k\to  \MGL]_0^{\wedge n}[1]\to \overline{\MGL}^{\wedge n}[1]\to \mS_k[1]
\]
In particular, we have the isomorphism in $\Ho\,\Mod\text{-}M\Z$
\begin{equation}\label{eqn:MGLSlice1}
\tilde{s}_q\mS_k\cong \tilde{s}_q\Tot[\mS_k\to  \MGL]_0^{\wedge n}[1]
\end{equation}
for $0\le q<n$.

As $\tilde{s}_q$ is exact, we have
\begin{equation}\label{eqn:MGLSlice2}
\tilde{s}_q\Tot[\mS_k\to  \MGL]_0^{\wedge n}[1]\cong
\Tot \tilde{s}^\sS_q[\mS_k\to  \MGL]_0^{\wedge n}[1]
\end{equation}
Here   we use the functorial model $\tilde{s}_q^\sS$ for $\tilde{s}_q$ furnished by lemma~\ref{lem:Slice}. Furthermore, the value of $[\mS_k\to  \MGL]_0^{\wedge n}$ at $I\neq \0$ is $\MGL^{\wedge |I|}$, so   $\tilde{s}^\sS_q[\mS_k\to  \MGL]_0^{\wedge n}(I)=\tilde{s}_q(\MGL^{\wedge |I|})$. 

For a complex of abelian groups $C$, write $M\Z\otimes C$ for $M\Z\wedge_{H\Z}EM(C)$; via the R\"ondigs-{\O}stv{\ae}r equivalence $DM(k)\cong \Ho\,M\Z\text{-}\Mod$, $M\Z\wedge_{H\Z}EM(C)$ corresponds to $\Z^{tr}\otimes_\Z C$. We now apply the theorem of Hopkins-Morel \cite{Hoyois, HopkinsMorel}:
\begin{thm}[Hopkins-Morel,  \hbox{\cite[theorem 7.5]{Hoyois}}] There is an isomorphism 
\[
\tilde{s}_q\MGL\cong \Sigma^q_TM\Z\otimes MU_{2q}
\]
in $\Ho\,\Mod\text{-}M\Z$.
\end{thm}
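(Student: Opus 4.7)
The plan is to follow the strategy of Hopkins--Morel as executed by Hoyois, which identifies the slices of $\MGL$ by exhibiting $M\Z$ as an explicit quotient of $\MGL$ by a regular sequence of ``Lazard generators''. Our target is the isomorphism $\tilde s_q\MGL\cong\Sigma^q_T M\Z\otimes MU_{2q}$ in $\Ho\,\Mod\text{-}M\Z$, so the goal is to reduce the computation of slices of $\MGL$ to a purely ``coefficient'' computation controlled by the Lazard ring $L=MU_*$.

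First I would construct classes $a_i\in \pi_{2i,i}(\MGL)$ whose images under the orientation $\MGL\to H\Z$ (or equivalently under $\MGL\to M\Z$) vanish and which reduce under the complex realization to a polynomial set of generators of $MU_*=L$. These arise from the universal formal group law on $\MGL$ exactly as in the topological case: one picks a system of polynomial generators of $L$ and lifts them to elements of $\pi_{2*,*}(\MGL)$ using the Thom isomorphism and projective-bundle formula. Then I would set $I=(a_1,a_2,\ldots)$ and consider the cofiber $\MGL/I$ formed as an iterated cofiber (in $M\Z$-modules, or equivalently in symmetric $T$-spectra) of multiplication by the $a_i$.

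The central step is the \emph{Hopkins--Morel isomorphism}
\[
\MGL/I \;\simeq\; M\Z \quad\text{in }\SH(k).
\]
The forward map is the Thom class $\MGL\to M\Z$, which kills each $a_i$ since the latter reduce to zero in $\pi_{*,*}M\Z$. To show it is an equivalence I would argue slice by slice: both sides lie in $\SH^{\eff}(k)$, and one uses the fact that $s_0\MGL=s_0\mS_k=M\Z$ (already cited in the preceding paragraph) together with an inductive elimination of the $a_i$ to show $s_q(\MGL/I)=0$ for $q>0$. This is the step that genuinely requires characteristic zero: Hoyois invokes resolution of singularities (via Gabber-type arguments when necessary) to control the effective slices of the cofiber.

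Once $\MGL/I\simeq M\Z$ is in hand, the slices of $\MGL$ can be read off from the Koszul-type presentation: smashing with $\MGL$ over $M\Z$ and keeping track of the bigrading, one obtains a motivic Atiyah--Hirzebruch-style spectral sequence whose $E_1$-term identifies $\tilde s_q\MGL$ with $\Sigma^q_T M\Z$ tensored with the degree $2q$ part of the polynomial ring $\Z[a_1,a_2,\ldots]\cong L=MU_*$. Because the slice functor is exact and lax monoidal, and because the $a_i$ pull back from the corresponding topological generators, the identification is natural and yields $\tilde s_q\MGL\cong\Sigma^q_T M\Z\otimes MU_{2q}$. The hardest technical obstacle is the Hopkins--Morel quotient statement $\MGL/I\simeq M\Z$; all other steps are essentially formal manipulations of slices and Thom structures granted that identification.
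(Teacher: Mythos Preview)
Your proposal misreads what is being proved here. The paper does \emph{not} reprove the Hopkins--Morel theorem; it takes as input the statement from Hoyois that $s_q\MGL\cong\Sigma^q_T\EM(\Z^{tr}\otimes MU_{2q})$ in $\SH(k)$, and the content of the proof is purely the \emph{lift} of this isomorphism from $\SH(k)$ to $\Ho\,\Mod\text{-}M\Z$. This lift is a short formal argument: one proves the lemma $\tilde{s}_q(s_q\sE)\cong\tilde{s}_q(\sE)$ (by applying $\tilde{s}_q$ to $s_q\sE\leftarrow f_q\sE\to\sE$ and using that $RF$ detects isomorphisms), then uses that $MU_{2q}$ is free of finite rank to compute $\tilde{s}_q(\Sigma^q_T\EM(\Z^{tr}\otimes MU_{2q}))\cong\Sigma^q_TM\Z\otimes MU_{2q}$.

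By contrast, you are sketching the proof of Hoyois' theorem itself---the quotient $\MGL/(a_1,a_2,\ldots)\simeq M\Z$ and the deduction of $s_q\MGL$---which is a substantial undertaking that the paper simply cites. Worse, even granting all of that, your final step (``because the slice functor is exact and lax monoidal\ldots the identification is natural'') does not actually produce an isomorphism in $\Ho\,\Mod\text{-}M\Z$: lax monoidality of $s_*$ tells you nothing about how the specific lift $\tilde{s}_q$ (constructed via the colored-operad machinery of \cite{GRSO}) interacts with the Hopkins--Morel presentation. You would still need exactly the lemma the paper proves to pass from the $\SH(k)$-level isomorphism to the $M\Z$-module-level one. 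So your plan both reproves what is already cited and omits the one step that constitutes the paper's argument.
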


\begin{proof} In fact, the  theorem of Hopkins-Morel says that $s_q\MGL$ is isomorphic  to  $\Sigma^q_T\EM (\Z^{tr}\otimes MU_{2q})$ in $\SH(k)$. To achieve the isomorphism in $\Ho\,\Mod\text{-}M\Z$, we note the following:

\begin{lem}\label{lem:lifting} For $\sE\in\SH(k)$, there is a canonical isomorphism $\tilde{s}_q(s_q\sE)\cong \tilde{s}_q(\sE)$ in $\Ho\,\Mod\text{-}M\Z$.
\end{lem}

\begin{proof} Apply $\tilde{s}_q$ to the diagram $s_q\sE\leftarrow f_q\sE\to \sE$, giving the diagram 
\[
\tilde{s}_q(s_q\sE)\xleftarrow{\alpha}
 \tilde{s}_q(f_q\sE)\xrightarrow{\beta} \tilde{s}_q(\sE)
 \]
  in $\Ho\,\Mod\text{-}M\Z$. Applying $RF$ gives $s_q(s_q\sE)\leftarrow s_q(f_q\sE)\to s_q(\sE)$, which is easily seen to be a diagram of isomorphisms (in $\SH(k)$). But by definition of the weak equivalences in $\Mod\text{-}M\Z$, the map $RF$ detects isomorphisms, hence $\alpha$ and $\beta$ are isomorphisms in $\Ho\,\Mod\text{-}M\Z$.
 \end{proof}
 
 To complete the proof of the refined version of the Hopkins-Morel theorem, $s_0(\mS_k)\cong M\Z\cong \EM(\Z^{tr})$, hence $M\Z=\tilde{s}_0(\mS_k)\cong \tilde{s}_0(\EM(\Z^{tr}))$. Thus $\tilde{s}_q(\Sigma^q_T\EM(\Z^{tr}))\cong \Sigma^q_TM\Z$; as $MU_{2q}$ is a free finitely generated abelian group, $\Sigma^q_T\EM (\Z^{tr}\otimes MU_{2q})$ is just a finite direct sum of copies of $\Sigma^q_T\EM(\Z^{tr})$, giving the string of isomorphisms in $\Ho\,\Mod\text{-}M\Z$:
 \[
 \tilde{s}_q(\MGL)\cong \tilde{s}_q(s_q\MGL)\cong \tilde{s}_q(\Sigma^q_T\EM (\Z^{tr}\otimes MU_{2q}))\cong \Sigma^q_TM\Z\otimes MU_{2q}.
 \]
 \end{proof}

Using \cite[proposition 6.4]{Spitzweck} or \cite[theorem 7.5]{Hoyois} and applying lemma~\ref{lem:lifting},  the Hopkins-Morel theorem generalizes to give the isomorphism in $\Ho\,\Mod\text{-}M\Z$
\begin{equation}\label{eqn:HMIso}
\tilde{s}_q\MGL^{\wedge j}\cong \Sigma^q_TM\Z\otimes \pi_{2q}(MU^{\wedge j}).
\end{equation}

\begin{lem} Let $\<\Sigma^q_T M\Z\>$ denote the thick subcategory of  $\Ho\,\Mod\text{-}M\Z$ generated by 
$\Sigma^q_T M\Z$. Sending a complex $C\in C^b(\Ab)$ to $\Sigma^q_T M\Z\otimes C$ defines an equivalence of triangulated categories
$\Sigma^q_T M\Z\otimes(-):D^b(\Ab)\to \<\Sigma^q_T M\Z\>$
\end{lem}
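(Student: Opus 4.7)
My plan is to exhibit $F_q := \Sigma^q_T M\Z \otimes (-) = \Sigma^q_T(M\Z \wedge_{H\Z} EM(-))$ as an exact triangulated functor (being derived tensor product with a fixed object) which sends the unit $\Z \in D^b(\Ab)$ to the generator $\Sigma^q_T M\Z$. Its essential image is then automatically a triangulated subcategory containing $\Sigma^q_T M\Z$, so the task reduces to full faithfulness together with showing the essential image is thick.

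The central computation is
\[
\Hom_{\Ho\,\Mod\text{-}M\Z}(\Sigma^q_T M\Z, \Sigma^q_T M\Z[n]) \cong \Hom_{DM(k)}(\Z^{tr}(q)[2q], \Z^{tr}(q)[2q+n]) \cong H^n(\Spec k, \Z(0)),
\]
where the first isomorphism uses the R\"ondigs--{\O}stv{\ae}r equivalence (theorem~\ref{thm:RO}) that identifies $\Sigma^q_T M\Z$ with $\Z^{tr}(q)[2q]$, and the second is Voevodsky's cancellation theorem. For $k$ a field, this motivic cohomology group is $\Z$ concentrated in degree $0$, matching $\Hom_{D^b(\Ab)}(\Z, \Z[n])$. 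Since the exact functor $F_q$ preserves identities, the induced map in degree $0$ is the identity on $\Z$, and is trivially an isomorphism of zero groups in other degrees.

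To bootstrap full faithfulness from the generator, I would fix $D \in D^b(\Ab)$ and form the full subcategory $\sT(D)$ of those $C$ for which $F_q$ induces an isomorphism on $\Hom(C, D[n])$ for every $n$. The five-lemma makes $\sT(D)$ triangulated and closed under summands, and it contains $\Z$. Since $\Z$ has global dimension one, every bounded complex of abelian groups is obtained, after stupid truncation and five-lemma induction on the length of support, by finitely many triangles from shifts of $\Z$ and its summands; hence $\sT(D) = D^b(\Ab)$. Swapping the roles of $C$ and $D$ gives the analogous statement in the second variable. The essential image of the now fully faithful exact functor $F_q$ between Karoubian triangulated categories is a thick subcategory of $\Ho\,\Mod\text{-}M\Z$ containing $\Sigma^q_T M\Z$, and therefore by minimality it coincides with $\langle \Sigma^q_T M\Z \rangle$.

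The main obstacle is really just the hom-group computation on the generator, which I plan to dispatch via R\"ondigs--{\O}stv{\ae}r plus cancellation; once this is in hand, the rest is routine triangulated-category d\'evissage. One interpretive subtlety worth flagging is that, with ``thick'' meaning closed under triangles and summands but not arbitrary coproducts, the sensible reading of $D^b(\Ab)$ here is bounded complexes with finitely generated cohomology, which is what is required for the downstream application to $\pi_{2q}(NMU)^*$, a bounded complex of finite-rank free abelian groups.
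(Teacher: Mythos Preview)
Your proposal is correct and follows essentially the same line as the paper: the paper's proof consists solely of the hom computation $\Hom_{\Ho\,\Mod\text{-}M\Z}(\Sigma^q_T M\Z,\Sigma^q_T M\Z[n])=H^n(k,\Z(0))\cdot\id$, obtained via the R\"ondigs--{\O}stv{\ae}r equivalence and cancellation, and then declares that ``the lemma follows.'' You have simply made explicit the standard d\'evissage that the paper leaves implicit, and your remark about reading $D^b(\Ab)$ as bounded complexes with finitely generated cohomology is a sensible clarification that the paper does not address.
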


\begin{proof} Under the equivalence $\Ho\,\Mod\text{-}M\Z\cong \DM(k)$, $\Sigma^q_T M\Z$ gets sent to $\Z(q)[2q]$, and thus
\[
\Hom_{\Ho\,\Mod\text{-}M\Z}(\Sigma^q_T M\Z,\Sigma^q_T M\Z[n])= H^n(k,\Z(0))\cdot\id=\begin{cases}0&\text{ for }n\neq0\\\Z\cdot\id&\text{ for }n=0,\end{cases}
\]
from which the lemma follows.
\end{proof}

Using \eqref{eqn:HMIso} and this lemma, we may consider the $n$-cube $\tilde{s}^\sS_q[\mS_k\to  \MGL]_0^{\wedge n}$ as an $n$-cube in $D^b(\Ab)$; noting that each term in this  $n$-cube is actually in the heart of  $D^b(\Ab)$ for the standard $t$-structure, we may consider the  $n$-cube $\tilde{s}^\sS_q[\mS_k\to  \MGL]_0^{\wedge n}$ as an $n$-cube in $\Ab$, that is, we have an isomorphism  in $\Ho\,(\Mod\text{-}M\Z)^\sS$
\[
\tilde{s}^\sS_q[\mS_k\to  \MGL]_0^{\wedge n}\cong \Sigma^q_T M\Z\otimes (\pi_{2q}[\mS\to  MU]_0^{\wedge n})
\]
where $\pi_{2q}[\mS\to  MU]_0^{\wedge n}$ is the  $n$-cube in $\Ab$ formed by applying $\pi_{2q}$ termwise to the  $n$-cube $[\mS\to  MU]_0^{\wedge n}$ in $\Spt$.   This in turn gives the isomorphism in $\Ho\,\Mod\text{-}M\Z$
\begin{equation}\label{eqn:MGLSlice3}
\Tot\,\tilde{s}^\sS_q[\mS_k\to  \MGL]_0^{\wedge n}[1]\cong \Sigma^q_TM\Z\otimes(\Tot(\pi_{2q}[\mS\to  MU]_0^{\wedge n})[1]).
\end{equation}

We complete the proof of Voevodsky's conjecture by constructing a quasi-iso\-morphism $\beta: \pi_{2q}(NMU)^*\to \Tot(\pi_{2q}[\mS\to  MU]_0^{\wedge n})[1]$. 
This follows from a general fact about cosimplicial abelian groups. Namely, define the functor $p_n:\sq^n_0\to \Delta$ by identifying a non-empty subset $I$ of $\{1,\ldots, n\}$ with the ordered set $[|I|-1]$ via the unique order-preserving bijection $I\to [|I|-1]$, where we give $I$ the order induced by the opposite of the standard order on $\{1,\ldots, n\}$.  Given a cosimplicial abelian group $A^*:\Delta\to \Ab$ and an integer $n\ge1$, we may then form the $n$-cube of abelian groups $\sq^n(A^*)$ by composing $A^*$ with $p_n$ and filling in by setting  $\sq^n(A^*)(\0)=0$.

The following result is standard: Let $i_j:NA^j\to A^j$ be the inclusion.  For $j<n$,  let 
\[
\beta^j:NA^j\to \Tot_n\sq^n(A^*)[1]^j=\oplus_{I\subset \{1,\ldots, n\}, |I|=j} A^j
\]
be the map $(i_j,\ldots, i_j)$,  and let $\beta^j=0$ for $j\ge n$. 

\begin{lem}\label{lem:DK} The maps $\beta^j$ define a map of complexes
\[
\beta: NA^* \to \Tot_n\sq^n(A^*)[1]
\]
which is an isomorphism on $H^p$ for $0\le p<n-1$ and an injection for $p=n-1$. 
\end{lem}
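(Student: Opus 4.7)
The plan is to make $\Tot_n\sq^n(A^*)[1]$ explicit as a cochain complex of abelian groups and then verify both the chain-map property and the cohomology comparison by direct calculation. Unwinding the iterative definition of $\Tot_n$ (each stage being a homotopy fiber of complexes in $\Ab$, realized as a desuspended mapping cone) produces a concrete description of $\Tot_n\sq^n(A^*)[1]$ with summands indexed by subsets $I\subset\{1,\ldots,n\}$ and differentials assembled from cubical sums of the cube maps. By the construction of $p_n$ via the opposite-of-standard order, each cube map $\sq^n(A^*)(J\setminus\{i\})\to \sq^n(A^*)(J)$ is a specific coface $d^r$ of $A^*$, where $r$ records the position of $i$ within $J$ under that reversed order.

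Once the differentials are explicit, the chain-map property of $\beta$ reduces to the identity $d_{\Tot}\beta^p(x)=\beta^{p+1}d_{NA}(x)$ for $x\in NA^p$. Both sides expand into signed sums of cofaces of $x$, and the order-reversal built into $p_n$ is precisely what aligns the cubical signs with the alternating cosimplicial signs in $d_{NA}=\sum_i(-1)^i d^i$; the defining relations $s^p_j d^i=\ldots$ on $NA^p$ ensure that no spurious contributions appear. This is a finite combinatorial check.

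For the cohomology statement I would proceed by induction on $n$, using the defining iterative fiber sequence
\[
\Tot_n\sq^n(A^*)\to \Tot_{n-1}\sq^{n-1}(A^*)\to \Tot_{n-1}\bigl(\sq^n(A^*)\vert_{I\ni n}\bigr)
\]
which, after $[1]$-shifting and passing to cohomology, becomes a long exact sequence in which the inductive hypothesis controls the middle term. The third cube is an $(n-1)$-cube with value $A^{|J|}$ at $J\subset\{1,\ldots,n-1\}$ (nonzero even at $\emptyset$); a separate direct analysis of its $\Tot_{n-1}$ in low cohomological degrees, using that it only involves cofaces into $A^{\le n}$, shows it cannot obstruct an isomorphism on $H^p$ for $p<n-1$ or an injection on $H^{n-1}$. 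The base case $n=1$ is transparent: $\Tot_1\sq^1(A^*)[1]$ is $A^0$ concentrated in degree $0$ with $\beta^0=\id_{A^0}$, which automatically gives the injection on $H^0$.

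The main obstacle is sign-bookkeeping in the first step together with the control of the third term in the inductive fiber sequence; both are elementary but fiddly. A cleaner alternative, which I would pursue if the direct route becomes cumbersome, is to identify $\Tot_n\sq^n(A^*)[1]$ directly with the brutal truncation of $NA^*$ in cohomological degrees $\le n-1$, modified by a correction in top degree accounting for the cokernel of $\beta$ there; under such an identification the claims about $H^p$ become manifest and the induction is bypassed entirely.
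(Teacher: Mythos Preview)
The paper gives no proof of this lemma; it is prefaced by ``The following result is standard'' and then simply used. So there is no paper-side argument to compare against.

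Parts (a) and (b) of your plan are sound: unwinding $\Tot_n$ of an $n$-cube in $\Ab$ as an explicit complex with $\bigoplus$-summands and cubically signed differentials is routine, and your observation that the order-reversal built into $p_n$ is exactly what aligns the cubical signs with the alternating sum $\sum(-1)^i d^i$ is the correct mechanism for verifying that $\beta$ is a chain map.

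The inductive step (c), however, has a genuine gap. Write $\mathrm{proj}$ for the map $\Tot_n\sq^n(A^*)[1]\to\Tot_{n-1}\sq^{n-1}(A^*)[1]$ coming from the fibre sequence; then $\beta_{n-1}=\mathrm{proj}\circ\beta_n$. The back-face $(n-1)$-cube $J\mapsto A^{|J|}$ indeed has all edges given by cofaces $d^r$ with $r\ge 1$ (a d\'ecalage phenomenon), and one can show its $\Tot_{n-1}$ has vanishing $H^p$ for $p\le n-2$. But feeding this into the long exact sequence together with the inductive hypothesis yields that $\beta_n$ is an isomorphism on $H^p$ only for $p<n-2$; at $p=n-2$ one obtains merely an \emph{injection}, because the inductive hypothesis gives only an injection for $\beta_{n-1}$ there, and $\mathrm{proj}$ is likewise only injective (its cokernel maps into $H^{n-1}$ of the back face, which need not vanish). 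To upgrade to an isomorphism at $p=n-2$ you must compare $\mathrm{im}(\beta_{n-1})$ with $\mathrm{im}(\mathrm{proj})$ inside $H^{n-2}(\Tot_{n-1}\sq^{n-1}(A^*)[1])$, and your ``separate direct analysis'' does not address this. Already for $n=3$ the missing step is nontrivial.

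The cleanest repair is your own alternative: after making $\Tot_n\sq^n(A^*)[1]$ explicit, use the codegeneracies to split off the redundant copies of $A^j$ in each degree $j\le n-2$, exactly as in the Dold--Kan normalization, thereby exhibiting a degreewise retraction onto (the brutal truncation of) $NA^*$. This makes the cohomology comparison transparent and bypasses the delicate induction; the lemma's label is a strong hint that this is the intended ``standard'' route. If you prefer to keep the induction, you will need to strengthen the hypothesis to track the cokernel of $\beta_{n-1}$ at degree $n-2$ and match it against the boundary map, which is doable but fiddly.
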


Fix an integer $q\ge0$ and take $n$ to be any integer $n\ge q+2$. By lemma~\ref{lem:DK}, we have  maps of complexes
\begin{equation}\label{eqn:Complexes}
\beta:\pi_{2q}(NMU)^*\to \Tot \pi_{2q}[\mS\to  MU]_0^{\wedge n}[1]
\end{equation}
which is a   cohomology isomorphism  in degrees $\le n-2$. Also, $\pi_{2q}(NMU)^m=0$ for $m>q$. 

Let
$e_i:\pi_{2q}MU^{\wedge n-1}\to \pi_{2q}MU^{\wedge n}$ be the map induced by inserting the unit in the $i$th factor. By reason of degree, the map $\sum_{i=0}^n e_i:\oplus \pi_{2q}MU^{\wedge n-1}\to  \pi_{2q}MU^{\wedge n}$
is surjective. Thus, $H^{n-1}(\Tot \pi_{2q}[\mS\to  MU]_0^{\wedge n}[1])=0$, and   $\beta$ gives us our desired quasi-isomorphism.  

Combining this with \eqref{eqn:MGLSlice1}, \eqref{eqn:MGLSlice2} and \eqref{eqn:MGLSlice3} completes the proof of conjecture~\ref{conj:VoevSlice}.

 \end{proof}

\begin{appendix}
\section{Symmetric products}\label{sec;sym}
In this appendix, we discuss symmetric products in $\Spc(k)$. This follows Voevodsky's constructions in \cite{VoevMotivic}, but we use a less sophisticated approach, in that we do not consider any model category structures or use derived functors. For simplicity, we work over a field $k$ of characteristic zero.
 
Let $\Sch/k$ be the category of quasi-projective $k$-schemes and $\sC\subset \Sch/k$ be the  full subcategory of {\em connected} semi-normal quasi-projective $k$-schemes. We let $\Spc^\sC(k)$ denote the category of  presheaves of spaces on $\sC$, and $\Spc^\sC_\bullet(k)$ the category of  presheaves of pointed spaces on $\sC$. For $T\in \sC$ with a finite group $G$ acting on $T$, the quotient scheme $T/G$ exists and is in $\sC$. For $X\in \sC$, $n\ge0$ an integer, we have the $k$-scheme $\Sym^n X:=X^n/\Sigma_n$, where the symmetric group $\Sigma_n$ acts on $X^n$ by  permuting the factors.  For a pointed scheme $(X,x)$, we make $\Sym^n X$ a pointed scheme  with $\Sym^n x$ as base-point, denoted $\Sym^n_\bullet X$.

 To extend this to   $\sX\in \Spc^\sC(k)$, let  $(\sC, G)$ be the category of finite type  $k$-schemes $X$  with $G$-action, $G$ a finite group, such that each connected component of $X$ is in $\sC$ and $X/G$ is in $\sC$. This gives us  the corresponding presheaf category $\Spc^{(\sC,G)}(k)$.  We write $h_\sC:\sC^\op\to \Spc^\sC(k)$, $h_{(\sC,G)}:(\sC,G)^\op\to  \Spc^{(\sC,G)}(k)$ for the Yoneda embeddings.

The functor $triv:\sC\to (\sC,G)$ giving   $X\in\sC$ the trivial $G$-action yields the functor $triv_*:\Spc^{(\sC,G)}(k)\to \Spc^\sC(k)$, $triv_*(\sY):=\sY\circ triv$,  with left adjoint $triv^*:\Spc^\sC(k)\to \Spc^{(\sC,G)}(k)$. In fact, $triv^*=\pi_{G*}$, with $\pi_G:(\sC,G)\to \sC$ the functor $\pi_G(Y)=Y/G$. Therefore, $triv^*$ admits in turn a left  adjoint $triv_\#:\Spc^{(\sC,G)}(k)\to \Spc^\sC(k)$, this being the left Kan extension of $h_\sC\circ \pi_G^\op$.  We write $\sX/G$ for $triv_\#(\sX)$.  
 
Let $\sC^\natural$ be the full subcategory of $\Sch/k$ of semi-normal schemes, and $(\sC^\natural, G)$ the category of semi-normal schemes with $G$-action. We extend $\sX$ in $\Spc^\sC(k)$ to a presheaf on $\sC^\natural$ by defining $\sX(\amalg_iX_i):=\prod_i\sX(X_i)$. Let  $G\text{-}\Spc^\sC(k)$ be the category of presheaves of spaces with $G$-action on $\sC$. We let $G\text{-}h_\sC:(\sC,G)^\op\to G\text{-}\Spc^\sC(k)$ be the functor sending $Y\in (\sC,G)$ to  the representable presheaf  $h_\sC(Y)$, with $G$-action induced by the action on $Y$.

We extend $h_\sC$ to $\Sch/k^\op$ by letting $h_\sC(X)$ be the restriction to $\sC$ of the presheaf on $\Sch/k$ represented by $X$. As each $Y\in\sC$ is connected, $h_\sC$ sends disjoint union in $\Sch/k$ to coproducts in $\Spc^\sC(k)$.  We similarly extend $G\text{-}h_\sC$ and $h_{\sC,G}$ to  (the opposite of) the category of quasi-projective $G$-schemes,  $(\Sch/k, G)$.

For $\sX\in G\text{-}\Spc^\sC(k)$, define $\sX^G\in \Spc^{(\sC,G)}(k)$ as the presheaf 
\[
\sX^G(Y):=\Hom_{G\text{-}\Spc^\sC(k)}(G\text{-}h_\sC(Y), \sX), 
\]
giving the functor $(-)^G:G\text{-}\Spc^\sC(k)\to \Spc^{(\sC,G)}(k)$.
Note that $\sX^G(Y)=[\sX(Y)]^G$, where $G$ acts on $\sX(Y)$ by $g\cdot s=g_\sX\cdot (s\circ g_Y^{-1})$, with $g_\sX$ the action of $g$ on $\sX$ and $g_Y$ the action on $Y$.

 For $\sX\in G\text{-}\Spc^\sC(k)$, $\sX^G/G$ may be described as a colimit:
\begin{equation}\label{eqn:Colim}
(\sX^G/G)(X)=  \colim_{(Y,f:X\to Y/G)\in X/\pi_G} \sX^G(Y).
\end{equation}

Given finite groups $G_1, G_2$, we have the evident product functor
\[
\times:G_1\text{-}\Spc^\sC(k)\times G_2\text{-}\Spc^\sC(k)\to G_1\times G_2\text{-}\Spc^\sC(k);
\]
\eqref{eqn:Colim} gives the natural morphism
\begin{equation}\label{eqn:Product}
\sX_1^{G_1}/G_1\times \sX_2^{G_2}/G_2\to (\sX_1\times \sX_2)^{G_1\times G_2}/G_1\times G_2.
\end{equation}

For $\rho:H\to G$ a homomorphism of finite groups, we have the restriction-of-action functor $\rho^*:G\text{-}\Spc^\sC(k)\to H\text{-}\Spc^\sC(k)$,  and for $Y\in(\sC,H)$, we have  the induced $G$-scheme $\ind_H^GY:=G\times Y/H$, where  $H$ acts by $h\cdot(g,y):=(g\rho(h)^{-1}, h_Y(y))$. Using the isomorphisms 
\[
\sX^G(\ind_H^GY)\cong (\rho^*\sX)^H(Y),\ \ind_H^GY/G\cong Y/H, 
\]
\eqref{eqn:Colim} gives for $\sX\in G\text{-}\Spc^\sC(k)$ the natural map
\begin{equation}\label{eqn:Restriction}
(\rho^*\sX)^H/H\to \sX^G/G.
\end{equation}

We have   the functor $(-)^n:\Spc^\sC(k)\to \Sigma_n\text{-}\Spc^\sC(k)$ sending $\sX$ to $\sX^n$ with $\Sigma_n$-action permuting the factors. Define
\[
\Sym^n\sX:= (\sX^n)^{\Sigma_n}/\Sigma_n.
\]
This gives us a functor $\Sym^n:\Spc^\sC(k)\to \Spc^\sC(k)$; for $(\sX,x)$ a pointed space, define $\Sym^n_\bullet(\sX)$ to be $\Sym^n(\sX)$ (i.e., forget the base-point), pointed by $\Sym^nx$. We may restrict (via the inclusion $\Sm/k\hookrightarrow\sC^\natural$) to a presheaf on $\Sm/k$ to give $\Sym^n\sX\in \Spc(k)$, resp., $\Sym^n_\bullet\sX\in \Spc_\bullet(k)$.

Using  \eqref{eqn:Product} and \eqref{eqn:Restriction}, one constructs canonical sum and product maps 
\[
\Sym^n_\bullet\sX\times \Sym^m_\bullet\sX\to \Sym^{n+m}_\bullet\sX;\quad \Sym^n_\bullet\sX\wedge \Sym^m_\bullet\sY\to \Sym^{nm}_\bullet\sX\wedge\sY.
\]
Adding the base-point thus gives the sequence of ``stabilization" maps
\[
\sX=\Sym^1_\bullet(\sX)\xrightarrow{st_2}\Sym^2_\bullet(\sX)\xrightarrow{st_3}\ldots \xrightarrow{st_n}\Sym^n_\bullet(\sX)\xrightarrow{st_{n+1}}\ldots
\]
and the infinite symmetric product $\Sym^\infty_\bullet\sX:=\colim_n\Sym^n_\bullet\sX$.

\begin{lem}\label{lem:FiniteTypeSym}  
1. Let $W$ be a quasi-projective $k$-scheme with $G$-action. Then $G\text{-}h_\sC(W)^G\in \Spc^{\sC,G}(k)$ is represented by  the $G$-scheme $W$ and $(G\text{-}h_\sC(W)^G)/G$ is represented by $W/G$.\\
2. Let $Z$ be a quasi-projective $k$-scheme. Then the scheme $\Sym^nZ$ represents $\Sym^nh_\sC(Z)$. \end{lem}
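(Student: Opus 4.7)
The plan is to prove (1) first and deduce (2) by specializing to $G = \Sigma_n$ and $W = Z^n$.

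For the first claim of (1), I would invoke the Yoneda lemma in the presheaf category $G\text{-}\Spc^\sC(k)$: a natural transformation $G\text{-}h_\sC(Y) \to G\text{-}h_\sC(W)$ of $G$-presheaves corresponds under ordinary Yoneda (on the underlying presheaves of sets) to a morphism of schemes $Y \to W$, and the $G$-equivariance condition on the natural transformation translates precisely to $G$-equivariance of this morphism. Equivalently, in the formula $\sX^G(Y) = [\sX(Y)]^G$ given in the text, taking $\sX = G\text{-}h_\sC(W)$, the $G$-action $g \cdot s = g_W \circ s \circ g_Y^{-1}$ has fixed set exactly the $G$-equivariant morphisms $Y \to W$. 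Hence $G\text{-}h_\sC(W)^G(Y) = \Hom_{(\Sch/k, G)}(Y, W)$, which is the functor of points of $W$ as a $G$-scheme.

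For the second claim of (1), I would use the adjunction $triv_\# \dashv triv^*$ together with the identification $triv^* = \pi_{G*}$. For any $\sZ \in \Spc^\sC(k)$,
\[
\Hom_{\Spc^\sC}\bigl((G\text{-}h_\sC(W)^G)/G,\,\sZ\bigr) = \Hom_{\Spc^{(\sC,G)}}\bigl(G\text{-}h_\sC(W)^G,\,triv^*\sZ\bigr);
\]
when $W \in (\sC, G)$, Yoneda evaluates the right-hand side to $(triv^*\sZ)(W) = (\pi_{G*}\sZ)(W) = \sZ(W/G) = \Hom_{\Spc^\sC}(h_\sC(W/G),\sZ)$. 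A second application of Yoneda then yields $(G\text{-}h_\sC(W)^G)/G \cong h_\sC(W/G)$. For a general quasi-projective $W$ I would reduce to this case by first covering $W$ with $G$-stable quasi-affine opens (possible since each $G$-orbit is finite), then semi-normalizing (functorial and compatible with the finite $G$-action in characteristic zero), and finally decomposing into $G$-orbits of connected components, each of the form $\ind_H^G W_0$ with $W_0$ and $W_0/H$ both in $\sC$; the naturality maps \eqref{eqn:Product} and \eqref{eqn:Restriction} reassemble the pieces.

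Part (2) is the specialization $G = \Sigma_n$, $W = Z^n$. The Yoneda embedding preserves products, so $h_\sC(Z)^n = h_\sC(Z^n)$ as presheaves of $\Sigma_n$-sets with $\Sigma_n$ permuting factors on both sides. Applying part (1) then gives $\Sym^n h_\sC(Z) = (h_\sC(Z^n))^{\Sigma_n}/\Sigma_n = h_\sC(Z^n/\Sigma_n) = h_\sC(\Sym^n Z)$. The main obstacle is the case $W \notin (\sC, G)$ in the second half of (1): direct Yoneda breaks down there, and one must verify that the combined reduction (cover, semi-normalize, decompose by orbits of components) interacts correctly with the functors $(-)^G$ and $(-)/G$ via the induction and product morphisms \eqref{eqn:Product} and \eqref{eqn:Restriction} already established in the appendix.
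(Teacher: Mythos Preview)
Your argument for the first assertion in (1), for the case $W\in(\sC,G)$ in the second assertion, and for deducing (2) from (1) all match the paper's proof essentially verbatim. The only divergence is in the reduction of a general quasi-projective $W$ to the case $W\in(\sC,G)$, where your proposed route is more elaborate than needed and in one place misaimed.

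The paper's reduction is short. First pass to the semi-normalization $W^{sn}$, using that $h_\sC$ and $h_{(\sC,G)}$ are insensitive to semi-normalization and that $(W/G)^{sn}=W^{sn}/G$. Then write the semi-normal $W$ as $\amalg_i W_i$, where each $W_i$ is a $G$-orbit of connected components. Each $W_i$ already lies in $(\sC,G)$: its components are in $\sC$ by semi-normality, and $W_i/G$ is connected because $G$ acts transitively on the components of $W_i$. Now one only needs that $h_\sC$ and $h_{(\sC,G)}$ send disjoint unions to coproducts (as noted earlier in the appendix) and that $triv_\#=(-)/G$, being a left adjoint, preserves coproducts; this immediately gives $h_{(\sC,G)}(W)/G\cong\amalg_i h_\sC(W_i/G)\cong h_\sC(W/G)$.

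Your step of covering by $G$-stable quasi-affine opens is unnecessary and would actually cause trouble: $h_\sC$ is a presheaf, not a sheaf, so $h_\sC(W)$ cannot be reconstructed from an open cover. Likewise, the maps \eqref{eqn:Product} and \eqref{eqn:Restriction} concern products and change of group, not disjoint unions, so they are not the relevant reassembly tool; ordinary coproduct-preservation is all that is used.
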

\begin{proof} For (1), if $Y$ is in $(\sC,G)$, then $G\text{-}h_\sC(W)^G(Y)$   is just the set of $G$-equivariant maps $f:W\to Y$ in $\Sch/k$, so $G\text{-}h_\sC(W)^G\cong h_{\sC,G}(W)$. 

Letting $W^{sn}\to W$ be the semi-normalization of $W$, the $G$-action on $W$ lifts uniquely to a $G$-action on $W^{sn}$ and we have $G\text{-}h_\sC(W^{sn})=G\text{-}h_\sC(W)$. Similarly, the semi-normalization of $W/G$ is $W^{sn}/G$, so we may assume that $W$ is semi-normal. 
Then $G\text{-}h_\sC(W)^G\cong h_{\sC,G}(W)$, so we have $G\text{-}h_\sC(W)^G/G\cong h_{\sC,G}(W)/G$. For $W\in (\sC,G)$, the adjoint property for $triv_\#$ gives a canonical isomorphism $h_{\sC,G}(W)/G\cong h_\sC(W/G)$; in general,  we may write $W=\amalg_iW_i$ with each $W_i$ in $(\sC,G)$,  from which follows 
$h_{\sC,G}(W)/G\cong h_\sC(W/G)$.   (2) follows from (1) applied to $W=Z^n$. 
\end{proof}
Via this lemma, we may denote the various presheaves represented by quasi-projective schemes or $G$-schemes $W$ simply by $W$, and also write $\Sym^nW$ for the presheaf $\Sym^nh_\sC(W)$,  leaving the context to determine the precise meaning. We do the same in the pointed setting.

Take $(X,x)$ a pointed quasi-projective scheme  and $A\subset X$ a reduced closed  subscheme containing $x$, giving us the pointed presheaf $X/A:=h_\sC(X)/h_\sC(A)$ on $\sC$.  We will need to relate $\Sym^n_\bullet X$ and $\Sym^n_\bullet X/A$. For this, let  $\sigma_{1,n-1}:A\times \Sym^{n-1}_\bullet X\to \Sym^n_\bullet X$
be the sum map and let $\tilde\pi_{1,n-1}: A\times \Sym^{n-1}_\bullet X\to \Sym^{n-1}_\bullet X/A$
be the projection $A\times \Sym^{n-1}X\to  \Sym^{n-1}X$ followed by the quotient map $ \Sym^{n-1}X\to 
 \Sym^{n-1}_\bullet X/A$. Since $\sigma_{1,n-1}$ is a finite morphism, we may define the pointed closed subscheme $\Sym^n_\bullet (X,A)$ of $\Sym^{n}_\bullet X$ as the reduced image of $\sigma_{1,n-1}$.

 \begin{lem} \label{lem:SymInd} Let $st_n:\Sym^{n-1}_\bullet X/A\to \Sym^n_\bullet X/A$ be the stabilization map. There is a commutative co-cartesian diagram
 \begin{equation}\label{eqn:SymInd}
 \xymatrix{
 \Sym^n_\bullet(X,A)\ar[r]^i\ar[d]_{\pi_n}&\Sym^n_\bullet X\ar[d]\\
\Sym^{n-1}_\bullet X/A\ar[r]_{st_n}& \Sym^n_\bullet X/A
}
\end{equation}
in $\Spc_\bullet(k)$, with $\pi_n\circ \sigma_{1,n-1}=\tilde\pi_{1,n-1}$.
\end{lem}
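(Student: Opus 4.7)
The plan is to construct $\pi_n$ by descent through $\sigma_{1,n-1}$, verify commutativity directly, and then deduce the pushout property by a cofiber comparison. To define $\pi_n$: by construction $\Sym^n_\bullet(X,A)$ is the reduced scheme-theoretic image of the finite morphism $\sigma_{1,n-1}\colon A\times \Sym^{n-1}_\bullet X\to \Sym^n_\bullet X$, so $\sigma_{1,n-1}$ factors as $\bar\sigma\colon A\times \Sym^{n-1}_\bullet X\twoheadrightarrow \Sym^n_\bullet(X,A)$ followed by the closed immersion $i$. I would then check that $\tilde\pi_{1,n-1}$ is constant on the geometric fibers of $\bar\sigma$: over a point $[y_1,\dots,y_n]$ with a unique $y_j\in A$ the fiber is a singleton, and over a point with two or more such coordinates every lift $(y_j,[y_1,\dots,\widehat{y_j},\dots,y_n])$ still has an $A$-coordinate in the residual tuple, hence is sent to the base point of $\Sym^{n-1}_\bullet X/A$. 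This gives $\pi_n$ as a morphism of (pointed) presheaves, with $\pi_n\circ\sigma_{1,n-1}=\tilde\pi_{1,n-1}$ built into the construction.

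Commutativity of \eqref{eqn:SymInd} is then checked by precomposing both paths with the epimorphism $\sigma_{1,n-1}$: the composite $\Sym^n_\bullet(X,A)\to\Sym^n_\bullet X\to \Sym^n_\bullet(X/A)$ sends $\sigma_{1,n-1}(a,[y_1,\dots,y_{n-1}])$ to $[\bar a,\bar y_1,\dots,\bar y_{n-1}]=st_n([\bar y_1,\dots,\bar y_{n-1}])$ since $\bar a=\ast$ for $a\in A$, while the other composite is $st_n\circ\pi_n\circ\sigma_{1,n-1}=st_n\circ\tilde\pi_{1,n-1}$, returning the same class. For the pushout property I would use that colimits in $\Spc_\bullet(k)$ are computed pointwise on $\Sm/k$ and that $i$ is a pointwise monomorphism (being a closed immersion of representables), reducing the claim to the statement that the induced map on pointed cofibers
\[
\Sym^n_\bullet X\big/\Sym^n_\bullet(X,A)\ \longrightarrow\ \Sym^n_\bullet(X/A)\big/st_n(\Sym^{n-1}_\bullet(X/A))
\]
is an isomorphism in $\Spc_\bullet(k)$.

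Both cofibers admit a stratification by the number of ``distinguished'' coordinates in an unordered tuple: on the left the image of $\Sym^n_\bullet(X,A)$ collects tuples with at least one $A$-coordinate, while on the right the image of $st_n$ collects tuples with at least one base-point coordinate in $X/A$. Since an element of $(X/A)(Y)$ is the base point precisely when the underlying morphism $Y\to X$ factors through $A$, these strata correspond under the natural map, and collapsing them on each side produces the same reduced $n$-fold symmetric-product presheaf, wedged with a base point. The main obstacle will be making this stratum-by-stratum identification rigorous in the non-representable pointed presheaf $\Sym^n_\bullet(X/A)$: since $\Sym^n$ does not commute with arbitrary colimits, one must unwind the colimit formula \eqref{eqn:Colim} for $((X/A)^n)^{\Sigma_n}/\Sigma_n$ and verify that, after quotienting by the ``at least one distinguished coordinate'' strata, only the ``no distinguished coordinate'' stratum survives in each colimit diagram, with the two surviving diagrams canonically isomorphic via the map induced by $X\to X/A$.
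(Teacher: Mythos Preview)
Your construction of $\pi_n$ has a real gap. You claim that since $\bar\sigma\colon A\times\Sym^{n-1}_\bullet X\twoheadrightarrow \Sym^n_\bullet(X,A)$ is a surjection and $\tilde\pi_{1,n-1}$ is constant on its geometric fibres, the map descends to a morphism of presheaves on $\Sm/k$. But a finite scheme-theoretic surjection is \emph{not} an epimorphism of presheaves: a $Y$-point of $\Sym^n_\bullet(X,A)$ with $Y$ smooth need not lift to a $Y$-point of $A\times\Sym^{n-1}_\bullet X$, so ``constant on fibres'' gives you nothing to work with. (Concretely, a degree-$n$ effective cycle on $Y\times X$ meeting $Y\times A$ need not split off a degree-one piece supported on $Y\times A$ over all of $Y$.) This is not a technicality you can patch by sheafifying; the difficulty is intrinsic to comparing the representable object $\Sym^n_\bullet(X,A)$ with the non-representable $\Sym^{n-1}_\bullet(X/A)$.

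The paper sidesteps this by a different organisation. It first produces the obviously co-cartesian $\Sigma_n$-equivariant square $(X,A)^n\hookrightarrow X^n$, $(X/A,\pt)^n\hookrightarrow (X/A)^n$, checks that taking $\Sigma_n$-invariants preserves it (because the horizontal maps are monomorphisms), and then applies the left adjoint $(-)/\Sigma_n$ to get a co-cartesian square with $\Sym^n_\bullet(X,A)$ in the upper left (by lemma~\ref{lem:FiniteTypeSym}) and $((X/A,\pt)^n)^{\Sigma_n}/\Sigma_n$ in the lower left. All the work then goes into identifying this last object with $\Sym^{n-1}_\bullet(X/A)$, and it is exactly here that the paper confronts the lifting problem you avoided: given $Y\in\Sm/k$ and a class in $((X/A,\pt)^n)^{\Sigma_n}/\Sigma_n(Y)$, one represents it via the colimit formula \eqref{eqn:Colim} by a $\Sigma_n$-scheme $Z$ with a map $Y\to Z/\Sigma_n$, pulls back, normalises, and uses Zariski's main theorem (crucially, $Y$ is smooth) to arrange $Z/\Sigma_n\cong Y$; then one finds a component on which the last coordinate is the base-point and restricts to a $\Sigma_{n-1}$-orbit. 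Your ``stratum-by-stratum'' sketch names this obstacle but does not resolve it; the argument above is what is actually needed, and it does not follow from fibrewise considerations.
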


\begin{proof} Let  $(X,A)^n$ be the  reduced closed subscheme of $X^n$ of tuples $(x_1,\ldots, x_n)$ with at least one $x_i$ in $A$ and  let $(X/A,\pt)^n\subset (X/A)^n$ be the   subpresheaf of   ``points" $(y_1,\ldots, y_n)$ such that at least one of the $y_i$ is the base-point. The quotient map $X^n\to (X/A)^n$ restricted to $(X,A)^n$ defines the map
$\hat\pi_{1,n}:(X,A)^n\to (X/A,\pt)^n$.

One sees by evaluation on $Y\in \sC$ that the diagram in $\Sigma_n\text{-}\Spc^\sC_\bullet(k)$
\[
 \xymatrix{
(X,A)^n\ar[r]^-i\ar[d]_{\hat\pi_n}& X^n\ar[d]\\
(X/A,\pt)^n\ar[r]_-j&(X/A)^n}
\]
is co-cartesian and $i$ is a monomorphism. From this, it follows that for each $Y\in (\sC,\Sigma_n)$, the diagram of pointed $\Sigma_n$-sets
\[
 \xymatrix{
(X,A)^n(Y)\ar[r]^-{i_Y}\ar[d]_{\hat\pi_{nY}}& X^n(Y)\ar[d]\\
(X/A,\pt)^n(Y)\ar[r]_-{j_Y}&(X/A)^n(Y)}
\]
is co-cartesian, and $i_Y$ and $j_Y$ are monomorphisms. This implies that the diagram of ${\Sigma_n}$-invariants
\[
 \xymatrix{
(X,A)^n(Y)^{\Sigma_n}\ar[r]^-{i_Y}\ar[d]_{\hat\pi_{nY}}& X^n(Y)^{\Sigma_n}\ar[d]\\
(X/A,\pt)^n(Y)^{\Sigma_n}\ar[r]_-{j_Y}&(X/A)^n(Y)^{\Sigma_n}}
\]
is co-cartesian as well, hence the diagram 
\[
 \xymatrix{
((X,A)^n)^{\Sigma_n}\ar[r]^-i\ar[d]_{\hat\pi_n}& (X^n)^{\Sigma_n}\ar[d]\\
((X/A,\pt)^n)^{\Sigma_n}\ar[r]_-j&((X/A)^n)^{\Sigma_n}}
\]
is co-cartesian in $\Spc_\bullet^{\sC,\Sigma_n}(k)$. Applying the left adjoint $(-)/\Sigma_n$ thus gives the co-cartesian diagram
\[
 \xymatrix{
((X,A)^n)^{\Sigma_n}/\Sigma_n\ar[r]^-i\ar[d]_{\hat\pi_n}& \Sym^n_\bullet X\ar[d]\\
((X/A,\pt)^n)^{\Sigma_n}/\Sigma_n\ar[r]_-j&\Sym^n_\bullet X/A.}
\]
By lemma~\ref{lem:FiniteTypeSym}, $((X,A)^n)^{\Sigma_n}/\Sigma_n$ is represented by the pointed closed subscheme $(X,A)^n/\Sigma_n$ of $\Sym^n_\bullet X$, i.e., by $\Sym^n_\bullet(X,A)$. 

We claim that there is an isomorphism $((X/A,\pt)^n)^{\Sigma_n}/\Sigma_n\cong \Sym^{n-1}_\bullet(X/A)$ (after restricting to presheaves on $\Sm/k$) so that the map $j$ becomes the stabilization map.  

To see this, we first define a morphism $\psi:\Sym^{n-1}_\bullet(X/A)\to ((X/A,\pt)^n)^{\Sigma_n}/\Sigma_n$.  Let $f:Y\to (X/A)^{n-1}$ be a $\Sigma_{n-1}$-equivariant map with $Y\in (\sC,\Sigma_{n-1})$; write $f=(f_1, \ldots, f_{n-1})$. Letting $\Sigma_{n-1}$ act on $(X/A)^n$ via the first $n-1$ factors, extend $f$ to the $\Sigma_{n-1}$-equivariant map  $f_*:Y\to (X/A, \pt)^n$, $f_*:= (f_1, \ldots, f_{n-1}, f_n)$, with $f_n$ the constant map to the base-point.  $f_*$ induces  the $\Sigma_n$-equivariant map $\ind f:\ind_{\Sigma_{n-1}}^{\Sigma_n}Y\to (X/A, \pt)^n$, giving via \eqref{eqn:Colim} the map 
\[
\ind f/\Sigma_n:\ind_{\Sigma_{n-1}}^{\Sigma_n}Y/\Sigma_n\to ((X/A,\pt)^n)^{\Sigma_n}/\Sigma_n. 
\]
Via the isomorphism $Y/\Sigma_{n-1}\cong \ind_{\Sigma_{n-1}}^{\Sigma_n}Y/\Sigma_n$, sending $f/\Sigma_n$ to $\ind f/\Sigma_n$ passes to the colimit defining $\Sym^{n-1}_\bullet(X/A)$ via \eqref{eqn:Colim}, giving the map $\psi$.

We now define an inverse to $\psi$, but only as presheaves on $\Sm/k$. Take $Y$ in $\Sm/k$, irreducible, and let $g:Y\to ((X/A,\pt)^n)^{\Sigma_n}/\Sigma_n$ be a map in $\Spc(k)$. Using  \eqref{eqn:Colim} to describe  $((X/A,\pt)^n)^{\Sigma_n}/\Sigma_n$, there is a $Z\in (\sC,\Sigma_n)$, a $\Sigma_n$-equivariant map  $f:Z\to (X/A,\pt)^n$ and a map $\tilde{g}:Y\to Z/\Sigma_n$ representing $g$.

Let $Z_Y=[Z\times_{Z/\Sigma_n}Y]_\red$. We claim that the   map $p:Z_Y/\Sigma_n\to Y$ induced by $p_2$ is an isomorphism. Indeed, $p$ is finite, hence proper, and evidently a bijection on the underlying topological spaces. Thus $p$ is a homeomorphism and hence $Z_Y/\Sigma_n$ is irreducible; as $Z_Y$ is reduced, $Z_Y/\Sigma_n$ is integral.  Since the characteristic is zero, $p$ is birational. Since $Y$ is smooth, $p$ is an isomorphism by Zariski's main theorem.  Replacing $Z$ with $Z_Y$ and changing notation, so we may assume that $Z/\Sigma_n$ is smooth and irreducible and the map $g:Y\to Z/\Sigma_n$ is an isomorphism.

Let $\mu:Z^N\to Z$ be the normalization of $Z$. Then the $\Sigma_n$-action on $Z$ lifts to a $\Sigma_n$-action on $Z^N$, and the map on the quotients $\mu/\Sigma_n:Z^N/\Sigma_n\to Z/\Sigma_n=Y$ is thus finite and birational. As $Y$ is smooth, $Z^N/\Sigma_n\to Z/\Sigma_n$ is an isomorphism by Zariski's main theorem. Thus we may assume that $Z$ is normal; in particular, $Z$ is a disjoint union of its irreducible components. 

The map $f:Z\to (X/A,\pt)^n$ may be written as $f=(f_1,\ldots, f_n)$, $f_i:Z\to X/A$. We suppose that $f$ is not the map to the base-point. From the definition of $X/A$ as a quotient of $X$, it follows that the set of points $z\in Z$ such that $f_i(z)=\pt$ is a closed subset. Thus, for   $Z_1$ an irreducible component of $Z$, there is an $i$ such that $f_i(Z_1)=\pt$. As   $\Sigma_n$ acts on  $(X/A,\pt)^n$  by permuting the factors,   we may choose $Z_1$ so  that $f_n(Z_1)=\pt$. Letting $Z^*$ be the $\Sigma_{n-1}$-orbit of $Z_1$, where $\Sigma_{n-1}$ is identified with the subgroup of $\Sigma_n$ fixing $n$, we see that $f_n(Z^*)=\pt$ and the evident map $\ind_{\Sigma_{n-1}}^{\Sigma_n}Z^*\to Z$ is an isomorphism. This gives the isomorphism
$Z^*/\Sigma_{n-1}\cong Z/\Sigma_n\cong Y$. The $\Sigma_{n-1}$-equivariant map $(f_1,\ldots, f_{n-1}):Z^*\to (X/A)^{n-1}$ and the isomorphism $Y\cong Z^*/\Sigma_{n-1}$ gives the map $\phi_Y(g):Y\to \Sym^{n-1}_\bullet(X/A)$; in case $f(Z)=\pt$, we define $\phi_Y(g)$ to be the map to the base-point.

One checks that sending $g$ to $\phi_Y(g)$ gives a well-defined map 
\[
\phi_Y:((X/A,\pt)^n)^{\Sigma_n}/\Sigma_n(Y)\to \Sym^{n-1}_\bullet(X/A)(Y), 
\]
natural in $Y$, and thus defines a map of presheaves on $\Sm/k$,  
\[
\phi:((X/A,\pt)^n)^{\Sigma_n}/\Sigma_n\to \Sym^{n-1}_\bullet(X/A), 
\]
which is easily seen to be inverse to $\psi$,  completing the proof.
\end{proof}

Let $X$ be a finite type $k$-scheme. We have the presheaf of abelian groups on $\Sm/k$, $\Z^{tr}(X)$, with value $\Z^{tr}(X)(Y)$ the finite correspondences from $Y$ to $X$, that is, the free abelian group on integral closed subschemes $W\subset Y\times_kX$ which are finite over $Y$ and dominate an irreducible component of $Y$. Replacing the free abelian group with the monoid of sums $\sum_in_iW_i$, $n_i\ge0$, gives the subpresheaf of monoids $\Z^{tr}_\eff(X)$. We may also consider the presheaf of degree $n$ correspondences $L_n(X)$, this being the presheaf of sets on $\Sm/k$ which for irreducible $Y$ is the set of finite sums $\sum_in_iW_i$ with $\sum_in_i\Deg(W_i/Y)=n$ ($L_0(X)=
\{0\}$). For $Y=\amalg_jY_j$ with each $Y_j$ irreducible, $L_n(Y):=\prod_jL_n(Y_j)$. If $(X,x)$ is a pointed scheme, we point $L_n(X)$ with $n\cdot x\times Y$ the base-point in $L_n(X)(Y)$. 

Let $(X,x)$ be a pointed scheme in $\sC$, $A\subset X$ a reduced closed subscheme containing $x$. We let $\Z^{tr}(X/A):=\Z^{tr}(X)/\Z^{tr}(A)$. Define $\Z^{tr}_\eff(X/A)$ to be the quotient of $\Z^{tr}_\eff(X)$ by relation induced by the quotient map $\Z^{tr}(X)\to \Z^{tr}(X/A)$. Concretely, $W\sim W'$ in $\Z^{tr}_\eff(X)(Y)$ when $W-W'$ is in $\Z^{tr}(A)(Y)$. 

We have the evident isomorphisms
\[
\Z^{tr}(X)\cong \Z^{tr}_\eff(X)^+,\quad \Z^{tr}(X/A)\cong \Z^{tr}_\eff(X/A)^+,
\]
where $(-)^+$ denotes group completion.

Define quotients $q_n:L_n(X)\to L_n(X/A)$, and stabilization maps 
\[
st_n: L_{n-1}(X/A)\to L_n(X/A)
\]
inductively as follows: $L_0(X/A)=\pt$, $q_1:L_1(X)\to L_1(X/A)$ is the quotient map $X\to X/A$. Having defined these for $j=0,\ldots, n-1$, let 
$\pi_j:L_j(A)\times L_{n-j}(X)\to L_{n-1}(X/A)$ be the composition
\[
L_j(A)\times L_{n-j}(X)\xrightarrow{p_2}L_{n-j}(X)\xrightarrow{q_{n-j}}L_{n-j}(X/A)\xrightarrow{st}L_{n-1}(X/A),
\]
where $st$ is the composition of the stabilization maps.  Define $L_n(X/A)$, $q_n$ and $st_n$   by requiring the diagram 
\begin{equation}\label{eqn:CoCartDefn}
\xymatrix{
\vee_{j=1}^nL_j(A)\times L_{n-j}(X)\ar[r]^-\sigma\ar[d]_\pi&L_n(X)\ar[d]^{q_n}\\
L_{n-1}(X/A)\ar[r]_{st_n}& L_n(X/A)}
\end{equation}
to be co-cartesian.  

It follows by an easy induction that the stabilization map $st_n$  is a monomorphism. The sum maps for $L_*(X)$   induce sum maps $L_n(X/A)\times L_m(X/A)\to L_{n+m}(X/A)$. 

\begin{lem} \label{lem:SymTransfer} Let  $(X, x)$ be a pointed scheme in $\sC$, and let $A\subset X$ be a reduced closed subscheme containing $x$.\\
 1. The system of maps $L_n(X/A)\to L_{n+1}(X/A)$ and $L_n(X)\to \Z^{tr}_\eff(X)$  induce an isomorphism in $\Spc_\bullet(k)$
\[
\colim_nL_n(X/A)\to \Z^{tr}_\eff(X/A).
\]
2. We have natural isomorphisms  in $\Spc_\bullet(k)$: $L_n(X)\cong \Sym^n(X)$, $\Z^{tr}_\eff(X)\cong \Sym^\infty_\bullet(X)$,
$L_n(X/A)\cong \Sym^n_\bullet(X/A)$, $\Z^{tr}_\eff(X/A)\cong \Sym^\infty_\bullet(X/A)$.
\end{lem}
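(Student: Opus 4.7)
The plan is to reduce everything to the central geometric identification $L_n(X)\cong\Sym^n(X)$ as presheaves on $\Sm/k$. With this in hand, the other unpointed and pointed isomorphisms of~(2) follow by taking filtered colimits and by matching the defining cocartesian diagrams for $L_n(X/A)$ and $\Sym^n_\bullet(X/A)$, and~(1) is then a formal consequence of these identifications and the definition of $\Z^{tr}_\eff(X/A)$ as a quotient respecting the degree filtration.

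The principal step is producing the natural bijection $\Sym^n(X)(Y)\cong L_n(X)(Y)$ for $Y\in\Sm/k$ irreducible. Given a degree-$n$ effective correspondence $\alpha=\sum_i n_iW_i$, each $W_i\subset Y\times X$ is finite surjective over $Y$ of some degree $d_i$; in characteristic zero the normalization $W_i^\nu\to Y$ is generically \'etale, and the composite $W_i^\nu\hookrightarrow Y\times X\twoheadrightarrow X$ descends, by the symmetric-functions/Galois-descent argument for finite \'etale covers of normal bases, to a canonical morphism $Y\to\Sym^{d_i}X$. Summing these morphisms with multiplicities $n_i$ yields a morphism $Y\to\Sym^n X$. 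Conversely, a morphism $\phi:Y\to\Sym^n X$ produces an effective degree-$n$ correspondence by pulling back via $\phi$ the ``universal'' effective cycle on $\Sym^n X$, viewed as the image of the finite surjective morphism $X^n\to\Sym^n X$. That these constructions are mutually inverse is the content of the classical characteristic-zero representability of families of effective $0$-cycles by symmetric products, as used in~\cite[\S6]{SuslinVoev}; the main obstacle is precisely verifying this well-definedness, which is where normality of $Y$ and characteristic zero both play an essential role.

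Granting $L_n(X)\cong\Sym^n(X)$, the pointed relative version $L_n(X/A)\cong\Sym^n_\bullet(X/A)$ follows by induction on $n$, with the base case $n=0$ trivial. Under the identifications $L_n(X)\cong\Sym^n X$ and (by induction) $L_{n-1}(X/A)\cong\Sym^{n-1}_\bullet(X/A)$, the sum map $\sigma$ in the defining square~\eqref{eqn:CoCartDefn} corresponds to $\bigvee_j\Sym^j A\times\Sym^{n-j}X\to\Sym^n X$, whose image is precisely the closed subpresheaf $\Sym^n_\bullet(X,A)$; moreover, already the single summand $\sigma_{1,n-1}:A\times\Sym^{n-1}X\to\Sym^n_\bullet(X,A)$ is an epimorphism of presheaves by the very definition of $\Sym^n_\bullet(X,A)$ as the reduced image of this map, and a direct check shows $\pi$ coincides with $\pi_n$ after factoring through $\Sym^n_\bullet(X,A)$. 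Hence the pushout defining $L_n(X/A)$ coincides with the pushout of Lemma~\ref{lem:SymInd} computing $\Sym^n_\bullet(X/A)$, completing the induction. Passing to filtered colimits in these identifications, and using that the equivalence relation defining $\Z^{tr}_\eff(X/A)$ is compatible with the degree filtration, establishes both the remaining isomorphisms of~(2) and the colimit description asserted in~(1).
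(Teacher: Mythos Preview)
Your overall strategy matches the paper's: establish $L_n(X)\cong\Sym^n(X)$ first (the paper cites \cite[proposition~3.5]{VoevMotivic} rather than \cite{SuslinVoev}, but the content is the same universal-cycle argument), then compare the two cocartesian squares~\eqref{eqn:SymInd} and~\eqref{eqn:CoCartDefn} by induction on $n$, and finally pass to colimits.

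There is, however, a genuine gap in your inductive step. You assert that the presheaf image of $\bigvee_j\Sym^jA\times\Sym^{n-j}X$ in $\Sym^nX$ is precisely the closed subpresheaf represented by $\Sym^n_\bullet(X,A)$, and you justify this by saying that already the single summand $\sigma_{1,n-1}:A\times\Sym^{n-1}X\to\Sym^n_\bullet(X,A)$ is ``an epimorphism of presheaves by the very definition of $\Sym^n_\bullet(X,A)$ as the reduced image of this map.'' This inference is wrong: a scheme-theoretic surjection, even a finite one, does not in general induce an epimorphism of representable presheaves on $\Sm/k$. A $Y$-point of $\Sym^n_\bullet(X,A)$ need not lift to a $Y$-point of $A\times\Sym^{n-1}X$, nor even Zariski-locally on $Y$. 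The paper supplies the missing argument: for $Y\in\Sm/k$ irreducible and $W\in L_n(X)(Y)$, write $W=W_A+W'$ uniquely with $W_A$ supported in $Y\times A$ and no component of $W'$ contained in $Y\times A$; then the morphism $Y\to\Sym^nX$ corresponding to $W$ factors through the closed subscheme $\Sym^n_\bullet(X,A)$ if and only if $j:=\deg(W_A/Y)\ge1$, in which case $W$ is visibly in the image of $L_j(A)(Y)\times L_{n-j}(X)(Y)$. This cycle decomposition is exactly what identifies the subpresheaf $\Sym^n_\bullet(X,A)$ with the image of the full wedge $\bigvee_{j\ge1}$, and it is also what makes your ``direct check'' that $\pi$ factors through $\pi_n$ go through, as well as the concrete content behind your claim in~(1) that the equivalence relation defining $\Z^{tr}_\eff(X/A)$ respects the degree filtration.
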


\begin{proof} We first prove (2), except for the last isomorphism. Let $\pi_n:X^n\to\Sym^n X$ be the quotient map, $\Delta_X\subset X^2$ the diagonal. Applying $\pi_n\times \id_X$ to $X^{n-1}\times\Delta_X$ gives as image the integral closed subscheme $W_n\subset (\Sym^nX)\times X$. For each morphism $f:Y\to \Sym^nX$, $Y\in \Sm/k$,  taking the pull-back cycle $(f\times\id_X)^*(W_n)$ yields an element of $L_n(X)(Y)$. By \cite[proposition 3.5]{VoevMotivic}, this defines a natural isomorphism 
\[
\Sym^n_\bullet(X)\to L_n(X)
\]
as pointed presheaves on $\Sm/k$. We have the evident isomorphism $\colim_nL_n(X)\to \Z^{tr}_\eff(X)$, which thus yields the isomorphism $\Sym^\infty_\bullet(X)\cong \Z^{tr}_\eff(X)$. 

The pointed closed subscheme $\Sym^n_\bullet(X,A)$ of $\Sym^n_\bullet(X)$ represents the union of the images of the sum maps $L_j(A)\times L_{n-j}(X)\to L_n(X)$, $j=1,\ldots, n$, via the isomorphism $\Sym^n_\bullet(X)\cong L_n(X)$. Indeed, for $Y\in \Sm/k$ irreducible, and $W$ a relative degree $n$ effective cycle on $Y\times X$,  we can write $W$ uniquely as a sum $W=W_A+W'$ with $W_A$ supported on $Y\times A$, and no component of the support of $W$ contained in $Y\times A$. If $W_A$ has degree $j$ over $Y$, then necessarily $j\le n$, and $W$ is in the image of $L_j(A)\times L_{n-j}(X)\to L_n(X)$. If $f:Y\to \Sym^nX$ is the morphism corresponding to $W$, then by considering geometric points, we see that $f(Y)\subset \Sym^n(X,A)$ if and only if $j>0$. Noting that the image of the  stabilization maps is clearly the same as the image of the monomorphism $L_{n-1}(X/A)\to L_n(X/A)$, the isomorphism $\Sym^n(X/A)\to L_n(X/A)$ follows by comparing the co-cartesian diagrams \eqref{eqn:SymInd} and \eqref{eqn:CoCartDefn} and induction.

For (1), arguing as in the last paragraph, we see that the quotient map 
\[
\Z^{tr}_\eff(X)(Y)\to \Z^{tr}_\eff(X/A)(Y)
\]
is given by the relation: $W=W_A+W'$ is equivalent to $T=T_A+T'$ if and only if $W'=T'$. This together with our description of $L_n(X/A)$ above proves (1); the last isomorphism in (2) follows from this and the isomorphisms $L_n(X/A)\cong \Sym^n_\bullet(X/A)$.
\end{proof}
\end{appendix}

\end{document}